\newcommand{\cR}{\mathcal{R}}
\newcommand{\F}{\mathbb{F}}
\newcommand{\K}{\mathbb{K}}
\newcommand{\M}{\mathbb{M}}
\newcommand{\Q}{\mathbb{Q}}
\newcommand{\Z}{\mathbb{Z}}
\newcommand{\G}{\mathbb{G}}
\newcommand{\Ad}{\operatorname{Ad}}
\DeclareMathOperator{\im}{im}
\DeclareMathOperator{\rk}{rk}
\DeclareMathOperator{\SL}{SL}
\DeclareMathOperator{\PSL}{PSL}
\DeclareMathOperator{\GL}{GL}
\DeclareMathOperator{\ad}{ad}
\DeclareMathOperator{\Aut}{Aut}
\DeclareMathOperator{\End}{End}
\DeclareMathOperator{\Id}{Id}
\DeclareMathOperator{\Nat}{Nat}
\DeclareMathOperator{\Sym}{Sym}
\newtheorem*{theorem*}{Theorem}
\newtheorem*{fact*}{Fact}
\newtheorem*{conjecture*}{Conjecture}
\newtheorem{theorem}{Theorem}[section]
\newtheorem{proposition}[theorem]{Proposition}
\newtheorem{lemma}[theorem]{Lemma}
\newtheorem{corollary}[theorem]{Corollary}
\newtheorem{definition}[theorem]{Definition}
\newtheorem*{observation*}{Observation}
\newtheorem*{definition*}{Definition}
\newtheorem*{lemma*}{Lemma}
\newtheorem{fact}{Fact}
\newtheorem*{notationinproof*}{Notation}
\newenvironment{proofclaim}{\begin{proof}}{\end{proof}}
\theoremstyle{definition}
\newtheorem*{remark*}{Remark}
\newtheorem*{remarks*}{Remarks}
\newtheorem*{notation*}{Notation}
\newtheorem{example}[theorem]{Example}
\newtheorem{observation}{Observation}
\newtheorem{notation}[theorem]{Notation}
\newtheorem{step}{--- Claim}[subsubsection]
\newtheorem{notationinproof}[step]{--- Notation}
\theoremstyle{remark}
\newtheorem*{localnotation*}{Notation}
\title{For a study of definable representations of algebraic groups}
\author{Adrien Deloro}
\begin{document}

\renewcommand{\o}{^{\circ}}
\renewcommand{\>}{\rangle}
\renewcommand{\L}{\mathbb{L}}
\renewcommand{\d}{\partial}

\maketitle
\renewcommand{\thefootnote}{}
\footnote{MSC 2010: 20F11. Keywords: Groups of finite Morley rank; Modules of finite Morley rank; Definable representations}



\begin{abstract}
We classify irreducible $\SL_2(\K)$-modules of low Morley rank ($\leq 4 \rk \K$) as a first step towards a more general conjecture.
\end{abstract}
%

\section{Introduction}\label{S:Introduction}

It was more than fifty years since Robert Steinberg published his celebrated ``tensor product'' theorem.

\begin{fact*}[Steinberg's Tensor Product Theorem, {\cite{SRepresentations}}]
Let $G$ be a semisimple algebraic group of characteristic $p \neq 0$ and rank $\ell$, and let $\cR$ denote the set of $p^\ell$ irreducible rational projective representations of $G$ in which the high weight $\lambda$ satisfies $0 \leq \lambda(a) \leq (p-1)$ for all simple roots $a$. Let $\alpha_i$ denote the automorphism $t\mapsto t^{p^i}$ of the universal field as well as the corresponding automorphism of $G$, and for $R \in \cR$ let $R^{\alpha_i}$ denote the composition of $\alpha_i$ and $R$. Then every irreducible rational projective representation of $G$ can be written uniquely as $\bigotimes R_i^{\alpha_i}$. Conversely, every such product yields an irreducible rational projective representation of $G$.
\end{fact*}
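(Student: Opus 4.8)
The plan is to reduce to honest (linear) rational representations of the simply connected cover $\tilde G$ of $G$ and then to run the standard Frobenius-kernel induction. First one checks that a rational projective representation $G\to\PGL(V)$, pulled back along $\tilde G\to G$, lifts to a linear rational representation $\tilde G\to\GL(V)$ (because $\tilde G$ is simply connected), while conversely any linear rational representation of $\tilde G$ composed with $\GL(V)\to\PGL(V)$ kills the finite central kernel of $\tilde G\to G$ (it acts by scalars on an irreducible, hence trivially in $\PGL(V)$), so descends to a projective representation of $G$. Thus one may assume $G$ is simply connected and consider only linear rational representations; by the classification via highest weights these are, up to isomorphism, the simple modules $L(\lambda)$ for $\lambda\in X(T)^+$, and the restricted dominant weights $X_1(T)=\{\lambda : 0\le\langle\lambda,\alpha^\vee\rangle\le p-1 \text{ for every simple root } \alpha\}$ --- of which there are exactly $p^\ell$ --- give precisely $\cR=\{L(\lambda):\lambda\in X_1(T)\}$.

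\textbf{Reduction to one step.} Writing each coordinate $\langle\lambda,\alpha^\vee\rangle$ in base $p$ yields a unique expansion $\lambda=\sum_{i\ge 0}p^i\lambda_i$ with $\lambda_i\in X_1(T)$, almost all zero. Let $F$ be the Frobenius morphism of $G$ and $V^{[i]}$ the twist of a $G$-module $V$ by $F^i$: if $V$ has highest weight $\mu$ then $V^{[i]}$ has highest weight $p^i\mu$, and the first Frobenius kernel $G_1=\ker F$ acts trivially on every $V^{[j]}$ with $j\ge 1$. It then suffices to prove the single statement: if $\lambda=\lambda_0+p\mu$ with $\lambda_0\in X_1(T)$ and $\mu\in X(T)^+$, then $L(\lambda_0)\otimes L(\mu)^{[1]}\cong L(\lambda)$. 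Iterating this along the $p$-adic expansion gives $L(\lambda)\cong\bigotimes_i L(\lambda_i)^{[i]}$, and uniqueness of the factorisation is automatic since the highest weight of $\bigotimes_i L(\mu_i)^{[i]}$ is $\sum_i p^i\mu_i$, whose base-$p$ digits recover the $\mu_i$.

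\textbf{The one step.} Set $M=L(\lambda_0)\otimes L(\mu)^{[1]}$ and restrict to $G_1$. There $L(\mu)^{[1]}$ is trivial, while $L(\lambda_0)$ \emph{stays simple} --- this is the crucial input (Curtis): a restricted simple $G$-module remains simple over the first Frobenius kernel, equivalently over the restricted enveloping algebra $u(\mathfrak{g})$. Hence $M|_{G_1}\cong L(\lambda_0)^{\oplus\dim L(\mu)}$ is $G_1$-isotypic, and the evaluation map $L(\lambda_0)\otimes\operatorname{Hom}_{G_1}(L(\lambda_0),M)\to M$ is an isomorphism of $G_1$-modules, hence (being $G$-equivariant) of $G$-modules. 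Now $\operatorname{Hom}_{G_1}(L(\lambda_0),M)$ is a $G$-module on which $G_1$ acts trivially, and by Schur's lemma and the tensor identity it is $G$-isomorphic to $L(\mu)^{[1]}$, which is simple. Given a nonzero $G$-submodule $W\le M$, the restriction $W|_{G_1}$ contains a copy of the simple module $L(\lambda_0)$, so $\operatorname{Hom}_{G_1}(L(\lambda_0),W)$ is a nonzero $G$-submodule of $\operatorname{Hom}_{G_1}(L(\lambda_0),M)$, hence is everything; counting dimensions through the evaluation isomorphism, $\dim W=\dim L(\lambda_0)\cdot\dim\operatorname{Hom}_{G_1}(L(\lambda_0),M)=\dim M$, so $W=M$. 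Thus $M$ is simple, and since every weight of $M$ is $\le\lambda_0+p\mu$ while the tensor of the two highest-weight vectors realises the weight $\lambda_0+p\mu=\lambda$, we get $M\cong L(\lambda)$. The converse --- that $\bigotimes_i L(\lambda_i)^{[i]}$ is simple for all $\lambda_i\in X_1(T)$ --- follows by the same argument, peeling off one Frobenius twist at a time.

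\textbf{Main obstacle.} The hard part is the cited input that restricted simple $G$-modules remain simple on $G_1$ (equivalently, irreducibility of restricted highest-weight modules over $u(\mathfrak{g})$); granting this, the rest is formal bookkeeping with Frobenius twists and Schur's lemma. Steinberg's original proof avoids Frobenius kernels entirely: it settles the rank-one case $\SL_2$ directly and propagates it via the Bruhat decomposition and the Chevalley commutator relations --- more elementary conceptually, but computationally much heavier.
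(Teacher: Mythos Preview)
The paper does not prove this statement: it is quoted as a \emph{Fact} with a citation to Steinberg's original paper, purely as historical and motivational background. So there is no ``paper's own proof'' to compare against.

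That said, your argument is correct and is precisely the modern Frobenius-kernel proof due to Cline--Parshall--Scott, which the paper itself alludes to in \S\ref{s:conjecture} (``the more modern approach through Frobenius kernels as schemes \cite{CPSTensor}''). The reduction to the simply connected cover is clean; the one-step induction via $G_1$-isotypicity and the evaluation isomorphism $L(\lambda_0)\otimes\operatorname{Hom}_{G_1}(L(\lambda_0),M)\simeq M$ is the heart of the CPS argument, and you have reproduced it faithfully. The only genuinely hard input, as you correctly flag, is the Curtis--Steinberg fact that restricted simples stay simple over $G_1$ (equivalently over the restricted enveloping algebra); everything downstream is indeed bookkeeping.

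Your closing comparison with Steinberg's original route is also accurate: Steinberg worked without scheme-theoretic kernels, handling $\SL_2$ by hand and propagating through the Chevalley relations. The paper mentions both approaches but proves neither, since the result is used only as an archetype for the model-theoretic conjecture that follows.
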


As conjectured by Steinberg, the rationality assumption was later removed by Borel and Tits following their work on ``abstract homomorphisms'' between groups of $\K$-points \cite{BTHomomorphismes}. This of course requires consideration of all field automorphisms and not only iterates of the Frobenius map in the above theorem, but the set $\cR$ remains unchanged \cite[Corollaire 10.4]{BTHomomorphismes}.

We hope the reader to be familiar with the significance of Steinberg's tensor theorem. For we wish in the present paper to conjecture an analogue in a different setting; such an analogue would be the first stone of a bridge linking representation theory to model theory.
As a matter of fact, we suggest to study the category of \emph{definable} representations of the semisimple algebraic groups --- and since our expected reader is not an expert in mathematical logic, (s)he will find all necessary explanations after the statements below.

\begin{conjecture*}
Let $(G, V)$ be a module of finite Morley rank, where $G$ is isomorphic to the group $\G_\K$ of $\K$-points of some semisimple algebraic group $\G$. Suppose that the action is faithful and irreducible.

Then there are a definable, compatible $\K$-vector space structure on $V$ and definable field automorphisms $\varphi_i$ such that $V \simeq \otimes {}^{\varphi_i} R_i$, definably, as a $\K[G]$-module.
\end{conjecture*}

We prove a basic case of this conjecture (the precise statement is in \S\ref{s:theorem}).

\begin{theorem*}
Work in the setting of the conjecture. Suppose that $\K$ does not have characteristic $2$ or $3$, that $G \simeq \SL_2(\K)$ and that $\rk V \leq 4 \rk \K$, where $\rk$ denotes the Morley rank.
Then the desired conclusion holds.
\end{theorem*}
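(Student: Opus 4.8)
We have a faithful irreducible module $(G,V)$ of finite Morley rank where $G \cong \SL_2(\K)$, $\mathrm{char}\, \K \neq 2,3$, and $\mathrm{rk}\, V \leq 4\,\mathrm{rk}\, \K$. We want to show $V$ has a definable compatible $\K$-structure and is isomorphic to one of the standard irreducibles $R_0$, $R_1$ (natural), $R_2$ ($\mathrm{Sym}^2$, projective), $R_3$ ($\mathrm{Sym}^3$) — or a twist $^\varphi R_i$ — so that $\mathrm{rk}\, V \in \{\mathrm{rk}\,\K, 2\mathrm{rk}\,\K, 3\mathrm{rk}\,\K, 4\mathrm{rk}\,\K\}$ and the conjectural tensor form holds. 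Let me think about how I'd organize this.

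**Step 1: Unipotent and torus analysis via definable linear algebra.** Fix the standard unipotent subgroups $U^+, U^-$ and the torus $T$ of $\SL_2(\K)$. I'd first study the action of a single unipotent element $u \in U^+$ on $V$. The key structural fact: $u$ acts as a definable unipotent automorphism, so $(u - 1)$ is "nilpotent" in a suitable definable sense, and $V$ decomposes under the pair $(U^+, T)$. The torus $T \cong \K^\times$ acts definably, and one expects weight-space-like decomposition. The first obstacle is that $V$ is a priori just an abelian group of finite Morley rank with a definable action — there is no ambient field. So I would work to *produce* the field action: the plan is to show that $C_V(U^+)$, the fixed points, is nontrivial (by a rank/fixed-point argument — a connected unipotent group acting on a module of finite Morley rank has fixed points, cf. the analogue of the Lie–Kolchin / Borel fixed point philosophy in finite Morley rank), and similarly for $U^-$. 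Then analyze how $T$ acts on $C_V(U^+)$ and on the "layers" $\ker(u-1)^k / \ker(u-1)^{k-1}$.

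**Step 2: Extracting the $\K$-vector space structure.** This is the heart. On the highest-weight space $W = C_V(U^+)$, the torus $T \cong \K^\times$ acts definably; I'd want to show this action is "by a power of the natural character," i.e. there is a definable field automorphism $\varphi$ with $T$ acting on $W$ as $t \mapsto \varphi(t)^m$ for some $m$, and that this endows $W$ (hence, by propagating via $U^-$, all of $V$) with a definable $\K$-vector space structure. Here one uses the Morley rank bound crucially: $\mathrm{rk}\, W$ is bounded, the orbit structure of $\K^\times$ on $W$ forces $W$ to be (a twist of) a $1$-dimensional $\K$-space when the total rank is $\leq 4\,\mathrm{rk}\,\K$. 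I expect the rank inequality $\mathrm{rk}\, V \geq (\text{number of nontrivial layers}+1)\cdot \mathrm{rk}\,\K$ or similar, combined with faithfulness (so $V \neq C_V(G)$, ruling out $R_0$ as the *only* constituent and pinning $m \geq 1$). The exclusion of characteristics $2,3$ enters to keep the standard modules $R_0,\dots,R_3$ genuinely irreducible and pairwise non-isomorphic, and to allow dividing by small integers when reconstructing the $\mathfrak{sl}_2$-triple relations $[e,f]=h$, $[h,e]=2e$, etc., at the level of the module.

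**Step 3: Identifying the module.** Once $V$ is a definable $\K[\SL_2(\K)]$-module with $\dim_\K V \leq 4$, the classification of $\K$-rational irreducible $\SL_2$-modules of dimension $\leq 4$ (which are exactly $R_0, R_1, R_2, R_3$ in characteristic $0$ or $p \geq 5$, possibly with a Frobenius/abstract-field twist) finishes the job, via the Borel–Tits abstract homomorphism theorem quoted in the introduction — the twist $^{\varphi_i}$ is forced to be a single automorphism here since the weight is $\leq 3 < p$, so Steinberg's tensor decomposition is trivial (one factor).

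**Main obstacle.** The genuine difficulty is Step 2: passing from "abelian group of finite Morley rank with definable $\SL_2(\K)$-action" to "definable $\K$-vector space with compatible $\SL_2(\K)$-action," i.e. recovering the field action on $V$ itself rather than on $G$. The tools I would reach for are: Zilber's field theorem / the structure of bad fields-free contexts, the behaviour of the definable hull of $\{$homotheties$\}$, and a careful count showing the torus must act with "few" distinct weights because $\mathrm{rk}\, V$ is small. Every sub-case ($\mathrm{rk}\, V = k\,\mathrm{rk}\,\K$ for $k=1,2,3,4$) will need its own fixed-point and rank bookkeeping, and the rank $4\,\mathrm{rk}\,\K$ case — which could a priori be $R_3$, or $R_1 \oplus R_1$ (excluded by irreducibility), or a genuinely new "non-algebraic" module that must be ruled out — is where the argument is most delicate and where the hypothesis $\mathrm{rk}\, V \leq 4\,\mathrm{rk}\,\K$ is sharp.
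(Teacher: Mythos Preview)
Your proposal has a fundamental gap: you have missed one of the actual outcomes. In rank $4k$ (with $k = \rk \K$) the module need not be $\Sym^3 \Nat G$; it can also be $\Nat G \otimes {}^\chi(\Nat G)$ for a nontrivial definable field automorphism $\chi$. This is exactly the case where the Steinberg tensor decomposition is \emph{not} trivial, and it is the main point of the theorem. Your Step~3 claim that ``the weight is $\leq 3 < p$, so Steinberg's tensor decomposition is trivial (one factor)'' is false: on the highest-weight line of $\Nat G \otimes {}^\chi(\Nat G)$ the torus acts by $t_\lambda \mapsto \lambda\,\chi(\lambda)$, which is not $\lambda^m$ for any integer $m$. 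The error feeds back into Step~2: your plan to show that $T$ acts on $C_V(U^+)$ ``as $t \mapsto \varphi(t)^m$'' cannot succeed in this configuration, so one cannot read a $\K$-linear structure off the torus action on the top line, and the appeal to Borel--Tits at the end has nothing to bite on.

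The paper's route is accordingly quite different from your sketch. It first bounds the $U$-length by $4$ via a structural analysis of $T$-minimal submodules (``non-trivial $T$-lines'') built on a theorem about $T$-covariant bi-additive maps between definable fields. It then branches on the $U$-length. Length $4$ yields $\Sym^3 \Nat G$ after computations in the style of quartic $\SL_2(\Z)$-modules. Length $\leq 3$ with $\rk V > 3k$ is the twisted tensor case: one establishes a decomposition $V = Z_1 \oplus L \oplus w\cdot L \oplus w\cdot Z_1$ into four $T$-lines, extracts a bijection $\beta: Z_1 \to L$ from the relation $(uw)^3 = 1$, \emph{defines} scalar multiplication on $L$ by $\lambda \cdot b := \beta\,\d_\lambda b$, proves multiplicativity of $\lambda \mapsto \beta\,\d_\lambda$ by a lengthy computation, and only at the very end recovers $\chi$ from the identity $\d_{\chi(\lambda)} b_0 + \d_\lambda w b_0 = 0$. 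Your outline neither anticipates this case nor its mechanism.
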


Partial results had already been obtained: identification of the natural action when $\rk V \leq 2\rk \K$ in \cite{DActions} (with an application to identification of the standard action of $\GL_n(\K)$ \cite{BBGroups}), and identification of the adjoint action when $\rk V \leq 3\rk \K$ in \cite{CDSmall}. One easily imagines that $3 \rk \K = \rk \SL_2(\K)$ is a critical value past which things become much more difficult. Not only does our proof begin to overcome such difficulties, it also provides much quicker arguments for \cite{DActions, CDSmall} (see \S\ref{s:methods}).

And now let us explain what we mean.

\subsection{Model Theory: Ranked Universes}

Model theory 
is concerned with mathematical structures in both a wide (general) and narrow (the restriction to ``first-order'') sense. But we do not need a general introduction as we shall focus on groups. An excellent reference describing our setting is \cite{BNGroups}.

For the model theorist, algebraic groups as functors hardly exist: only affine groups of $\K$-points can be examined, one by one. Such objects are equipped with a collection of finitary relations --- among which the equality and the group law of course, but there could be more. This we call a ``group structure''.

Now on a general group structure $G$ model theory studies the class of so-called \emph{definable} sets \cite[\S4.1.1]{BNGroups}: singletons, Cartesian powers of $G$, and the graphs of relations are definable, the class is closed under boolean combinations, under projections, and under quotients by definable equivalence relations.
This reflects the logician's will to work only with ``first-order'' formulas (an approach we shall not need here). This also loosely mimicks the behaviour of the constructible class in affine algebraic groups. Of course the geometry of the constructible class in an affine algebraic group over an algebraically closed field is well-understood, whereas for a general, abstract group structure, nothing similar should be expected.

One ingredient is obviously missing to bring our setting closer to affine algebraic group theory: the Zariski topology. But this is not in the scope of model theory, and we shall rely on no topological methods. The loss is serious as geometers imagine. Instead we merely use a coarse analogue of the Zariski dimension, known as the \emph{Morley rank}.

The Morley rank, in general an ordinal-valued function, is defined by a standard induction:
\begin{itemize}
\item
a non-empty, finite definable set has rank $0$;
\item
a definable set $A$ has rank $\geq \alpha +1$ iff there are infinitely many disjoint definable subsets $B_i \subseteq A$ with $\rk B_i \geq \alpha$;
\item
to deal with limit ordinals take suprema.
\end{itemize}

Now it so happens (at least in the case of a group structure; other mathematical objects are not all so well-behaved model-theoretically) that if $G$ is a group structure the Morley rank of which is finite, then it satisfies the following properties \cite[\S4.1.2]{BNGroups}, where $f: A\twoheadrightarrow B$ is any definable surjection between definable sets:
\begin{itemize}
\item
the rank is definable: for any $k$, the set $\{b \in B: \rk f^{-1}(b) = k\}$ is definable;
\item
the rank is additive: if the latter set equals $A$ then $\rk A = \rk B + k$;
\item
uniformity phenomena (called ``elimination of infinite quantifiers'' by logicians): there is an integer $m$ such that for any $b \in B$, the fiber $f^{-1}(b)$ either is infinite or of cardinality $\leq m$.
\end{itemize}

All standard dimension computations \cite[\S4.2]{BNGroups} are therefore allowed, though we fall short of topological methods.
As finiteness of the Morley rank is inherited by definable subsets, the phrases ``let $G$ be a group of finite Morley rank'' and ``let $G$ be a group definable in a finite Morley rank universe'' describe the same setting.

\subsection{Ranked Groups}

A ranked group structure enjoys a number of properties highly reminiscent of algebraic group theory:
\begin{itemize}
\item
the existence of a connected component \cite[\S5.2]{BNGroups} $H\o$ for each definable subgroup $H \leq G$;
\item
generation by so-called \emph{indecomposable} definable sets, similar to generation by irreducible subsets in algebraic groups \cite[Theorem 5.26, known as Zilber's Indecomposibility Theorem]{BNGroups} (indecomposibility is a clever way to imitate topological irreducibility without a topology: a definable set is \emph{indecomposable} if no definable subgroup can partition it using a \emph{finite} number $k > 1$ of translates);
\item
the ability to retrieve fields from definable, connected, infinite, soluble, non-nilpotent groups, as in the following fundamental fact.
\end{itemize}

\begin{fact}[{Zilber's Field Theorem: \cite[Theorem 9.1]{BNGroups}}]
Let $G = A\rtimes H$ be a group of finite Morley rank where $A$ and $H$ are infinite definable abelian subgroups. Suppose the action is faithful and $H$-irreducible, viz. $A$ has no definable, proper, infinite, $H$-invariant subgroup.
Then the ring of endomorphisms of $A$ generated by $H$ is a definable field $\K$, for which $A \simeq \K_+$ definably and $H \hookrightarrow \K^\times$ in its action on $\K_+$.
\end{fact}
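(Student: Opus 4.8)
The plan is to build $\K$ as the subring $R\subseteq\End(A)$ generated by the image of $H$ in the endomorphism ring of the abelian group $A$. Since $H$ is an \emph{abelian group}, $R$ is nothing but the $\Z$-span of $H$ in $\End(A)$; in particular $R$ is commutative, and every element of $R$ commutes, as an operator, with every element of $H$. Note also that $A$ is automatically connected: $A\o$ is definable, infinite and $H$-invariant, so $H$-irreducibility gives $A=A\o$.

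I would first record two elementary facts. First, if $0\neq r\in R$ then $\ker r$ and $\im r$ are definable $H$-invariant subgroups of $A$ (as $r$ commutes with the $H$-action); $\ker r$ is proper, hence finite by $H$-irreducibility, so $\im r\simeq A/\ker r$ is infinite and therefore equals $A$. Thus every nonzero element of $R$ is a surjective endomorphism of $A$; consequently $rs=0$ with $s\neq 0$ forces $r(A)=r(s(A))=0$, so $R$ is an integral domain. Second, $C_A(H\o)$ is definable and $H$-invariant, and it is proper — otherwise $H\o$, being infinite, would act trivially on $A$, contradicting faithfulness — hence finite.

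Now pick $a\in A\setminus C_A(H\o)$, which exists as $A$ is infinite; the $H$-orbit of $a$ is then infinite, since its stabiliser in $H$ cannot have finite index (else it would contain $H\o$ and $a$ would lie in $C_A(H\o)$). As $R$ is the $\Z$-span of $H$, the orbit $R\cdot a$ equals the subgroup generated by the definable set $\{h(a):h\in H\}$; in a group of finite Morley rank this subgroup is definable, it is $H$-invariant and infinite, hence $R\cdot a=A$ by $H$-irreducibility. Moreover $\{r\in R:r(a)=0\}=0$: such an $r$ commutes with all of $R$ and kills $a$, hence kills $R\cdot a=A$, hence vanishes by faithfulness. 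So $\rho_a\colon R\to A$, $r\mapsto r(a)$, is a bijection, and transporting the ring structure of $R$ along $\rho_a$ makes $A$ a commutative ring in which addition is unchanged and $x\cdot y:=r(y)$, where $r=\rho_a^{-1}(x)$. This multiplication is definable: for fixed $b$ the graph of $x\mapsto x\cdot b$ is $\{(r(a),r(b)):r\in R\}$, the subgroup of $A\times A$ generated by the definable set $\{(h(a),h(b)):h\in H\}$, and the word-length needed to generate such a subgroup is bounded uniformly in $b$ (a general feature of definable families of subsets of a group of finite Morley rank, coming from definability of rank and elimination of infinite quantifiers), so this family of graphs, hence multiplication itself, is definable. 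Finally $(A,+,\cdot)$ is a field, because multiplication by any $x\neq 0$ is, through $\rho_a$, a nonzero element of $R$ and hence a surjective endomorphism of $A$, so $x$ has an inverse. Write $\K$ for this definable field.

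It remains to read off the statement: $\rho_a$ is a ring isomorphism $R\to\K$, so $R$ is itself the field $\K$; $A=R\cdot a$ with $a$ of trivial annihilator, so $A$ is a free $R$-module on one generator, i.e. $A\simeq\K_+$ as a definable $\K$-vector space — concretely, the $R$-action on $A$ is multiplication inside $\K$ on the underlying set $A$ — and $H\subseteq R^\times=\K^\times$ by faithfulness, acting on $\K_+$ by multiplication. I expect the one genuinely substantial step to be the definability of this multiplication, i.e. that the subgroups $\langle\{(h(a),h(b)):h\in H\}\rangle$ form a definable family as $b$ varies: this is the finite-Morley-rank counterpart of the fact that a subgroup generated by a constructible set in an algebraic group is algebraic, and it rests on Zilber's indecomposability theorem together with the uniformity built into the rank machinery. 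The remaining ingredients — integrality, the field axioms, and the one-dimensionality of $A$ — are then bookkeeping.
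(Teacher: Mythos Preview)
The paper does not give a proof of this statement; it is cited as Fact~A from \cite[Theorem~9.1]{BNGroups} and used as a black box throughout. So there is no in-paper argument to compare against.

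Your sketch is the standard proof and is sound in outline. The one step that deserves more care is the assertion that $\langle H\cdot a\rangle$ is definable ``in a group of finite Morley rank'': the subgroup generated by an arbitrary definable set need not be definable in this setting (think of $\langle 1\rangle=\Z$ inside $(\C,+)$). You rightly point to Zilber's indecomposability theorem, but you invoke it only later, for the uniform definability of multiplication; it is in fact the same issue appearing earlier, since $R\cdot a$ is just the first-coordinate projection of the graph $\Gamma_a=\{(r(a),r(a)):r\in R\}$. Once one checks that the relevant generating sets (orbits of $H\o$, suitably translated to contain $0$) are indecomposable --- which does hold, though the verification is not entirely mechanical --- your argument runs cleanly: $\rho_a$ is a bijection, the transported product makes $A$ a field (your surjectivity-gives-invertibility step is correct because the product is commutative), and the uniform word-length bound you describe yields definability of the multiplication. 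This is essentially the proof in \cite{BNGroups}.
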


Now an infinite field structure of finite Morley rank is necessarily algebraically closed by a theorem of Macintyre \cite[Theorem 8.1]{BNGroups} (note for the geometer: $\K$ may have Morley rank $>1$). In particular there are no extensions of infinite definable fields in our setting, and consequently infinite fields of finite Morley rank have no definable groups of automorphisms \cite[Theorem 8.3]{BNGroups} (note: definable field automorphisms however exist, but only in characteristic $p$ \cite[p.~129]{BNGroups}: it is not known whether they are all relative powers of the Frobenius map, which will make our conjecture interesting).
But model theorists have been extremely plafyful with subgroups of $\K_+$, resp. $\K^\times$, which are not to be thought as ``minimal'' with respect to definability. Some of our arguments will look quite convoluted to the geometer, precisely because of this.

\emph{Quite remarkably, we need virtually no other reprequisites than the above material} --- which we take as a strong indication that we are at the beginning of something not only new, but also of interest.

\subsection{Ranked Modules}

A ranked module $(G, V, \cdot)$ (or simply $(G, V)$ when there is no ambiguity) is a semi-direct product $G \ltimes V$ such that:
\begin{itemize}
\item
$G \ltimes V$ has finite Morley rank;
\item
$G$ and $V$ are definable and infinite;
\item
and $V$ is abelian and connected.
\end{itemize}

Be very careful that it is a requirement on $V$ to be both definable and connected. All related notions are therefore connected versions: an irreducible module, a composition series, and so on. But in practice working in the definable-connected category is completely intuitive.

The classification of ranked modules could be next to impossible.
As a matter of fact, despite highly regarded efforts towards a well-known classification conjecture for the infinite simple groups of finite Morley rank, such abstract groups are not determined. But in the present paper we handle a very special case, where $G$ is known to be isomorphic with $\SL_2(\K)$. In particular, our work is almost disjoint in spirit from the ambitious classification programme we just mentioned, and definitely not about the so-called Cherlin-Zilber algebraicity conjecture.

Now if $\SL_2(\K)$ is definable inside a group of finite Morley rank, then so is $\K$ (the reader not familiar with definability should exercise here as it involves all techniques we shall use in the article); by the previous subsection, it is algebraically closed.
Hence, if $(G, V)$ is a (faithful and irreducible) ranked module with $G$ of the form $\SL_2(\K)$, or more generally $\G_\K$ for $\G$ a semisimple algebraic group, then $\K$ is an algebraically closed field. In such an ideal world, how can $V$ fail to be a representation of the algebraic group $G$?

\subsection{On the Conjecture}\label{s:conjecture}

Let us repeat our conjecture.

\begin{conjecture*}
Let $(G, V)$ be a module of finite Morley rank, where $G$ is isomorphic to the group $\G_\K$ of $\K$-points of some semisimple algebraic group $\G$. Suppose that the action is faithful and irreducible.

Then there are a definable, compatible $\K$-vector space structure on $V$ and definable field automorphisms $\varphi_i$ such that $V \simeq \otimes {}^{\varphi_i} R_i$, definably, as a $\K[G]$-module.
\end{conjecture*}

\paragraph{1. Null Characteristic.}

Like Steinberg's original theorem, the conjecture degenerates in characteristic $0$, a case where it is already known.

\begin{fact}[{\cite[Lemma 1.4]{CDSmall}}]\label{f:char0}
In a universe of finite Morley rank, consider the following definable objects: a field $\K$, a quasi-simple algebraic group over $\K$, a torsion-free abelian group $V$, and a faithful action of $G$ on $V$ for which $V$ is $G$-irreducible. Then $V \rtimes G$ is algebraic with respect to some $\K$-vector space structure on $V$.
\end{fact}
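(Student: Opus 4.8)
The plan is to reduce the statement to producing a definable $\K$-vector space structure on $V$ respected by $G$, and then to quote rigidity of representations over algebraically closed fields. First I would dispatch the preliminaries: by Macintyre's theorem $\K$ is algebraically closed, and (since the conclusion forces $V$ to be a $\K$-vector space) we are effectively in characteristic $0$ --- if one does not wish to assume this outright, a short argument rules out characteristic $p$, a unipotent root subgroup being of exponent $p$ and hence incompatible with a faithful $G$-irreducible action on a torsion-free group. As $V$ is connected abelian of finite Morley rank it is divisible, so being torsion-free it is a $\Q$-vector space; thus $G$ already acts by $\Q$-linear automorphisms and only the scalars from $\K$ are missing.

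Next I would extract a $\K$-structure from a maximal torus $T \cong (\K^\times)^{r}$ of $G$. The sublemma I would aim for is that a definable action of an algebraic torus over $\K$ on a torsion-free connected abelian group of finite Morley rank in characteristic $0$ is diagonalizable: it decomposes the module as a finite direct sum $V = \bigoplus_{\lambda} V_\lambda$ of definable $T$-submodules, each carrying a definable $\K$-vector space structure on which $T$ acts through a definable character $\lambda : T \to \K^\times$. The engine is Zilber's Field Theorem, applied to suitable $T$-sections, which supplies a definable field of endomorphisms on each weight component; the substantive point is to identify that field with $\K$ itself, using that infinite definable fields of finite Morley rank have no proper definable extensions and that definable homomorphisms to $\K^\times$ in characteristic $0$ are strongly constrained. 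This produces a definable $\K$-structure $\ast$ on $V$ with $T \subseteq \GL_{\K}(V)$.

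Then I would verify that the root subgroups respect $\ast$. Recall $G = \langle U_\alpha \rangle$ is generated by its root subgroups $U_\alpha \cong \K_{+}$, with the conjugacy relations $s\,U_\alpha(t)\,s^{-1} = U_\alpha(\alpha(s)\,t)$ for $s \in T$. A second sublemma --- a definable action of $\K_{+}$ on a torsion-free connected abelian group of finite Morley rank in characteristic $0$ has the form $t \mapsto \exp(t\,\nu)$ for a unique definable nilpotent endomorphism $\nu$ --- gives $U_\alpha(t) = \exp(t\,\nu_\alpha)$ with $\nu_\alpha$ nilpotent. The conjugacy relation then forces $\nu_\alpha$ to be an eigenvector, with eigenvalue the root $\alpha$, for the conjugation action of $T$ on $\End_\Q(V)$; hence $\nu_\alpha$ maps $V_\lambda$ into $V_{\lambda+\alpha}$ and, after checking that the $\K$-scalar actions on these two weight spaces are matched by $\nu_\alpha$, it is $\K$-linear for $\ast$. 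Therefore each $U_\alpha(t) = \exp(t\,\nu_\alpha) \in \GL_{\K}(V)$, so $G$ acts $\K$-linearly and $V$ is a definable $\K[G]$-module.

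Finally I would conclude by rigidity: $G$ and $\GL_{\K}(V)$ are both groups of $\K$-points of algebraic groups over the algebraically closed field $\K$ of characteristic $0$, and the representation $G \to \GL_{\K}(V)$ is a definable homomorphism, hence a morphism of algebraic groups (in characteristic $0$ there is no nontrivial definable field automorphism, so no Frobenius twist can intervene); thus $V \rtimes G \hookrightarrow V \rtimes \GL_{\K}(V)$ is algebraic with respect to $\ast$. I expect the genuine obstacle to lie inside the torus sublemma: showing that the endomorphism field furnished by Zilber's Field Theorem really is $\K$ --- and not some other algebraically closed field, nor a pathological object assembled from exotic definable subgroups of $\K^\times$ --- and that the $\K$-structures on the various weight spaces glue, consistently with the root subgroups, into a single $G$-equivariant structure. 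Torsion-freeness together with characteristic $0$ are precisely what should make this taming possible.
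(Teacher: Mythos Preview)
The paper does not prove this statement: it is quoted as a fact from \cite{CDSmall} and used as a black box, so there is no proof in the paper to compare against. What follows is therefore an assessment of your sketch on its own merits.

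Your outline is plausible in structure --- linearise via a maximal torus, then check the root subgroups --- and you correctly flag the torus sublemma (identifying the Zilber field with $\K$ and gluing the weight-space structures) as the crux. There is however a circularity risk in your treatment of the root subgroups. You write $U_\alpha(t) = \exp(t\,\nu_\alpha)$ for a nilpotent endomorphism $\nu_\alpha$, but the expression $t\,\nu_\alpha$ already presupposes a $\K$-scalar action on $\End(V)$, or equivalently on $V$, which is precisely what you are trying to establish for $U_\alpha$. One can unwind this by first defining $\nu_\alpha$ purely additively (say as $U_\alpha(1)-1$ on each weight space, after proving unipotence) and then arguing $\K$-linearity of $\nu_\alpha : V_\lambda \to V_{\lambda+\alpha}$ from $T$-covariance, much as in Theorem~\ref{t:threefields} of the paper; but this step needs to be made explicit rather than folded into an exponential ansatz.

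A second caution: your sublemma that $\K_+$ acts unipotently on a torsion-free $V$ is not free in characteristic $0$. The paper itself remarks (after Fact~\ref{f:UVnilpotent}) that nilpotence of $U\ltimes V$ ``could fail in characteristic $0$''; so you cannot simply import the positive-characteristic mechanism. You would need an independent reason --- for instance, exploiting that $U_\alpha$ is normalised by $T$ and that the $T$-weight decomposition has only finitely many pieces --- to force each $U_\alpha$ to act unipotently before any exponential formula makes sense.
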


One should note that this is a weak analogue of the equivalent statement in the so-called $o$-minimal category, where prior knowledge of $G$ is not even required.

\begin{fact}[{not used; Macpherson-Mosley-Tent; \cite[Proposition 4.1]{MMTPermutation}}]
Let $(G, V)$ be a module definable in an $o$-minimal structure. Suppose that the action is faithful and irreducible.
Then there is a definable real closed field $R$ such that $V$ has definably the structure of a vector space over $R$, and $G$ is definably isomorphic to a subgroup of $\GL(V)$ in the standard action on $V$. Furthermore, if $G$ is connected then its image in $\GL_m(R)$ is $R$-semialgebraic.
\end{fact}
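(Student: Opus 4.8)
The plan is to imitate Zilber's Field Theorem in the o-minimal category: extract a definable real closed field from the irreducible action by a Schur-type argument, then linearise. First I would form the commutant $D = \End_G(V)$ consisting of all definable, $G$-equivariant endomorphisms of $V$. A Schur-type lemma makes this a definable division ring: any nonzero $f \in D$ has definable kernel and image, each a $G$-invariant definable subgroup of $V$, so by irreducibility $f$ is an isomorphism. The reason to work with the commutant rather than the subring generated by the image of $G$ is that $G$ then acts $D$-linearly by construction, which is exactly what a later embedding into $\GL$ demands.

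The heart of the argument, and the step I expect to be the main obstacle, is to show that $D$ contains a definable real closed field $R$ over which $V$ becomes a finite-dimensional vector space. Since $V$ is definably connected abelian it is divisible (so we are morally in characteristic zero), and faithfulness together with irreducibility guarantees that $V$ is not of the degenerate, modular type in the Peterzil--Starchenko trichotomy: the action of $G$ supplies a definable family of nontrivial definable endomorphisms of $V$, so at a generic point the induced structure is of linear or of field type. In either case one produces a definable infinite field; as every infinite field definable in an o-minimal structure is real closed or algebraically closed, and an algebraically closed one has the form $R[\sqrt{-1}]$ for a definable real closed $R$, we obtain in all cases a definable real closed field $R \subseteq D$, with $D$ finite-dimensional over $R$ by o-minimal dimension. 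This is where the deep o-minimal input is indispensable; the surrounding steps are essentially formal.

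With $R$ in hand, $V$ inherits a definable $R$-vector space structure of finite dimension $m = \dim_R V$, by additivity of the o-minimal dimension. Because $R \subseteq D$ centralises the $G$-action, $G$ acts $R$-linearly, and faithfulness then yields a definable injective homomorphism $G \hookrightarrow \GL_R(V) \cong \GL_m(R)$ realising $G$ as a definable subgroup in its standard action on $V$. Finally, for the semialgebraicity clause when $G$ is definably connected, I would invoke the structure theory of groups definable in o-minimal expansions of real closed fields (Peterzil--Pillay--Starchenko): a definably connected definable subgroup of $\GL_m(R)$ is $R$-semialgebraic. This establishes the last assertion and completes the reduction.
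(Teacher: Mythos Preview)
The paper does not prove this statement at all: it is quoted as a fact from the literature (Macpherson--Mosley--Tent, \emph{Permutation groups in o-minimal structures}), explicitly flagged as ``not used'', and no argument is supplied. So there is no proof in the paper to compare your proposal against.

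As for your sketch itself, the broad strategy (produce a definable real closed field via the trichotomy, linearise, then quote Peterzil--Pillay--Starchenko for semialgebraicity) is the right shape. But there is a genuine gap at the first step: you define $D = \End_G(V)$ as the set of \emph{all} definable $G$-equivariant endomorphisms of $V$ and then treat $D$ as a definable object. It is not. Quantifying over arbitrary definable maps is not first-order, and there is no reason a priori that this commutant is uniformly parametrised by a definable family. The Schur argument shows each individual nonzero element of $D$ is invertible, but that does not make $D$ itself definable. What one needs instead is either a concrete definable subring of $\End(V)$ built from the action (e.g.\ exploiting a minimal definable $H$-invariant subgroup for some suitable definable abelian $H$, in the spirit of Zilber's Field Theorem), or to run the trichotomy argument directly on $V$ to obtain the field first and only then recover linearity of $G$. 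Your second paragraph gestures at the trichotomy, but as written it is invoked to analyse $D$, which you have not yet shown to be definable; the logic is circular at that point.
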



\paragraph{2. Linearity.}

Let us return to the ranked setting. In positive characteristic, little is known.
Needless to say, Steinberg's slightly computational proof, and the more modern approach through Frobenius kernels as schemes \cite{CPSTensor} are both completely out of our methodological scope. And we confess to have no general strategy for the moment.

Whether definable $\K$-linearity of $V$ would suffice is not completely clear: by the Borel-Tits corollary to Steinberg's theorem, \emph{all} irreducible linear representations of $\G_\K$ are known and of the desired form --- but the definability of the field automorphisms used as twists is not fully obvious albeit highly expectable.
We do however suspect that for given $G = \G_\K$ and $V \simeq \otimes_i {}^{\varphi_i} R_i$ as above, the structures $(\K; \frac{\varphi_2}{\varphi_1}, \dots, \frac{\varphi_n}{\varphi_1})$ and $(G, V, \cdot)$ should be inter-definable. It is a topic we shall return to some day.

But in any case, we see no general path to linearity, a much harder question. Our proof proceeds by defining the $\K$-vector space structure manually, that is, with some hint of what the twist will be.

\paragraph{3. Field automorphisms.}

We wish to make a few more general comments.

\begin{fact}[{not used; Macpherson-Pillay, Poizat; \cite[Th\'eor\`eme 2]{PQuelques}}]\label{f:Poizat}
Definable in a universe of finite Morley rank, let $\K$ be a field of prime characteristic and $G \leq \GL_n(\K)$ be a simple subgroup. Then there are definable field automorphisms $\varphi_1, \dots, \varphi_k$ such that $G$ is definable in the structure $(\K; +, \cdot, \varphi_1, \dots, \varphi_k)$.
\end{fact}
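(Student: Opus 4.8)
The plan is to recognise $G$ as the group of rational points of an algebraic group by the structure theory of linear groups of finite Morley rank, and then to extract the required field automorphisms from the Borel--Tits description of the abstract embedding $G \hookrightarrow \GL_n(\K)$. Preliminaries first: since $\K$ is an infinite field of finite Morley rank it is algebraically closed by Macintyre's theorem \cite[Theorem 8.1]{BNGroups}; and since $G$ is infinite and simple it coincides with its connected component $G\o$ --- a definable subgroup of finite index, hence infinite, hence all of $G$ --- and is in particular connected, nonabelian and perfect.

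First I would invoke the structure theory of definably linear simple groups of finite Morley rank in characteristic $p$ (after Mustafin and Poizat, and in the spirit of the identifications recalled in this paper): such a $G$ is in fact \emph{equal} to $\G(\K)$ for some quasi-simple algebraic group $\G$ over $\K$. It is crucial here that $G$ be genuinely simple: this is what forbids $G$ from being merely commensurable with $\G(\K)$, or from being one of the ``playful'' proper definable subgroups of $\K_+$ or $\K^\times$ that the paper warns about --- those being abelian, while a simple $G$ is neither abelian nor built from abelian pieces. The Zariski closure of $G$ inside $\GL_n$ is then a connected perfect algebraic group $H$ in which $G$ is dense.

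Now the inclusion $\iota \colon \G(\K) = G \hookrightarrow \GL_n(\K)$ is an abstract homomorphism from the group of $\K$-rational points of a quasi-simple algebraic group, so the Borel--Tits theorem applies (\cite{BTHomomorphismes}, in precisely the form used there to delete the rationality hypothesis from Steinberg's theorem): there are abstract field embeddings $\varphi_1, \dots, \varphi_k \colon \K \to \K$ and a morphism of algebraic groups $\mu$ with $\iota = \mu \circ (\varphi_1^{\ast}, \dots, \varphi_k^{\ast})$; in particular $H$ is semisimple, a product of (twisted) quasi-simple factors, one per $\varphi_i$. Two things then remain to be pinned down. Each $\varphi_i$ is \emph{definable} in the ambient universe: the tensor factorisation provided by Steinberg--Borel--Tits is canonical, so each twisted constituent, and hence the map $\varphi_i^{\ast}$ and then $\varphi_i$ itself (read off on a maximal torus), can be recovered from $\iota$ by a formula --- here one leans on the definability and additivity of Morley rank and on elimination of infinite quantifiers. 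And each $\varphi_i$ is \emph{surjective}, hence a field automorphism: its image is a definable subfield of $\K$ isomorphic to $\K$, so infinite, and by Wagner's theorem --- the surprising result underlying \cite{PQuelques} --- a field of finite Morley rank of positive characteristic has no proper infinite definable subfield.

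Granting all this, the conclusion is routine bookkeeping: $\G(\K)$ is a $(+,\cdot)$-constructible subset of an affine space, $\mu$ is $(+,\cdot)$-definable, the $\varphi_i$ are among our distinguished operations, and $G = \iota(\G(\K))$ is the image of a map definable from $+$, $\cdot$ and the $\varphi_i$; hence $G$ is definable in $(\K; +, \cdot, \varphi_1, \dots, \varphi_k)$. The step I expect to be the real obstacle is the structural input of the second paragraph --- that an abstractly simple definably linear group of finite Morley rank in characteristic $p$ is the \emph{full} group of rational points of an algebraic group, and not some Zariski-dense proper definable subgroup --- together with the bootstrapping needed to upgrade the a priori abstract field embeddings coming out of Borel--Tits into honestly definable functions of the ambient universe; once those are in hand, the remainder is classical algebraic-group theory plus the elementary calculus of Morley rank.
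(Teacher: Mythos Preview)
The paper does not prove this statement: it is quoted as an external result (Fact~\ref{f:Poizat}), attributed to Macpherson--Pillay and Poizat and cited from \cite[Th\'eor\`eme~2]{PQuelques}, and explicitly flagged as ``not used''. The surrounding text even refers to ``the proof Poizat gave'' without reproducing it. So there is no proof in the paper to compare your proposal against.

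For what it is worth, your sketch follows the expected architecture of Poizat's argument: identify $G$ with the $\K$-points of an algebraic group via the structure theory of definably linear groups, apply Borel--Tits to the abstract inclusion into $\GL_n(\K)$ to extract field embeddings, and use Wagner's theorem on definable subfields in positive characteristic to upgrade those embeddings to definable automorphisms. You are right to single out the first step (that a simple definably linear group in characteristic $p$ is the \emph{full} group of $\K$-points) and the definability of the $\varphi_i$ as the substantive content; both are genuinely nontrivial and are precisely what Poizat's paper supplies. Your handling of the definability of the $\varphi_i$ is the softest part of the sketch --- ``read off on a maximal torus'' and ``the tensor factorisation is canonical'' are gestures rather than arguments --- but since the present paper offers nothing to measure this against, a detailed critique would have to go back to \cite{PQuelques} itself.
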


However remarkable this phenomenon and the proof Poizat gave, one hardly sees how to use it in our arguments. But we take it as supporting evidence for the conjecture: given a definable representation, the relevant category seems to be the rational category augmented by a finite number of twists.

We wish to go further into mere speculation. In Steinberg's classical result one computes the $p$-adic expansion of the highest weight; what would be the similar computation here?
Observe how any two definable field automorphisms of a field of finite Morley rank commute over the algebraic closure of the prime field, hence everywhere; in particular, the ring $\F_p[\Aut_{\rm def}(\K)]$ generated by definable automorphisms of $\K$ \emph{is commutative}. So computing in this ring would make (some) sense.
This also do we take as a strong indication that our conjecture is the right statement.

And now let us be specific.

\subsection{The Result and its Legitimacy}\label{s:theorem}

\begin{theorem*}
Definable in a universe of finite Morley rank, let $\K$ be a field, $G = \SL_2(\K)$, and $V$ be an irreducible $G$-module of rank $\leq 4\rk \K$.
Suppose the characteristic of $\K$ is neither $2$ nor $3$.

Then there is a definable $\K$-vector space structure on $V$ such that $V$, as a $\K[G]$-module, $V$ is definably isomorphic to one of the following:
\begin{itemize}
\item
$\Nat G$, the natural representation;
\item
$\Ad G \simeq \Sym^2 \Nat G$, the adjoint representation;
\item
$\Sym^3 \Nat G$, the rational representation of degree $4$ on $\K[X^3, X^2Y, XY^2, Y^3]$;
\item
$\Nat G \otimes{}^\chi (\Nat G)$, a twist-and-tensor representation for some definable $\chi \in \Aut(\K)$.
\end{itemize}
\end{theorem*}

We do not know what happens in characteristics $2$ and $3$. But we suspect things would be much harder, and despite the challenge we confess that for the time being our interests lie more in higher Morley rank than in lower characteristic.

The theorem is proved in \S\ref{S:Proof}, but it requires the tools developed in \S\ref{S:Tools}.

We must legitimate our result --- which we agree is but a very first step. However we consider it a breakthrough; there are three critical steps in the analysis:
\begin{enumerate}
\item\label{i}
leave the category of rational representations of $\SL_2(\K)$;
\item\label{ii}
remove bounds on the Morley rank of the $\SL_2(\K)$-module $V$;
\item
handle the other semisimple algebraic groups.
\end{enumerate}

As we said, until now only the case $G \simeq \SL_2(\K)$ and $\rk V \leq 3\rk \K$ was known \cite{CDSmall}. \emph{Here we successfully leave the rational category}, which requires a complete change of methods.
We view completion of \ref{i} as evidence for the maturity of (some of) our methods. There are no claims now on how to pass the remaining steps.

\subsection{On our Methods}\label{s:methods}

Let us describe the main features of the argument.

\begin{itemize}
\item
As opposed to \cite{CDSmall} which handled the $\rk V \leq 3\rk \K$ case (see in particular \cite[Step 5 in \S2 and the Proposition in \S3]{CDSmall}), we are now working with modules greater in rank than the group. Consequently there can be no useful genericity argument since orbits of elements are all small when compared to the module.

The present study entirely substitutes genericity arguments with much better techniques given in \S\ref{S:Tools}. Some of these were found by Tindzogho Ntsiri in his PhD \cite{TEtude}, also \cite{TStructure}; what we cannot extend we faithfully reproduce for the sake of future reference.
\item
We convert the abstract tools into methods based on the structure of $V$ as a $T$-module (where $T$ is an algebraic torus) and explained in \S\ref{s:applied}: we shall even give a considerably shorter proof of \cite{CDSmall} in \S\ref{s:3k}.

It is not unreasonable to hope that these methods can be generalised further.
\item
The main issue with our proof is that we reproduce some of the unpleasant computations developed for $\SL_2(\Z)$-modules of short nilpotence length \cite{TV-III}.
Now one should take intrinsic limitations on the unipotence length with care: \cite[Proposition 2]{TV-III}, which we briefly touch upon in \S\ref{s:notations}, explains why computations which look possible in unipotence length $\leq 5$ have a geometric obstruction in longer length.

Although this geometric phenomenon simply cannot survive strong model-theoretic assumptions, for the moment there is no strategy other than the ``short length miracle'' of \cite[Theorem 1]{TV-III}.
In our opinion this is the main obstacle to jumping to identification of \emph{all} rational representations of $\SL_2(\K)$ among its definable modules.

But perhaps model theory, or a combination of finite group theory and model theory, could substitute such computations. This would open the way to something general and step \ref{ii} in \S\ref{s:theorem} might then prove tractable.
\item
In the twisted case we untensor by hand a linear structure we defined by hand. As we said in the previous subsection, one could naively hope to divide a solution to any case of the conjecture into two tasks:
\begin{itemize}
\item
proving linearity (in as general a context as possible);
\item
proving in general that if $V \simeq \otimes {}^{\varphi_i} R_i$ is a definable $\G_\K$-module, then the $\varphi_i/\varphi_1$ are definable from $G\ltimes V$ (see \S\ref{s:conjecture}.2).
\end{itemize}

But as we strongly suspect there is no direct path to linearity, analysing the conjecture in this two-step fashion hardly helps.

This certainly reflects on our methods for the tensor case (Proposition \ref{p:twist} of \S\ref{s:twist}): we isolate weight spaces and define the field action following what the twist should be; only after doing so do we actually retrieve the field automorphism.
\end{itemize}

\section{General Tools}\label{S:Tools}

The proof of our main Theorem will be in \S\ref{S:Proof}; this preliminary section contains tools whose interest goes beyond the rank $4k$ case. In \S\ref{s:Maschke} we state Tindzogho Ntsiri's semi-simple analogue of Maschke's Theorem. In \S\ref{s:covariant} we enhance and simplify his systematic study of covariant, bi-additive maps between definable fields. These general results are applied to $\SL_2(\K)$-modules of finite Morley rank in \S\ref{s:applied}.

Before we start, we remind the reader that by convention, all modules are definable and connected; this affects the notions of irreducibility and composition series.

\subsection{Abstract Tools 1: Actions of Tori}\label{s:Maschke}

\begin{fact}[{\cite[Lemma G (iv)]{DJInvolutive}}]\label{f:semisimple}
In a universe of finite Morley rank, let $A$ be a definable, abelian group and $R$ be a group acting on $A$ by definable automorphisms. Let $A_0 \leq A$ be a definable, $R$-invariant subgroup.

Suppose that $A$ has exponent $p$ and $R$ is a definable, soluble, $p^\perp$ group. Then $A = [A, R] \oplus C_A(R)$, $([A, R]\cap A_0) = [A_0, R]$, $C_A(R)$ covers $C_{A/A_0}(R)$, and $C_R(A) = C_R(A_0, A/A_0)$.
\end{fact}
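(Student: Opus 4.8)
The plan is to reduce all four assertions to the single statement that $A$ is a \emph{semisimple} $R$-module, i.e.\ that every definable $R$-submodule of $A$ has a definable $R$-submodule as a complement. Granting this, the four conclusions are formal. Write $A = [A,R]\oplus C_A(R)$ as the decomposition of $A$ into the sum of its nontrivial, resp.\ trivial, simple summands in some (hence any) decomposition; this is the first conclusion. If $A_0$ is a definable $R$-invariant subgroup it is itself semisimple, so $A_0 = [A_0,R]\oplus C_{A_0}(R)$; since $[A_0,R]\subseteq [A,R]\cap A_0$ and $[A,R]$ (a sum of nontrivial simples) contains no trivial submodule, $[A,R]\cap A_0$ is a submodule of $A_0$ with no trivial submodule, whence $[A,R]\cap A_0\subseteq [A_0,R]$ and equality holds. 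Picking a submodule complement $A=A_0\oplus A_1$ (so $A_1\simeq A/A_0$) gives $C_A(R) = C_{A_0}(R)\oplus C_{A_1}(R)$, whose image in $A/A_0$ is $C_{A_1}(R)\simeq C_{A/A_0}(R)$, so $C_A(R)$ covers $C_{A/A_0}(R)$; and an $r$ centralising $A_0$ and $A/A_0$ centralises $A_1\simeq A/A_0$, hence $A$, giving $C_R(A)=C_R(A_0)\cap C_R(A/A_0)$. So everything comes down to semisimplicity of the $R$-module $A$.

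To prove semisimplicity I would first reduce to $R$ connected. It is standard that a $p^\perp$ group of finite Morley rank has $[R:R\o]$ prime to $p$ (a nontrivial finite $p$-subgroup would contain an element of order $p$, and Sylow theory in finite Morley rank relates Sylow $p$-subgroups of $R$ and of $R/R\o$). Given this, averaging an $R\o$-equivariant projection over the finite group $R/R\o$ — legitimate since $[R:R\o]$ is invertible in $\F_p$ — upgrades $R\o$-semisimplicity of $A$ to $R$-semisimplicity. Hence one may assume $R$ connected, and still soluble and $p^\perp$.

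Now induct on the derived length of $R$. If $R$ is abelian we are in the base case below. Otherwise set $N = [R,R]$: it is definable, connected, $p^\perp$ and of smaller derived length, so by induction $A$ is $N$-semisimple; moreover $R/N$ is connected, abelian, and again $p^\perp$ (a $p$-element of $R/N$ would, since $N$ is $p^\perp$, lift to a $p$-element of $R$). The homogeneous components of the $N$-module $A$ form a finite family permuted by the connected group $R$, hence each is $R$-invariant, so we may assume $A$ is $N$-isotypic; then Clifford theory reduces $R$-semisimplicity of $A$ to semisimplicity of an associated module for the abelian connected $p^\perp$ group $R/N$ — i.e.\ to the base case again (the twist produced by the Clifford correspondence is immaterial, the base case being a statement about an abstract soluble $p^\perp$ group acting on an $\F_p$-module of finite Morley rank). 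Throughout one works in the definable-connected category and uses finiteness of composition length in finite Morley rank to make the bookkeeping of multiplicities legitimate.

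The base case, $R$ connected abelian $p^\perp$, is the crux. Here I would split $R = D\oplus B$ with $D$ its (definable) maximal divisible subgroup and $B$ definable of bounded exponent $m$ prime to $p$. A divisible group acts semisimply on any $\F_p$-module: an extension of a trivial module by a trivial one is classified by a homomorphism from $R$ into an $\F_p$-vector space, which vanishes as $R$ is divisible, and the general case follows by dévissage along a finite composition series using the rank hypotheses. For $B$ one shows that a connected group of bounded exponent $m$ coprime to $p$ acts semisimply on every $\F_p$-module of finite Morley rank; this is the ``coprime action'' phenomenon, but carrying out the averaging for an \emph{infinite} $B$ is precisely where the finite Morley rank machinery — definability and additivity of rank, and the decomposition of a bounded-exponent connected abelian group into its $\ell$-unipotent parts — must be brought to bear. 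Since $D$ and $B$ commute and $R = D\times B$, combining the two decompositions yields $R$-semisimplicity. I expect this base case — and within it the bounded-exponent part, together with the multiplicity bookkeeping in the Clifford step — to be the main obstacle; the rest of the argument is formal once these are settled.
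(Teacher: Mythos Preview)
The paper does not prove this statement: it is quoted as a Fact from \cite{DJInvolutive} and used as a black box, so there is no ``paper's own proof'' to compare against. What follows is an assessment of your attempt on its merits.

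Your reduction of the four conclusions to semisimplicity of $A$ as an $R$-module is correct and cleanly argued. But notice that full semisimplicity is strictly stronger than what the Fact asserts: in the paper's architecture, complementation of arbitrary submodules is a \emph{separate} and harder result, namely Fact~\ref{f:Maschke} (Tindzogho Ntsiri's Maschke-type theorem), and even that is stated only under the additional hypotheses that $R$ be connected and $C_V(R)=0$. Your route therefore absorbs a proof of Fact~\ref{f:Maschke} into that of Fact~\ref{f:semisimple}. That is not wrong, but it is a substantial detour: the four conclusions here are of the ``Fitting decomposition plus coprime cohomology vanishing'' type and admit direct proofs that never complement an arbitrary submodule.

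As for your sketch of semisimplicity itself, the outline is reasonable but two steps are genuine gaps rather than details. First, for the divisible summand $D$ you only treat extensions of a trivial module by a trivial one; your appeal to ``d\'evissage'' does not touch extensions involving a nontrivial simple $D$-module, and those are precisely where the content is. Second, the Clifford reduction in the finite Morley rank setting is not routine: definability and finiteness of the homogeneous components, and the passage to an $R/N$-module with coprimality preserved, all require argument. You flag these honestly as obstacles, and they are. If you insist on the semisimplicity route, the economical fix is: prove $A=[A,R]\oplus C_A(R)$ directly (this is the genuinely elementary part), then apply Fact~\ref{f:Maschke} to $[A,R]$ where $C_{[A,R]}(R)=0$ holds, and finally average over $R/R^\circ$; but the first step is exactly the direct argument you were trying to avoid.
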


It certainly is superfluous to explain the relevance of a definable version of Maschke's Theorem.
The result below is from Tindzogho Ntsiri's PhD thesis \cite{TEtude}.

\begin{fact}[{Tindzogho Ntsiri, \cite[\S5.2.1, Proposition 5.23]{TEtude} or \cite[Corollary 2.17]{TStructure}}]\label{f:Maschke}
In a universe of finite Morley rank, let $T$ be a definable, connected, soluble, $p^\perp$ group and $V$ be a $T$-module of exponent $p$.
Suppose $C_V(T) = 0$.
Then every $T$-submodule admits a direct complement $T$-module.
\end{fact}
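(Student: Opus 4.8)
The plan is to deduce from the hypotheses that $V$ is a \emph{semisimple} $T$-module, which is precisely the assertion that every $T$-submodule is a direct summand; I would prove this by induction on the pair $(\rk V, \rk T)$, ordered lexicographically. Two observations set up the reduction. First, by Fact~\ref{f:semisimple} applied with $R = T$, the conditions ``$V$ has exponent $p$'' and ``$C_V(T) = 0$'' are inherited by every submodule and, since $C_V(T)$ covers $C_{V/W}(T)$, by every quotient; and every non-zero $T$-module contains an irreducible submodule, namely a definable connected $T$-submodule of least Morley rank. Now, choosing such an irreducible $S \le V$, the quotient $V/S$ is semisimple by induction (smaller $\rk V$), say $V/S = \bigoplus_j \bar S_j$; taking preimages $N_j$, each proper $N_j$ is semisimple by induction, so $N_j = S \oplus S_j'$, and $V = S + \sum_j S_j'$, a sum which a rank count forces to be direct (the finite $T$-invariant intersections that might a priori occur being killed by $C_V(T) = 0$) --- so $V$ is semisimple, \emph{unless} $V/S$ was already irreducible, in which case we are reduced to splitting the extension $0 \to S \to V \to V/S \to 0$. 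Hence the whole statement comes down to:
\[ (\ast)\qquad \text{a non-split extension } 0 \to S_1 \to M \to S_2 \to 0 \text{ of irreducible } T\text{-modules with } C_M(T) = 0 \text{ cannot exist.} \]

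To prove $(\ast)$ I would first run an \emph{eigenspace device} off Fact~\ref{f:semisimple}: for any definable connected $R \trianglelefteq T$ it provides a $T$-invariant splitting $M = [M,R] \oplus C_M(R)$ (with both summands connected, being quotients of the connected group $M$). Applying it to $R = C_T(S_2)\o$ gives $[M,R] \le S_1$, hence $S_1 = [M,R] \oplus (S_1 \cap C_M(R))$; if moreover $R$ acts non-trivially on $M$, then $C_M(R)$ has strictly smaller Morley rank, so is semisimple by induction, a $T$-complement $Q$ of $S_1 \cap C_M(R)$ in $C_M(R)$ exists, and then $M = S_1 \oplus Q$ contradicts non-splitness. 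A symmetric use of $R = C_T(S_1)\o$ (where $[M,R]$, when non-zero, injects into and hence maps isomorphically onto $S_2$, and so is itself a complement), together with a reduction to the Fact for the $(T/C_T(M))$-module $M$ --- of the same rank but smaller $\rk T$ --- in the remaining case where such an $R$ acts trivially on $M$, leaves only the situation in which $T$ acts with \emph{finite kernel} on $S_1$, on $S_2$, and hence on $M$.

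In that core situation I would induct on $\rk T$ (equivalently, on the derived length of $T$). If $T$ is abelian, Zilber's Field Theorem identifies $S_i$ with $\K_{i+}$ on which $T$ acts through a character $\chi_i \colon T \to \K_i^{\times}$ of finite kernel. If some $t \in T$ is such that $\chi_1(t)$ and $\chi_2(t)$ have different minimal polynomials over the prime field, the primary decomposition of the single operator that $t$ induces on $M$ is $T$-invariant (as $T$ is abelian) and splits the extension. Otherwise $\chi_1$ and $\chi_2$ are ``twist-equivalent'', and the extension is classified by a class in $H^1(T; \operatorname{Hom}(S_2, S_1))$ with coefficients in a group of exponent $p$, which must vanish because a connected soluble $p^{\perp}$ group has no non-trivial $p$-unipotent (in particular exponent-$p$) quotient --- so the extension splits after all. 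If $T$ is not abelian, set $T' = [T,T]$, a proper definable connected subgroup, and apply Fact~\ref{f:semisimple} to obtain the $T$-invariant splitting $M = [M,T'] \oplus C_M(T')$; on $[M,T']$ the group $T'$ has no non-zero fixed point and $\rk T' < \rk T$, so $[M,T']$ is $T'$-semisimple by induction, and since the connected group $T$ permutes its finitely many $T'$-isotypic components it fixes each, which lets one read off $T$-semisimplicity, while on $C_M(T')$ the action factors through the abelian quotient $T/T'$ and the previous paragraph applies; reassembling contradicts non-splitness.

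The main obstacle is this core situation, and specifically the ``twist-equivalent'' abelian sub-case together with the $T'$-isotypic bookkeeping in the non-abelian one: these are where the $p^{\perp}$-hypothesis and finite-Morley-rank structure theory (no exponent-$p$ quotients of connected soluble $p^{\perp}$ groups; control of the isotypic components of a definable torus action) genuinely enter, and they are the real content that \cite{TEtude} must establish and the part least amenable to compression. Everything else is bookkeeping with Fact~\ref{f:semisimple} and rank additivity.
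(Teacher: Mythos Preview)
The paper does not actually prove Fact~\ref{f:Maschke}: it is merely quoted, with a citation to Tindzogho Ntsiri's thesis \cite{TEtude} and paper \cite{TStructure}, and no argument is reproduced. So there is nothing here to compare your proposal against.

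As to the proposal itself, your reduction to the two-step extension problem $(\ast)$ is sound, and the eigenspace manoeuvres with $R = C_T(S_i)^\circ$ together with Fact~\ref{f:semisimple} correctly isolate the ``core'' case in which $T$ acts with finite kernel on both irreducible factors. The real gap is in your treatment of that core case. In the abelian, ``twist-equivalent'' branch you assert that the extension class in $H^1(T; \operatorname{Hom}(S_2,S_1))$ vanishes because a connected soluble $p^\perp$ group has no non-trivial exponent-$p$ quotient. That fact controls only $\operatorname{Hom}(T,A) = H^1(T;A_{\mathrm{triv}})$, i.e.\ first cohomology with \emph{trivial} coefficients; it says nothing about $H^1$ with the genuinely twisted $T$-action on $\operatorname{Hom}(S_2,S_1)$ that classifies the extension, nor is it clear what cohomology theory you have in mind for an infinite definable $T$ with values in a not-obviously-definable $\operatorname{Hom}$ group. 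This is not a detail that can be patched by rephrasing: it is exactly the point at which a real argument is needed. The non-abelian step (``lets one read off $T$-semisimplicity'' from $T'$-isotypic blocks) is similarly a slogan rather than a proof, and the passage to $T/C_T(M)$ tacitly assumes that such a quotient remains $p^\perp$, which is not free. You yourself flag these as ``the real content that \cite{TEtude} must establish''; your outline has not yet supplied it.
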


\begin{remark*}
The author wonders whether anything similar might exist in characteristic $0$. This is mostly for art's sake as the conjecture is settled there by Fact \ref{f:char0}.
\end{remark*}

\subsection{Abstract Tools 2: Covariant Bi-additive Maps}\label{s:covariant}

We move to studying $T$-covariant, bi-additive maps; the reason for doing so is the following. When analysing an arbitrary $\SL_2(\K)$-module, one will try to capture weight spaces as $T$-minimal modules; if the action of $T$ is not trivial, Zilber's Field Theorem recovers an algebraically closed field from every such module. But the interpretable field has no reason to be isomorphic to $\K$, nor even to be the same on the various $T$-minimal modules.
The hope is that linking the various weight spaces via the commutation map with a unipotent subgroup normalised by $T$ should force interpretable fields to be definably isomorphic with $\K$.

A first result in this direction was developed (entirely by Cherlin) for \cite{CDSmall} and served as a basis of Tindzogho Ntsiri's work.

In a sense, the material in this section is again from \cite{TEtude}; but as opposed to Fact \ref{f:Maschke}, here we can generalise Tindzogho Ntsiri's conclusions while simplifying his proof. The resulting statement is as follows.


\begin{theorem}[{cf. \cite[\S5.1.1, Th\'eor\`eme 5.7]{TEtude} and \cite[Theorem 3.2]{TStructure}}]\label{t:threefields}
All objects (including functions and actions) are supposed to be definable in a universe of finite Morley rank.

Let $\K, \L, \M$ be fields and $U, V, W$ be vector spaces over $\K$, $\L$, $\M$, respectively.
Suppose there is a group $T$ acting (not necessarily faithfully) by scalar multiplication on $U$, $V$, $W$, i.e. as a subgroup of $\K^\times$, $\L^\times$, $\M^\times$. Let $\kappa: T\to \K^\times$, $\lambda: T \to \L^\times$ and $\mu: T\to \M^\times$ be the associated group homomorphisms, and let $\rho = \kappa \times \lambda \times \mu : T \to \K^\times \times \L^\times \times \M^\times$.

Let $\beta : U \times V \to W$ be a non-zero bi-additive map and suppose that $\beta$ is \emph{plus-or-minus-$T$-covariant}, i.e.
%
there are $\varepsilon_\K, \varepsilon_\L, \varepsilon_\M  \in \{\pm 1\}$ such that for all $(t, u, v) \in T\times U \times V$, one has $\beta(t^{\varepsilon_\K} \cdot u, t^{\varepsilon_\L}\cdot v) = t^{\varepsilon_\M} \cdot \beta(u, v)$.

Then:
\begin{itemize}
\item
$\ker \rho = \ker \kappa \cap \ker \lambda = \ker \kappa \cap \ker \mu$;
\item
if $\ker \kappa / \ker \rho$ is infinite, then $\L \simeq \M$;
\item
if both $\ker \kappa / \ker \rho$ and $\im \kappa$ are infinite, then $\K \simeq \L \simeq \M$;
\item
if $\ker \mu / \ker \rho$ is infinite, then $\K \simeq \L$;
\item
if both $\ker \mu / \ker \rho$ and $\im \mu$ are infinite, then $\K \simeq \L \simeq \M$.
\end{itemize}
\end{theorem}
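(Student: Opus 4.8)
The plan is to extract, from the plus-or-minus-$T$-covariance of $\beta$, two families of definable partial group homomorphisms between the multiplicative groups $\K^\times$, $\L^\times$, $\M^\times$, and then invoke the classical rigidity of infinite definable fields of finite Morley rank (no definable field automorphisms in characteristic $0$, and in characteristic $p$ the structure of $\Aut_{\rm def}$) to upgrade these partial homomorphisms to field isomorphisms on the relevant quotients. Concretely, after possibly replacing the $T$-actions by their inverses (which only flips some $\varepsilon$'s and does not affect the kernels $\ker\kappa,\ker\lambda,\ker\mu$), I may assume $\varepsilon_\K=\varepsilon_\L=\varepsilon_\M=+1$, so $\beta(tu,tv)=t\beta(u,v)$ for all $t\in T$.

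First I would establish the kernel identity. Fix $u_0,v_0$ with $\beta(u_0,v_0)\neq 0$; this is possible since $\beta\neq 0$. If $t\in\ker\kappa\cap\ker\lambda$ then $t$ fixes $u_0$ and $v_0$, so $\beta(u_0,v_0)=\beta(tu_0,tv_0)=t\beta(u_0,v_0)$, forcing $t\in\ker\mu$; hence $\ker\kappa\cap\ker\lambda\subseteq\ker\mu$, and the reverse-type inclusions are symmetric, giving $\ker\rho=\ker\kappa\cap\ker\lambda=\ker\kappa\cap\ker\mu=\ker\lambda\cap\ker\mu$. Next, for the second bullet, suppose $\ker\kappa/\ker\rho$ is infinite. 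Restricting $\beta$ to $\{u_0\}\times V$, covariance gives, for $t\in\ker\kappa$, $\beta(u_0,tv)=t\beta(u_0,v)$; since $t$ fixes $u_0$ this says the additive map $v\mapsto\beta(u_0,v)$ intertwines the $\L^\times$-action of $\lambda(t)$ on $V$ with the $\M^\times$-action of $\mu(t)$ on $W$. The point is to push this through Zilber's Field Theorem: the subgroup $\lambda(\ker\kappa)\leq\L^\times$ acts on the span of the image and, together with the additive structure, generates a definable subfield; because $\ker\kappa/\ker\rho\hookrightarrow\ker\kappa/(\ker\kappa\cap\ker\lambda)\simeq\lambda(\ker\kappa)$ is infinite, this subfield is infinite, hence (Macintyre) algebraically closed, hence all of $\L$, and the intertwining additive map becomes a definable field embedding $\L\to\M$; symmetrically one gets $\M\to\L$, and since infinite fields of finite Morley rank have no proper definable subfields of the same rank, $\L\simeq\M$. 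For the third bullet one runs the same argument on the other slot: with $\ker\kappa/\ker\rho$ infinite one already has a definable iso $\L\simeq\M$ via $v\mapsto\beta(u_0,v)$, and if moreover $\im\kappa$ is infinite, then varying $u$ over $U$ and using covariance in the first variable produces a definable embedding relating $\K$ to the common field $\L\simeq\M$, which again must be onto; so $\K\simeq\L\simeq\M$. The fourth and fifth bullets are the mirror images obtained by symmetry of the roles of $U$ and $V$ (equivalently, transposing $\beta$), using $\ker\mu$ and $\im\mu$ in place of $\ker\kappa$ and $\im\kappa$.

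The main obstacle, and the place where the argument needs genuine care rather than formal symmetry, is the passage from "an additive bijection intertwining two scalar actions of an infinite group" to "a definable field isomorphism": one must check that the relevant spans and images are definable, $T$-invariant and nonzero, that the field interpreted by Zilber's Field Theorem from $\lambda(\ker\kappa)$ acting on a minimal piece is genuinely a quotient-free copy of $\L$ (not merely an interpretable field receiving a map from $\L$), and that the two embeddings one obtains in opposite directions compose to something that must be surjective — this last uses crucially that there are no infinite definable extensions and no definable endomorphisms of infinite fields of finite Morley rank other than field automorphisms. A secondary subtlety is bookkeeping the signs $\varepsilon_\K,\varepsilon_\L,\varepsilon_\M$: replacing an action by its inverse is harmless for the kernels but one should note that the hypotheses and conclusions are all stated in terms of $\ker$'s and $\im$'s, which are invariant under $t\mapsto t^{-1}$, so the normalisation to all $+1$ is legitimate. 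Everything else — the kernel identity, the reductions between the five bullets — is routine once this field-rigidity step is in hand.
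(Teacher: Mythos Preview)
Your approach has a genuine gap in the treatment of the fourth and fifth bullets. Transposing $\beta$ swaps the roles of $U$ and $V$, hence of $\kappa$ and $\lambda$; it does \emph{not} exchange $\kappa$ with $\mu$. The target field $\M$ plays a structurally different role from the source fields: when $t \in \ker\mu$, covariance reads $\beta(\kappa(t)u, \lambda(t)v) = \beta(u,v)$, which relates $\kappa(t)$ and $\lambda(t)$ \emph{contravariantly} rather than covariantly. So the argument for bullet~2 applied after transposing yields ``if $\ker\lambda/\ker\rho$ is infinite then $\K\simeq\M$'', not bullet~4. The paper handles this asymmetry by introducing an auxiliary ``inverse field'' $\L'$ (same multiplicative group, addition replaced by $a\ast b = ab/(a+b)$, with $\L\simeq\L'$ via $x\mapsto x^{-1}$), so that the contravariant relation becomes covariant; one then locates an infinite definable subfield of $\K\times\L'$ and concludes $\K\simeq\L'\simeq\L$.

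The same asymmetry already bites your kernel argument: the inclusion $\ker\kappa\cap\ker\mu\subseteq\ker\lambda$ is \emph{not} symmetric to $\ker\kappa\cap\ker\lambda\subseteq\ker\mu$ and cannot be dismissed as such; it needs the separate computation (set $v_1 = (\lambda(t)-1)^{-1}v_0$ and expand $\beta(u_0,v_0) = \beta(u_0,(\lambda(t)-1)v_1)$) that the paper carries out. Finally, your route through Zilber's Field Theorem for bullets 2--3 is shakier than you suggest: an additive map $V\to W$ intertwining two scalar actions of an infinite group does not by itself produce a field embedding $\L\to\M$, since $V$ and $W$ are vector spaces rather than copies of the fields. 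The paper avoids this entirely by writing down the definable subring $\F_{\L,\M} = \{(\ell,m)\in\L\times\M: \forall (u,v),\ \beta(u,\ell v)=m\beta(u,v)\}$, checking it is a field, and observing that $\rho(\ker\kappa)$ lands in its unit group; once this subring is infinite it projects isomorphically onto each factor by Macintyre. That graph-of-an-isomorphism mechanism is what cleanly replaces your ``main obstacle'' paragraph.
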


The proof will take place in \S\ref{s:threefields:proof}; we start with easy remarks.

\subsubsection{Two Remarks on Fields}

\begin{notation*}
For a field $\F$, let $\F' = \F\setminus\{0\} \cup \{\infty\}$ and define $\ast:\F'\times \F'\to \F'$ by:
\[\left\{\begin{array}{ll}
a \ast b = \frac{ab}{a+b} & \mbox{ for } a, b \in \F\setminus \{0\} \mbox{ with } a \neq -b\\
a \ast (-a) = \infty & \mbox{ for } a \in \F\setminus\{0\}\\
a \ast \infty = \infty \ast a = a & \mbox{ for } a \in \F'\\
\end{array}\right.\]
Extend $\cdot$ to $\F'$ by $a \cdot \infty = \infty \cdot a = \infty$.
\end{notation*}

\begin{observation}
$(\F', \ast, \cdot)$ is a field isomorphic to $(\F, +, \cdot)$ via the function $\iota_\F (a) = a^{-1}$ (with the convention $0^{-1} = \infty$ and conversely); if $\F$ is definable then so are $\F'$ and this isomorphism.
\end{observation}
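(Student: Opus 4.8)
The plan is to avoid verifying the field axioms for $(\F',\ast,\cdot)$ directly and instead exploit transport of structure: since being a field is preserved under any bijection that intertwines the operations, it suffices to check that the stated map $\iota_\F\colon a\mapsto a^{-1}$ is a bijection $\F\to\F'$ carrying $+$ to $\ast$ and $\cdot$ to $\cdot$. That $\iota_\F$ is a well-defined bijection is immediate from the conventions $0^{-1}=\infty$, $\infty^{-1}=0$: it sends $\F\setminus\{0\}$ onto $\F'\setminus\{\infty\}$ by the usual inversion, swaps $0$ and $\infty$, and is its own inverse.

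Next I would check compatibility with addition, i.e.\ $\iota_\F(a+b)=\iota_\F(a)\ast\iota_\F(b)$ for all $a,b\in\F$, splitting into the obvious cases. When $a,b$ and $a+b$ are all nonzero, this is the one-line identity $\frac{a^{-1}b^{-1}}{a^{-1}+b^{-1}}=(a+b)^{-1}$, which matches the first clause defining $\ast$. When $b=-a\neq 0$ one has $\iota_\F(b)=-\iota_\F(a)$, so both sides equal $\infty=\iota_\F(0)$ by the second clause. When one of $a,b$ vanishes (including $a=b=0$), the clause $a\ast\infty=\infty\ast a=a$ gives the result at once. Compatibility with multiplication, $\iota_\F(ab)=\iota_\F(a)\cdot\iota_\F(b)$, follows from $(ab)^{-1}=a^{-1}b^{-1}$ together with the convention $a\cdot\infty=\infty\cdot a=\infty$, which exactly reflects that $ab=0$ iff $a=0$ or $b=0$. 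This establishes that $(\F',\ast,\cdot)$ is a field isomorphic to $(\F,+,\cdot)$ via $\iota_\F$; in particular $\infty$ is the $\ast$-identity, $1$ the $\cdot$-identity, $-a$ the $\ast$-inverse of $a$, and $a^{-1}$ (the original one) the $\cdot$-inverse.

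For the definability clause, one realises $\F'$ as a definable set, for instance by taking a copy of $\F$ with the element $0$ renamed, or simply by letting $\infty$ be coded by $0$ and reading $\ast$ and $\cdot$ through that coding; the three-clause definitions of $\ast$ and the extension of $\cdot$ are then given by explicit first-order formulas in $\F$, and $\iota_\F$ is the graph of a definable function. Hence $\F'$ and the isomorphism are definable whenever $\F$ is. I expect no genuine obstacle here; the only point requiring a little care is the bookkeeping of the degenerate cases involving $\infty$, and in particular confirming that, under $\iota_\F$, the additive identity $0\in\F$ corresponds to $\infty$ and additive inverses correspond to the $\ast$-inverses prescribed by the second clause.
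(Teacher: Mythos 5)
Your proof is correct and is exactly the routine verification the paper leaves implicit (the Observation is stated without proof there): transport of structure along $\iota_\F$, with the key identity $\frac{a^{-1}b^{-1}}{a^{-1}+b^{-1}}=(a+b)^{-1}$ and the degenerate cases handled by the clauses defining $\ast$. The definability remark is also handled as intended, since $\F'$ is just $\F$ carried along a definable bijection.
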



\begin{observation}
Let $\F_1$ and $\F_2$ be fields, and denote by $\pi_i : \F_1 \times \F_2 \to \F_i$ the projection maps. If a subring $R \leq \F_1\times \F_2$ is a field, then $\pi_1(R) \simeq \pi_2(R)$ (actually $R$ stands for the graph of one such field isomorphism; alternatively, $\pi_1(R) \simeq R \simeq \pi_2(R)$). In particular $\F_1$ and $\F_2$ have the same characteristic.

If in addition $\F_1$, $\F_2$, and $R$ are definable in a theory of finite Morley rank with $R$ infinite, then $\F_1 \simeq \F_2$.
\end{observation}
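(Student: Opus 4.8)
The plan is entirely elementary, and I expect no genuine obstacle: the only ingredient beyond ring-theoretic formalities is the quoted consequence of Macintyre's theorem, which is already recalled in the introduction.

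First I would observe that the two coordinate projections restrict to ring homomorphisms $\pi_i|_R\colon R\to\F_i$. Since $R$ is a (unital) subring of $\F_1\times\F_2$ it contains $1=(1,1)$, so $\pi_i|_R(1)=1\neq 0$ and $\pi_i|_R$ is not the zero map; as $R$ is a field its kernel is a proper ideal, hence $0$, so $\pi_i|_R$ is injective. (The containment $(1,1)\in R$ is exactly what rules out the degenerate situation $R\subseteq\F_1\times\{0\}$ or $R\subseteq\{0\}\times\F_2$, in which one of the projections would vanish on $R$.) Thus $\pi_i|_R$ is a field isomorphism of $R$ onto the subfield $\pi_i(R)\leq\F_i$, and composing $(\pi_1|_R)^{-1}$ with $\pi_2|_R$ yields an isomorphism $\pi_1(R)\simeq\pi_2(R)$ carrying $\pi_1(r)$ to $\pi_2(r)$; its graph, viewed inside $\pi_1(R)\times\pi_2(R)\subseteq\F_1\times\F_2$, is precisely $R$. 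This justifies the parenthetical reading $\pi_1(R)\simeq R\simeq\pi_2(R)$. For the characteristic: each $\pi_i(R)$ is a subfield of $\F_i$ containing $1_{\F_i}$, hence has the characteristic of $\F_i$; and the two are equal since $\pi_1(R)\simeq\pi_2(R)$, so $\F_1$ and $\F_2$ share a characteristic.

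For the second assertion, assume $\F_1$, $\F_2$, $R$ definable in a finite Morley rank universe with $R$ infinite. Then $\pi_i(R)$, being the image of the definable set $R$ under a coordinate projection, is a definable subset of $\F_i$; by the previous paragraph it is in fact an infinite definable subfield of $\F_i$. Here I would invoke Macintyre's theorem in the form recalled in the introduction: an infinite definable field in a finite Morley rank universe is algebraically closed, and in particular admits no proper infinite definable subfield (equivalently, there are no nontrivial definable extensions between infinite definable fields). Applied to $\pi_i(R)\leq\F_i$ this forces $\pi_i(R)=\F_i$ for $i=1,2$, and therefore $\F_1=\pi_1(R)\simeq R\simeq\pi_2(R)=\F_2$. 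The only step carrying any content is this appeal to Macintyre's theorem; everything else is the standard fact that a ring homomorphism out of a field is zero or injective.
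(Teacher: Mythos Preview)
Your argument is correct and matches the paper's own proof in substance. The paper phrases the first part as showing that $R$ is a one-to-one functional relation (i.e., that $(0,y)\in R$ forces $y=0$, and symmetrically), which is exactly equivalent to your observation that each $\pi_i|_R$ is an injective ring homomorphism out of a field; the second part is identical, invoking Macintyre's theorem to rule out proper definable subfields.
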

\begin{proofclaim}
It suffices to see that $R$ is a one-to-one, functional relation. Since $R$ is an additive subgroup of $\F_1\times \F_2$, it suffices to see that $(0, y) \in R$ entails $y = 0$: which is obvious since $R$ is a field.

For the second claim, bear in mind that there can be no extensions of infinite, definable fields for all such are algebraically closed \cite[Theorem 8.1]{BNGroups}.
\end{proofclaim}

\subsubsection{An Example to Understand}

The following will not be used anywhere in the present article, but it involves all ideas underlying the proof of Theorem \ref{t:threefields}.

\begin{lemma*}\label{l:covariantadditive}
All objects (including functions and actions) are supposed to be definable in a universe of finite Morley rank.

Let $\K, \L$ be fields and $U, V$ be vector spaces over $\K, \L$, respectively. Suppose there is a group $T$ acting (not necessarily faithfully) by scalar multiplication on $U, V$, i.e. as a subgroup of $\K^\times$, $\L^\times$. Let $\kappa: T \to \K^\times$ and $\lambda : T\to \L^\times$ be the associated group homomorphisms, and let $\rho = \kappa \times \lambda : T \to \K^\times \times \L^\times$.

Let $\alpha: U \to V$ be a non-zero additive map and suppose that $\alpha$ is either $T$-covariant, i.e. $\alpha(t\cdot u) = t\cdot \alpha(u)$, or $T$-contravariant, i.e. $\alpha(t\cdot u) = t^{-1}\cdot \alpha(u)$.

Then:
\begin{itemize}
\item
$\ker \rho = \ker \kappa = \ker \lambda$;
\item
if $\im \kappa$ (or equivalently, $\im \lambda$) is infinite, then $\K\simeq \L$ definably.
\end{itemize}
\end{lemma*}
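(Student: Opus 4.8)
The plan is to imitate the structure the author foreshadowed: first show that the kernels agree, then — assuming the common image is infinite — build a definable field isomorphism $\K \simeq \L$ as the graph of an interpretable subfield of $\K \times \L$. Throughout I replace $\alpha$ by the $T$-covariant map it becomes after passing to $\F'$ (the Observation on $(\F',\ast,\cdot)$) if $\alpha$ is contravariant; so without loss of generality $\alpha(t\cdot u) = t\cdot\alpha(u)$ for all $t \in T$, $u \in U$.

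First I would treat the kernels. If $t \in \ker\kappa$ then $t$ acts trivially on $U$, so for every $u$ one has $\alpha(u) = \alpha(t\cdot u) = t\cdot\alpha(u) = \lambda(t)\cdot\alpha(u)$; since $\alpha \neq 0$, pick $u$ with $\alpha(u)\neq 0$ and conclude $\lambda(t) = 1$, i.e. $t \in \ker\lambda$. The reverse inclusion is symmetric (using that $\im\alpha$ generates a nonzero $\L$-subspace, or just that $\alpha\neq 0$ again with the roles swapped). Hence $\ker\kappa = \ker\lambda = \ker(\kappa\times\lambda) = \ker\rho$.

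For the second bullet, assume $\im\kappa$ (equivalently $\im\lambda$, by the first bullet) is infinite. The key object is
\[
R \;=\; \bigl\{\,(\kappa(t),\lambda(t)) : t \in T\,\bigr\} \;\subseteq\; \K^\times\times\L^\times,
\]
or rather the subring of $\K\times\L$ it generates. I claim $\alpha$ forces this to be (the graph of) a field isomorphism. The mechanism: fix $u_0 \in U$ with $\alpha(u_0) \neq 0$, and consider the map $\K \to V$, $x \mapsto$ "$\alpha$ evaluated on $x\cdot u_0$" — but $x\cdot u_0$ only makes sense for $x \in \K$, not just $x \in \im\kappa$, so instead one works with the definable set $S = \{(x,y) \in \K\times\L : y\cdot\alpha(u_0) = \alpha(x\cdot u_0)\}$. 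Covariance shows $S \supseteq R$; additivity of $\alpha$ and of scalar multiplication shows $S$ is closed under coordinatewise addition; and the identity $\alpha((x x')\cdot u_0) = \alpha(x\cdot(x'\cdot u_0))$ together with $T$-covariance on the "inner" scalar — here one needs that the $T$-action and the $\K$-action on $U$ commute, which they do as $T$ acts as a subgroup of $\K^\times$ — lets one check $S$ is closed under multiplication on the elements coming from $R$, and then on its additive closure. So the subring $R^* \leq \K\times\L$ generated by $R$ is contained in $S$ and is a one-to-one functional relation by the argument in the second Observation (an additive subgroup that is a ring with $1$ and meets $0\times\L$ only in $(0,0)$). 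Since $R^*$ contains $\im\kappa$ which is infinite, $R^*$ is infinite; it is definable; so by that Observation $\K \simeq \L$ definably, and in particular they share characteristic.

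The step I expect to be the main obstacle is the multiplicativity in the construction of $R^*$: bi-/uni-additivity is free, but to get a *subring* one must push the $T$-covariance identity $\alpha((x x')\cdot u_0) = x\cdot\alpha(x'\cdot u_0)$ beyond scalars of the form $\kappa(t)$ to all of $\im\kappa$ and then, via the additive closure, to the generated ring — and one has to be careful that "$x\cdot u_0$" is only defined for $x\in\K$ while the twist $t\cdot(-)$ is only indexed by $t\in T$. The clean way around this is exactly the author's: never speak of $x\cdot u_0$ for $x\notin\K$, work entirely inside the definable relation $S\subseteq\K\times\L$, observe $S$ is an additive subgroup containing the multiplicatively-closed set $\{(\kappa(t),\lambda(t))\}$, and show the subring it generates still lies in $S$ and is the graph of a field iso — the finite-Morley-rank input (no proper extensions of infinite definable fields, \cite[Theorem 8.1]{BNGroups}) then upgrades "isomorphic subfields" to "$\K \simeq \L$" and rules out the degenerate possibility that $R^*$ is a proper, smaller field sitting diagonally. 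The three-field Theorem \ref{t:threefields} is then proved by running this argument twice, once along each of the two "collapsing" pairs of kernels.
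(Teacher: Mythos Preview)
Your strategy matches the paper's, and the kernel argument is essentially right --- though calling the reverse inclusion ``symmetric'' is misleading, since $\alpha$ only goes one way. The paper makes the asymmetry explicit: for $t\in\ker\lambda\setminus\ker\kappa$ it sets $u_1=(\kappa(t)-1)^{-1}\cdot u_0$ and computes $\alpha(u_0)=\alpha((\kappa(t)-1)u_1)=\lambda(t)^{\pm1}\alpha(u_1)-\alpha(u_1)=0$, a contradiction. Your parenthetical does not quite supply this step.

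The real gap is in the second claim. Your set $S=\{(x,y):y\cdot\alpha(u_0)=\alpha(x\cdot u_0)\}$, pinned at a single $u_0$, is additively closed and absorbs multiplication by $\rho(T)$, but it is \emph{not} a subring in general: with $U=V=\K=\L$, $u_0=1$, and $\alpha(x)=x+x^p$, multiplicativity fails. You then retreat to the subring $R^*$ generated by $\rho(T)$ and assert it is definable and a field; neither is justified. The additive span of a definable set need not be definable (Zilber indecomposability does not apply blindly to $\rho(T)$ viewed as a subset of the additive group $\K_+\times\L_+$), and a subring of the non-domain $\K\times\L$ need not have inverses.

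The paper's fix is a single universal quantifier: set
\[
\F_{\K,\L}=\{(k,\ell)\in\K\times\L:\ \forall u\in U,\ \alpha(k\cdot u)=\ell\cdot\alpha(u)\}.
\]
This is definable by inspection; it is a subring because one may substitute $k'u$ for $u$ to check $(kk',\ell\ell')\in\F_{\K,\L}$; and it is a field because substituting $k^{-1}u$ for $u$ yields $(k^{-1},\ell^{-1})\in\F_{\K,\L}$ whenever $(k,\ell)$ is there with $k\neq 0$. Since $\rho(T)\leq\F_{\K,\L}^\times$ is infinite, the second Observation applies directly. Your instinct in the final paragraph --- that multiplicativity is the crux --- was exactly right; the resolution is simply to quantify over $U$ rather than to generate from $\rho(T)$.
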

\begin{proof}
Since $\alpha$ is non-zero, there is $u_0 \in U$ with $\alpha(u_0) \neq 0$.
Let $t \in \ker \kappa$. Then $\lambda(t) \cdot \alpha(u_0) = \alpha(\kappa(t)^{\pm 1}\cdot u_0) = \alpha(u_0) \neq 0$, where the value $\pm 1$ depends on whether $\alpha$ is $T$-covariant or $T$-contravariant; hence $\lambda(t) = 1$. Conversely, if $t \in \ker \lambda \setminus \ker \kappa$, then letting $u_1 = (\kappa(t) - 1)^{-1}\cdot u_0$ one finds:
\[\alpha(u_0) = \alpha((\kappa(t) - 1)\cdot u_1) = \lambda(t)^{\pm 1}\cdot \alpha(u_1) - \alpha(u_1) = 0\]
which is a contradiction.

We prove the second claim for a covariant map. Let $\F_{\K, \L} = \{(k, \ell) \in \K\times \L: \forall u \in U, \alpha(k \cdot u) = \ell \cdot \alpha(u)\}$. One easily checks that $\F_{\K, \L}$ is a definable subfield of the ring $\K\times \L$.
Also let $\G_{\K, \L} = \F_{\K, \L}^\times$.
If $\im \kappa \simeq T/\ker \kappa \simeq \im \lambda$ is infinite, then so is $\rho(T) \leq \G_{\K, \L}$, and therefore $\F_{\K, \L}$ too. It follows $\K \simeq \L$.

The proof of the contravariant case is even more instructive. Let $\F_{\K, \L'} = \{(k, \ell) \in \K\times \L\setminus\{0\}: \forall u \in U, \ell\cdot \alpha(k\cdot u) = \alpha(u)\}\cup\{(0, \infty)\}$. We claim that $\F_{\K, \L'}$ is a definable subfield of $\K\times \L'$; perhaps the only non-trivial claim is stability under the additive law. So suppose $(k_i, \ell_i) \in \K\times \L'$ for $i\in\{1, 2\}$; the cases where $\ell_i = \infty$ or $\ell_1 + \ell_2 = 0$ are easily dealt with, so we assume not. Then for $u \in U$:
\begin{align*}
(\ell_1\ast \ell_2) \cdot \alpha((k_1 + k_2)\cdot u) & = \frac{\ell_1 \ell_2}{\ell_1 + \ell_2} \cdot \left(\alpha(k_1\cdot u) + \alpha(k_2 \cdot u)\right)\\
& = \frac{\ell_2}{\ell_1 + \ell_2} \cdot \alpha(u) + \frac{\ell_1}{\ell_1 + \ell_2} \cdot \alpha(u)\\
& = \alpha(u)
\end{align*}
as desired.
Also let $\G_{\K, \L'} = \{(k, \ell) \in \K^\times \times \L'^\times: \forall u \in U, \ell \cdot \alpha(k\cdot u) = \alpha(u)\}$; clearly $\G_{\K, \L'} = \F_{\K, \L'}^\times$. If $\im \kappa$ is infinite, then by contravariance so are $\rho(T) \leq \G_{\K, \L'}$ and $\F_{\K, \L'}$. Hence $\K \simeq \L' \simeq \L$.

Using the same argument twice is unsatisfactory. So suppose again that $\alpha$ is $T$-contravariant. By means of the inverse isomorphism $\iota_\L: \L\simeq \L'$, view $V$ as a vector space over $\L'$. Then $\alpha: U \to V$ is $T$-covariant. Since the other assumptions and the statements are invariant under this change, the contravariant case actually reduces to the covariant one (first claim included).
\end{proof}

\subsubsection{Proof of Theorem \ref{t:threefields}}\label{s:threefields:proof}


\begin{proof}[Proof of Theorem \ref{t:threefields}]
%
%
Throughout the proof we assume $\varepsilon_\K = \varepsilon_\L = \varepsilon_\M = 1$; we explain why in the end.

The proof that $\ker \rho = \ker \kappa \cap \ker \lambda = \ker \kappa \cap \ker \mu$ is essentially as in the previous Lemma. Since $\beta$ is non-zero there is $(u_0, v_0) \in U\times V$ with $w_0 = \beta(u_0, v_0) \neq 0$.
If $t \in \ker \kappa \cap \ker \lambda$, then $\mu(t)\cdot w_0 = \beta(\kappa(t)\cdot u_0, \lambda(t) \cdot v_0) = w_0 \neq 0$, so $\mu(t) = 1$. Conversely for $t \in (\ker \kappa \cap \ker \mu) \setminus \ker \lambda$, let $v_1 = (\lambda(t) - 1)^{-1}\cdot v_0$. Then:
\begin{align*}
\beta(u_0, v_0) & = \beta(u_0, (\lambda(t) - 1)\cdot v_1) = \beta(\kappa(t) \cdot u_0, \lambda(t)\cdot v_1) - \beta(u_0, v_1)\\
& = \mu(t) \cdot \beta(u_0, v_1) - \beta(u_0, v_1) = 0
\end{align*}
which is a contradiction.
We now need a theory of notations.

\begin{localnotation*}\
\begin{itemize}
\item
Let $\F_{\K, \M} = \{(k, m) \in \K\times \M: \forall (u, v) \in U\times V, \beta(k\cdot u, v) = m \cdot \beta(u, v)\}$.
\item
Let $\F_{\K, \L'} = \{(k, \ell) \in \K\times (\L\setminus\{0\}): \forall (u, v) \in U\times V, \beta(k\cdot u, \ell \cdot v) = \beta(u,v)\}\cup \{(0, \infty)\}$.
%
%
\item
Let $\G_{\K, \L, \M} = \{(k, \ell, m) \in \K^\times \times \L^\times \times \M^\times: \forall (u, v) \in U\times V, \beta(k\cdot u, \ell \cdot v) = m \cdot \beta(u, v)\}$.
\item
Let $\pi_{\K^\times}: \K^\times \times \L^\times \times \M^\times \to \K^\times$ denote projection on $\K^\times$ (define $\pi_{\L^\times}$ and $\pi_{\M^\times}$ likewise).
\item
Let $\G_{\K, \M} = \G_{\K, \L, \M} \cap \ker \pi_{\L^\times} = \{(k, 1, m): (k, m) \in \F_{\K, \M}^\times\}$.
\item
Let $\G_{\K, \L'} = \G_{\K, \L, \M} \cap \ker \pi_{\M^\times} = \{(k, \ell, 1): (k, \ell) \in \F_{\K, \L'}^\times\}$.
\end{itemize}
\end{localnotation*}

One should also define $\F_{\L, \M}$ and $\G_{\L, \M}$ similarly.
As in the above Lemma, $\F_{\K, \M}$ is a definable subfield of $\K\times \M$ and $\F_{\K, \L'}$ is one of $\K\times \L'$.
Clearly $\G_{\K, \L, \M}$ is a subgroup of $\K^\times \times \L^\times \times \M^\times$; by the covariance assumption, $\rho(T) \leq \G_{\K, \L, \M}$.
We are finally ready for the proof.

Suppose $\ker \kappa/\ker \rho$ to be infinite. This means that $\rho(\ker \kappa) \leq \G_{\K, \L, \M} \cap \ker \pi_{\K^\times} = \G_{\L, \M}$ is infinite, and therefore so is $\F_{\L, \M}$: it follows that $\L \simeq \M$.

Suppose that both $\ker \kappa/\ker \rho$ and $\im \kappa$ are infinite.
Here again $\F_{\L, \M}$ is infinite, so it surjects onto $\L$: hence $\pi_{\L}(\F_{\L, \M}) = \L$ and $\pi_{\L^\times}(\G_{\L, \M}) = \L^\times$. But since $\im \kappa$ is infinite too, $\G_{\L, \M}$ has infinite index in $\rho(T) \cdot \G_{\L, \M} \leq \G_{\K, \L, \M}$. Therefore $\ker \pi_{\L^\times} \cap \G_{\K, \L, \M} = \G_{\K, \M}$ is infinite; so is $\F_{\K, \M}$, proving $\K \simeq \M$.

Now suppose that $\ker \mu /\ker \rho$ is infinite. This means that $\rho(\ker \mu) \leq \G_{\K, \L'}$ is infinite, and therefore so is $\F_{\K, \L'}$: it follows that $\K \simeq \L' \simeq \L$.

Finally suppose that both $\ker \mu /\ker \rho$ and $\im \mu$ are infinite.
Here again $\pi_{\L^\times}(\F_{\K,\L'}) = \L^\times$ (one should remember that $\L^\times$ and $\L'^\times$ have the same underlying set). But in addition, $\im \mu$ is infinite, so $\G_{\K, \L'}$ has infinite index in $\rho(T) \cdot \G_{\K, \L'} \leq \G_{\K, \L, \M}$: proving that $\ker \pi_{\L^\times} \cap \G_{\K, \L, \M} = \G_{\K, \M}$ is infinite, and $\K \simeq \M$.

It remains to cover the other cases. Whenever $\varepsilon_\F = -1$, view the corresponding $\F$-vector space as a vector space over $\F'$ via the inverse isomorphism $\iota_\F: \F \simeq \F'$. Then $\beta$ is covariant in the usual sense $\beta(t\cdot u, t\cdot v) = t\cdot \beta(u, v)$.
\end{proof}

\begin{remarks*}\
\begin{itemize}
\item
Tindzogho Ntsiri has another result \cite[\S5.1.2, Proposition 5.15 and Lemme 5.17]{TEtude} or \cite[Proposition 3.10]{TStructure}: assuming the characteristic (which is obviously common to $\K, \L, \M$, for instance by our method) is $p$ and $\kappa$, $\lambda$, $\mu$ are onto, one has $\K_+ \simeq \L_+ \simeq \M_+$ and $\K^\times \simeq \L^\times \simeq \M^\times$. We shall not use this.
\item
The author does not care for $n$-additive maps with $n \geq 3$.
\end{itemize}
\end{remarks*}

\subsection{Applied Tools: $\SL_2(\K)$-Modules}\label{s:applied}

We want some general results on $\SL_2(\K)$-modules of finite Morley rank; actually, part of the analysis requires less than the entire $\SL_2(\K)$-module structure and we shall try to clarify matters. This results in the notion of a non-trivial $T$-line, Definition \ref{d:line} below.

An important feature of our study is that we restrict to positive characteristic settings, as Fact \ref{f:char0} in \S\ref{s:conjecture} entirely settles the definable representation theory of algebraic groups in characteristic $0$.

From here on we adopt the following notations.

\begin{notation}\label{n:GUBTw}\
\begin{itemize}
\item
Let $\K$ be a definable field of positive characteristic $p$ and $G = \SL_2(\K)$;
\item
let $U$ be a maximal unipotent subgroup of $G$, $B = N_G(U)$ be the associated Borel subgroup, and $T \leq B$ be an algebraic torus;
\item
let $w \in N_G(T)\setminus T$ (an element of order at most $4$ inverting $T$).
\end{itemize}
Finally recall that $H$-module means: definable, connected, $H$-module.
\end{notation}

\begin{fact}[from {\cite[Lemma B]{BDRank3}}]\label{f:UVnilpotent}
If $V$ is a $U$-module, then the semi-direct product $U \ltimes V$ is nilpotent.
\end{fact}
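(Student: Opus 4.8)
The plan is to exploit that $U \simeq \K_+$ is definably a connected, unipotent abelian group (torsion-free if $p=0$, of exponent $p$ if $p>0$), and to combine this with the module structure $V$. Since we are in the definable-connected category, $W := U \ltimes V$ is a definable, connected, soluble group, with derived subgroup $W' = [U,V] \leq V$ abelian; so $W$ is metabelian. Nilpotency of a connected soluble group of finite Morley rank can be detected by showing that the lower central series stabilises at $1$ after finitely many steps, equivalently (given metabelianness) that $U$ acts on $V$ as a group of unipotent automorphisms in the appropriate ranked sense.

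First I would set up the descending central series $V = V_0 \supseteq V_1 \supseteq \cdots$, where $V_{i+1} = [V_i, U]$; each $V_i$ is a definable $U$-submodule of $V$, and by the descending chain condition on definable connected subgroups (finite Morley rank) the connected components stabilise: $V_i\o = V_{i+1}\o$ for $i$ large. The core of the argument is then to show this common connected subgroup is trivial. The key tool is that $U$ is a definable, connected, $p$-unipotent (or torsion-free, in characteristic $0$) group, hence has no infinite definable image in any torus and, crucially, acts ``unipotently'' on abelian sections: on a $U$-minimal infinite section $A = V_i\o/V_{i+1}\o$ the action would be by definable automorphisms with $C_A(U)$ a proper $U$-submodule. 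If this action were non-trivial, Zilber's Field Theorem applied to a suitable $U$-minimal quotient would produce a definable field $\mathbb{F}$ with $U$ embedding in $\mathbb{F}^\times$; but $U\simeq\K_+$ is divisible-or-of-exponent-$p$ and connected, and $\K_+$ has no infinite definable subgroup of $\mathbb{F}^\times$ in characteristic $0$ (torsion-free vs. the structure of $\mathbb{F}^\times$), while in characteristic $p$ the group $\K_+$ has exponent $p$ and $\mathbb{F}^\times$ has no elements of order $p$ — contradiction. Hence every $U$-minimal section of $V$ is $U$-trivial, so $C_{V_i\o/V_{i+1}\o}(U) \neq 0$ forces, after iterating, $V_i\o = V_{i+1}\o$ only when $V_i\o = 0$; thus the descending central series of $V$ under $U$ reaches $0$ in finitely many steps.

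Finally I would assemble nilpotency of $W = U\ltimes V$ itself: writing the lower central series $\gamma_1(W) = W \supseteq \gamma_2(W) = [W,W] \supseteq \cdots$, one has $\gamma_2(W) \leq [U,U]\cdot[U,V]\cdot[V,V] = [U,V] \leq V$ since $U$ and $V$ are abelian, and thereafter $\gamma_{i+1}(W) = [\gamma_i(W), W] = [\gamma_i(W), U]$ (as $\gamma_i(W)\leq V$ for $i\geq 2$ and $V$ is abelian), so $\gamma_{i+2}(W) = V_i$ in the notation above; by the previous paragraph this is $0$ for $i$ large, giving $W$ nilpotent. The main obstacle is the ``no non-trivial action'' step: one must be careful that $C_V(U)$ is nonzero on every nontrivial connected section, which is exactly where the structure of $U$ as a connected $p$-unipotent (or torsion-free) group is used against Zilber's Field Theorem — this is the only place genuine finite-Morley-rank input beyond chain conditions enters, and it should be cited from the linear/unipotent action theory rather than reproved. (Alternatively, one can quote directly that a connected unipotent group of finite Morley rank acts unipotently on any definable connected abelian group it normalises, which packages precisely this.)
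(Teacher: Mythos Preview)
The paper does not supply a proof of this Fact; it is simply cited from \cite{BDRank3}, so there is no ``paper's own proof'' to compare against. Your positive-characteristic argument is essentially the standard one and is correct: on a $U$-minimal section where $U$ acts nontrivially, Zilber's Field Theorem would embed the connected quotient $U/C_U(\cdot)$ --- infinite of exponent $p$ --- into some $\mathbb{F}^\times$, which is impossible since $x^p=1$ has at most $p$ solutions in any field. Two minor clean-ups: by Zilber's indecomposability each $[V_i,U]$ is already connected, so the $V_i$ themselves stabilise (not merely their connected components); and your indexing $\gamma_{i+2}(W)=V_i$ is off by one ($\gamma_{i+1}(W)=V_i$ for $i\geq 1$), though this is harmless for the conclusion.

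Your characteristic-$0$ clause, however, is wrong, and the paper explicitly flags this: the parenthetical immediately following the Fact reads ``One may however observe that this could fail in characteristic $0$.'' Your proposed obstruction ``torsion-free vs.\ the structure of $\mathbb{F}^\times$'' does not hold up: a torsion-free divisible abelian group can sit inside $\mathbb{F}^\times$ in characteristic $0$ with no contradiction, so Zilber's Field Theorem yields nothing there. Since Notation~\ref{n:GUBTw} has already fixed positive characteristic, this does not damage your proof of the Fact as actually stated --- but you should delete the characteristic-$0$ case rather than claim it.
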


(One may however observe that this could fail in characteristic $0$.)


\begin{notation}\label{n:Zi}
Let $V$ be a $U$-module.
\begin{itemize}
\item
Let $Z_0 = 0$ and $Z_{j+1}/Z_j = C_{V/Z_j}\o(U)$.

(As a consequence of Zilber's Indecomposibility Theorem, $[U, Z_{j+1}] \leq Z_j$.)
\item
Let $Y_j = Z_j/Z_{j-1}$.
\item
Let $\ell_U(V)$ be the least integer with $Z_n = V$. We call $\ell_U(V)$ the $U$-length, or unipotence length, of $V$.

(This is \emph{not} the same measure as in the proof of Fact \ref{f:Maschke}.)
\end{itemize}
\end{notation}

By Fact \ref{f:UVnilpotent}, the series $(Z_j)$ is strictly increasing until one reaches $Z_{\ell_U(V)} = V$.
Notice that if $V$ is actually a $B$-module, then so are $Z_j$ and $Y_j$.

\begin{lemma}[from {\cite[\S2.1, Step 7]{CDSmall}}]\label{l:bracket}
Let $V$ be a $B$-module. Then $[U, C_V(T)] \leq [T, V]$.
\end{lemma}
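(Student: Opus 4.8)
The plan is to fix $v \in C_V(T)$ and $u \in U$ and prove $[u,v] \in [T,V]$ directly. Parametrise $U = \{u_x : x \in \K\}$ with $u_x u_{x'} = u_{x+x'}$; in $\SL_2(\K)$ conjugation by the torus scales the parameter, $t u_x t^{-1} = u_{\chi(t)x}$ for an algebraic character $\chi \colon T \to \K^\times$, and this character is \emph{onto} (it has the form $s \mapsto s^k$ on $T \simeq \K^\times$, which is surjective since $\K$, being an infinite definable field, is algebraically closed). The single computation that matters uses only that $v$ is $T$-fixed:
\[
t\cdot[u_x,v] \;=\; (t u_x t^{-1})\cdot(t\cdot v)\,-\,t\cdot v \;=\; u_{\chi(t)x}\cdot v - v \;=\; [u_{\chi(t)x},v],
\]
so the torus carries $[u_x,v]$ to $[u_{\chi(t)x},v]$.

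I would then consider $W := \langle\, [u_x,v] : x \in \K \,\rangle \leq V$; it is definable and connected by Zilber's Indecomposability Theorem (as used throughout, e.g.\ around Notation \ref{n:Zi}), and it contains $[u,v]$. Let $\pi \colon V \to V/[T,V]$ be the quotient map, so that $\pi(W)$ is again definable and connected and $T$ acts trivially on $V/[T,V]$. Applying $\pi$ to the displayed identity gives $\pi([u_{\chi(t)x},v]) = \pi([u_x,v])$ for every $t$, so, since $\chi(T) = \K^\times$, the element $\pi([u_x,v])$ does not depend on $x \in \K^\times$. As $W$ is generated by the elements $[u_x,v]$ and the value at $x = 0$ is $0$, the group $\pi(W)$ is generated by the single element $\gamma := \pi([u_x,v])$. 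Now a definable connected group which is monogenic must be trivial — a monogenic group is finite or infinite cyclic, and an infinite cyclic group has the definable subgroup $\{2z : z \in \pi(W)\}$ of index two, contradicting connectedness, while a finite connected group is trivial. Hence $\pi(W) = 0$, i.e.\ $W \leq [T,V]$, so $[u,v] \in [T,V]$; letting $u$ and $v$ vary gives $[U, C_V(T)] \leq [T,V]$.

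The step I expect to need the most care is the bookkeeping making $W$ rigorous: one should check $W$ is genuinely a definable connected $B$-submodule (it is $B$-invariant by the identity above together with $u_z\cdot[u_x,v] = [u_{z+x},v]-[u_z,v]$), and if one prefers not to handle a separate $W$ for each $v$ one runs the same argument once with $W := [U, C_V(T)^{\circ}]$, which is contained in the definable connected subgroup $[U,V]$. A longer but perhaps more transparent route, and likely the one in \cite{CDSmall}, is an induction on the unipotence length $\ell_U(V)$: for $\ell_U(V)\leq 1$ the module is $U$-trivial and there is nothing to prove; in the inductive step one passes to the $B$-submodule $\langle v\rangle_B$ generated by $v$, observes that $T$ acts trivially on the definable hull $\langle v\rangle_B/[U,\langle v\rangle_B]$ (which is generated by the $T$-fixed image of $v$), so $[T,\langle v\rangle_B] \leq [U,\langle v\rangle_B]$, and concludes because $[U,\langle v\rangle_B]$ has strictly smaller unipotence length. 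In either case the heart of the matter is the same: modulo $[T,\,\cdot\,]$, a definable connected group built from a single $T$-fixed vector collapses.
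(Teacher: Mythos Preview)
Your main argument is correct and is essentially the paper's own proof: both show that for fixed $v\in C_V(T)$ the element $[u,v]$ is independent of $u\in U\setminus\{1\}$ modulo $[T,V]$ (you phrase this via the quotient $V/[T,V]$, the paper via the splitting $V=C_V(T)\oplus[T,V]$ from Fact~\ref{f:semisimple} and tracking the $C_V(T)$-coordinate), and then both invoke Zilber indecomposability together with ``connected and cyclic implies trivial'' to kill the residue. Your secondary inductive sketch, however, does not close as written: from $[T,\langle v\rangle_B]\le[U,\langle v\rangle_B]$ and the inductive hypothesis on $[U,\langle v\rangle_B]$ you do not obviously recover $[U,v]\le[T,V]$, so you should drop it and keep the first argument.
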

\begin{proof}
For brevity we let $C = C_V(T)$ and $D = [T, V]$.

Let $c_0 \in C$ and $u \in U\setminus\{1\}$. Since $V = C \oplus D$ by Fact \ref{f:semisimple}, there is $(c, d) \in C\times D$ with $[c_0, u] = c + d$. Now for any $t \in T$, $[c_0, t\cdot u] = c + t \cdot d \in \<c\> + D$. Since $T\cdot u$ generates $U$, one finds $[c_0, U] \leq \<c\> + D$; by Zilber's Indecomposibility Theorem and since the characteristic is finite, actually $[c_0, U] \leq D$, as desired.
\end{proof}

\begin{lemma}\label{l:noZiZi+1}
Let $V$ be a $B$-module of $U$-length exactly $n$ and $2 \leq j \leq n$. Then $T$ cannot centralise both $Y_j$ and $Y_{j-1}$ (see Notation \ref{n:Zi}). More precisely, if $T$ centralises $Y_{j-1}$ then $C_{Y_j}(T) = 0$.
\end{lemma}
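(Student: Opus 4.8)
The plan is to argue by contradiction, supposing that $T$ centralises both $Y_{j-1} = Z_{j-1}/Z_{j-2}$ and $Y_j = Z_j/Z_{j-1}$, and to derive a contradiction with the assumption $j \le \ell_U(V) = n$ — in fact, to show that already $T$ centralising $Y_{j-1}$ forces $C_{Y_j}(T) = 0$. First I would pass to the $B$-module $\bar V = V/Z_{j-2}$, so that $\bar Z_1 = Z_{j-1}/Z_{j-2} = Y_{j-1}$ and $\bar Z_2/\bar Z_1 = Y_j$; this reduces matters to the case $j = 2$, i.e.\ it suffices to show: if $V$ is a $B$-module with $T$ centralising $Y_1 = Z_1 = C_V^\circ(U)$, then $C_{Y_2}(T) = 0$, where $Y_2 = Z_2/Z_1$. (One checks that $Z_1, Z_2$ and the quotients $Y_1, Y_2$ of $\bar V$ are still $B$-modules, as noted after Notation \ref{n:Zi}, and that the $U$-length only drops under the quotient, which is harmless since we want a statement about $Y_j$ being $T$-trivial.)

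Next, working in this reduced situation, suppose toward a contradiction that there is $0 \neq \bar v \in C_{Y_2}(T)$, i.e.\ a coset $v + Z_1$ with $v \notin Z_1$ and $[T, v] \le Z_1$. The key tool is the commutator map $[\,\cdot\,, U]$ from $Y_2$ into $Y_1 = Z_1$: since $[U, Z_2] \le Z_1$ (the parenthetical consequence of Zilber's Indecomposability Theorem in Notation \ref{n:Zi}), the map $u \mapsto [v, u]$, suitably interpreted, sends $Z_2$ into $Z_1$ and is additive in the $U$-variable. I would then invoke Lemma \ref{l:bracket} applied to the $B$-module $V$ (or rather to $\bar V$): since $\bar v$ represents an element of $C_{Y_2}(T)$ — after possibly adjusting $v$ by an element of $Z_1$, which is legitimate because $V = C_V(T) \oplus [T,V]$ by Fact \ref{f:semisimple} and $Z_1 = C_V(T)$ since $T$ centralises $Y_1 = Z_1$ — we get $[v, U] \le [T, V]$. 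But $[T,V] \cap Z_1 = [T, Z_1] = 0$ by the $A_0 = Z_1$ case of Fact \ref{f:semisimple} together with $T$-triviality of $Z_1$. Since also $[v, U] \le [U, Z_2] \le Z_1$, we conclude $[v, U] = 0$, i.e.\ $v \in C_V^\circ(U) = Z_1$ (using connectedness / finite characteristic as in the proof of Lemma \ref{l:bracket}), contradicting $v \notin Z_1$.

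The main obstacle, and the step demanding care, is the reduction to $j=2$ together with the correct bookkeeping of which subgroups are genuinely $T$- and $B$-submodules after quotienting: one must be sure that the hypothesis "$T$ centralises $Y_{j-1}$" really does translate into "$T$ centralises $Z_1$ of $\bar V$" and hence into the hypothesis "$C_V(T) = Z_1$" needed to make Fact \ref{f:semisimple} and Lemma \ref{l:bracket} bite. The subtlety is that a priori $C_V(T)$ could be strictly larger than $Z_1$; but since $T$ centralises $Y_1 = Z_1$ by assumption, and $Z_1 = C_V^\circ(U)$, one needs $C_V(T)$ to land inside $Z_1$ — this is where Lemma \ref{l:bracket} is used in the contrapositive direction: any $T$-fixed vector $v$ would, via $[v, U] \le [T,V] \cap Z_1 = 0$, already be $U$-fixed, hence in $Z_1$. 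So really the argument is self-contained: one shows directly $C_V^\circ(T) \le Z_1$ from Lemma \ref{l:bracket} and Fact \ref{f:semisimple}, and the statement about $C_{Y_j}(T)$ follows by the same reasoning one rung up the central series, after the quotient $\bar V = V/Z_{j-2}$. The only genuinely delicate point is checking that everything remains definable and connected along the way, but that is routine given the conventions fixed in the excerpt.
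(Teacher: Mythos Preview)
Your approach is sound and genuinely different from the paper's, with one small gap at the end.

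The paper does not reduce to $j=2$ nor invoke Lemma~\ref{l:bracket}. It works directly with the bi-additive, $T$-covariant map
\[
\beta: U \times Y_j \to Y_{j-1},\qquad \beta(u, z+Z_{j-1}) = [u,z] + Z_{j-2},
\]
whose right radical is finite (an element orthogonal to all of $U$ lies in the finite group $C_{V/Z_{j-2}}(U)/(Z_{j-1}/Z_{j-2})$). For $x \in C_{Y_j}(T)$ and any $(u,t) \in U \times T$, covariance together with $T$-triviality of $Y_{j-1}$ gives $\beta(t\cdot u - u, x) = 0$; since $[T, u] = U$ for $u \neq 1$, this forces $\beta(U, x) = 0$, so $C_{Y_j}(T)$ lies in the finite radical and is therefore trivial by connectedness and Fact~\ref{f:semisimple}.

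Your route --- quotienting by $Z_{j-2}$ to reduce to $j=2$, then applying Lemma~\ref{l:bracket} together with $[T,\bar V] \cap \bar Z_1 = [T, \bar Z_1] = 0$ from Fact~\ref{f:semisimple} --- is a legitimate alternative, and really a repackaging of the same commutator calculation (indeed Lemma~\ref{l:bracket} is proved by a kindred trick). The reduction is correct: the $Z$-series of $\bar V = V/Z_{j-2}$ does begin with $Y_{j-1}$ and $Z_j/Z_{j-2}$. Your parenthetical ``$Z_1 = C_{\bar V}(T)$'' is neither true nor needed; what you actually use is $\bar Z_1 \leq C_{\bar V}(T)$ together with the covering clause of Fact~\ref{f:semisimple} to lift $\bar v$ into $C_{\bar V}(T) \cap \bar Z_2$, and that is fine.

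The gap is your final step: from $[U, v] = 0$ you conclude $v \in C_{\bar V}^\circ(U) = \bar Z_1$, but you only get $v \in C_{\bar V}(U)$, and the appeal to ``connectedness / finite characteristic as in Lemma~\ref{l:bracket}'' does not bridge this (that lemma's proof uses Zilber indecomposability for a different purpose). The fix is exactly the paper's closing move: your argument actually shows that $C_{Y_j}(T)$ is contained in the image of $C_{\bar V}(U)$ in $\bar Z_2/\bar Z_1$, which is finite since $\bar Z_1 = C_{\bar V}^\circ(U)$; then connectedness of $Y_j$ and the splitting $Y_j = [T, Y_j] \oplus C_{Y_j}(T)$ from Fact~\ref{f:semisimple} force $C_{Y_j}(T) = 0$. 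So argue with all of $C_{Y_j}(T)$ at once rather than with a single nonzero $\bar v$.
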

\begin{proof}
Let $\beta : U \times Y_j \to Y_{j-1}$ be given by:
\[\beta(u, z + Z_{j-1}) = [u, z] + Z_{j-2}\]
It is routine to check that $\beta$ is well-defined, bi-additive, and $T$-covariant.

We claim that the right radical of $\beta$ (usually denoted $U^\perp$) is finite. For if $z + Z_{j-1}$ is $\beta$-orthogonal to $U$, then $[U, z] \leq Z_{j-2}$ and $z \in C_{V/Z_{j-2}}(U)$, the connected component of which is exactly $Z_{j-1}$. 

Now assuming that $T$ centralises $Y_{j-1}$, let $x \in C_{Y_j}(T)$ and $(u, t) \in U \times T$; one finds:
\[\beta(t\cdot u - u, x) = t\cdot \beta(u, x) - \beta (u, x) = 0\]
Since $[T, u] = U$, we get $\beta(U, x) = 0$ for arbitrary $x \in C_{Y_j}(T)$, forcing finiteness of the latter, and its triviality by connectedness of $Y_j$ and Fact \ref{f:semisimple}.
\end{proof}

\begin{lemma}\label{l:Bruhat}
Let $V$ be an irreducible $G$-module of $U$-length exactly $n$. Then $Z_1 \cap w\cdot Z_{n-1}$ is finite.
\end{lemma}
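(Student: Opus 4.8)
The plan is to argue by contradiction. Let $W := (Z_1 \cap w\cdot Z_{n-1})^{\circ}$ and suppose $W \neq 0$. Since $W$ is definable and connected, so is the $G$-submodule $M$ it generates; as $V$ is $G$-irreducible and $M \neq 0$, we get $M = V$. I will show instead that $M \subseteq Z_{n-1}$, contradicting $Z_{n-1} \neq V$ — which holds precisely because $\ell_U(V) = n$. (The case $n = 1$ is vacuous as $Z_{n-1} = Z_0 = 0$; so assume $n \geq 2$, and then $Z_1 \subseteq Z_{n-1}$.)

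\textbf{Key facts.} Two observations drive the computation. First, $W \subseteq Z_1 = C_V^{\circ}(U) \subseteq C_V(U)$, so $U$ fixes $W$ pointwise. Second, $Z_{n-1}$ is a $B$-submodule (Notation \ref{n:Zi}, since $V$ is a $B$-module), and because $w$ inverts $T$ one has $w^2 \in C_G(T) = T \leq B$; hence $w\cdot W \subseteq w^2\cdot Z_{n-1} = Z_{n-1}$, while of course $b\cdot Z_1 = Z_1 \subseteq Z_{n-1}$ for every $b \in B$.

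\textbf{Main step.} Now I would invoke the Bruhat decomposition of $\SL_2(\K)$ in the form $G = B \cup BwU$ (in rank one, $BwB = BwU$). If $g \in B$ then $g\cdot W \subseteq g\cdot Z_1 = Z_1 \subseteq Z_{n-1}$. If $g = bwu$ with $b \in B$ and $u \in U$, then, using that $u$ fixes $W$ pointwise and that $Z_{n-1}$ is $B$-invariant,
\[ g\cdot W = bw\cdot(u\cdot W) = b\cdot(w\cdot W) \subseteq b\cdot Z_{n-1} = Z_{n-1}. \]
Thus $g\cdot W \subseteq Z_{n-1}$ for every $g \in G$, so $V = M = \langle G\cdot W\rangle \subseteq Z_{n-1}$ — the desired contradiction — and therefore $W = 0$.

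\textbf{Where the difficulty sits.} The argument is short, but the one point requiring care is the exact shape of the Bruhat decomposition: it is essential that the unipotent factor be applied first, i.e. to write a general element outside $B$ as $bwu$ rather than $uwb$. With the form $G = B \cup UwB$ the argument collapses, since $u$ does not normalise $w\cdot Z_1 = C_V^{\circ}(wUw^{-1})$ and one loses all control of $u\cdot(w\cdot Z_1)$. Everything else is routine bookkeeping with the $B$-invariance of the $Z_j$ and the triviality of the $U$-action on $Z_1$.
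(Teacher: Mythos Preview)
Your proof is correct and follows essentially the same route as the paper: both use the Bruhat decomposition $G = B \sqcup BwU$, the fact that $U$ fixes $Z_1$ pointwise, and $B$-invariance of $Z_{n-1}$ to show that the $G$-orbit of $Z_1 \cap w\cdot Z_{n-1}$ stays inside $Z_{n-1}$. The only cosmetic difference is that you pass to the connected component $W$ first and argue by contradiction, whereas the paper shows directly that $\langle G\cdot(Z_1\cap w\cdot Z_{n-1})\rangle \leq Z_{n-1}$ and then invokes (connected) irreducibility to conclude this submodule is finite.
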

\begin{proof}
We claim that $V_0 = \<G\cdot (Z_1\cap w\cdot Z_{n-1})\> \leq Z_{n-1}$. For let $v \in Z_1 \cap w \cdot Z_{n-1}$ and $g \in G$; by the Bruhat decomposition $G = B \sqcup BwU$ there are two cases.
\begin{itemize}
\item
If $g \in B$, write $g = tu$ with obvious notations; then $g\cdot v = t \cdot v \in Z_1 \leq Z_{n-1}$.
\item
If $g \in BwU$ then $g = bwu$ and $g\cdot v = bw \cdot v \in Z_{n-1}$ again.
\end{itemize}
Since $Z_{n-1}$ is proper in the irreducible $G$-module $V$, the $G$-module $V_0$ is finite.
\end{proof}

\begin{lemma}\label{l:noZn}
Let $V$ be an irreducible $G$-module of $U$-length exactly $n$. Then $T$ does not centralise $Y_n = Z_n/Z_{n-1} = V/Z_{n-1}$.
\end{lemma}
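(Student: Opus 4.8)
The strategy is a reductio: suppose $T$ centralises $Y_n=V/Z_{n-1}$ and derive a contradiction from the $G$-irreducibility of $V$.

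\emph{Setting up.} Since $Z_n=V$, the definition of the filtration gives $V/Z_{n-1}=C^{\circ}_{V/Z_{n-1}}(U)$, and as $V/Z_{n-1}$ is connected this is all of $C_{V/Z_{n-1}}(U)$; hence $[U,V]\le Z_{n-1}$. Combining with the hypothesis $[T,V]\le Z_{n-1}$ and $B=\langle U,T\rangle$, I get $[B,V]\le Z_{n-1}$. If $n=1$ this says $B$ centralises $V$; but then by the Bruhat decomposition $G=B\sqcup BwU$ the orbit of any $v$ is contained in $\{v,w\cdot v\}$, so $\langle G\cdot v\rangle$ is finite — impossible since $V$ is infinite. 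Hence $n\ge 2$.

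\emph{The $w$-flip.} I would next apply Bruhat to the $B$-submodule $Z_1=C^{\circ}_V(U)$, on which $U$ acts trivially: a $B$-translate of $Z_1$ is $Z_1$, while for $g\in BwU$ one has $g\cdot Z_1=b\cdot(w\cdot Z_1)\subseteq w\cdot Z_1+[B,V]\subseteq Z_{n-1}+w\cdot Z_1$. Thus $\langle G\cdot Z_1\rangle\le Z_{n-1}+w\cdot Z_1$, and since $Z_1\ne 0$ and $V$ is irreducible, $V=Z_{n-1}+w\cdot Z_1$. Now Lemma \ref{l:Bruhat} says $Z_1\cap w\cdot Z_{n-1}$ is finite, and applying the automorphism $x\mapsto w\cdot x$ of $V$ (using that $w^2\in T$ stabilises $Z_{n-1}$) so is $w\cdot Z_1\cap Z_{n-1}$. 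Therefore the projection $w\cdot Z_1\twoheadrightarrow V/Z_{n-1}=Y_n$ is a $T$-equivariant definable map with finite kernel; since $T$ centralises $Y_n$, the connected group $[T,w\cdot Z_1]$ lies inside that finite kernel, hence vanishes. So $T$ centralises $w\cdot Z_1$, and conjugating by $w^{-1}$, $T$ centralises $Z_1=Y_1$.

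\emph{Finishing.} When $n=2$ this already concludes: $Y_1$ and $Y_n=Y_2$ are consecutive layers, so Lemma \ref{l:noZiZi+1} (with $j=2$) upgrades ``$T$ centralises $Y_1$'' to $C_{Y_2}(T)=0$, contradicting that $T$ centralises $Y_2=Y_n\ne 0$. For $n\ge 3$ one must iterate: the information just obtained — $T$ trivial on $Z_1$ and on $w\cdot Z_1$, transversality of $w\cdot Z_1$ to $Z_{n-1}$, and $V=Z_{n-1}+w\cdot Z_1$ — has to be pushed upward through $Y_2,Y_3,\dots$, feeding it together with the covariant commutator maps $\beta_j\colon U\times Y_j\to Y_{j-1}$ from the proof of Lemma \ref{l:noZiZi+1} and the field identifications of Theorem \ref{t:threefields} into the same type of connectedness argument, until one again reaches a consecutive pair of $T$-centralised layers and invokes Lemma \ref{l:noZiZi+1}. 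I expect this propagation step to be the main obstacle; everything preceding it is routine bookkeeping with the filtration, Zilber's indecomposability theorem, and connectedness.
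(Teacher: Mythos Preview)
Your setup is correct and matches the paper: assuming $T$ centralises $Y_n$ you deduce $[T,V]\le Z_{n-1}$, and via Lemma~\ref{l:Bruhat} that $w\cdot Z_1$ embeds modulo a finite kernel into $Y_n$, hence $T$ centralises $w\cdot Z_1$. (The detour through $V=Z_{n-1}+w\cdot Z_1$ is harmless but unnecessary for this.) Your $n=2$ finish via Lemma~\ref{l:noZiZi+1} is also fine.

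The genuine gap is the case $n\ge 3$, which you yourself flag as ``the main obstacle'' and leave unproved. Worse, the propagation you sketch does not go in the right direction: Lemma~\ref{l:noZiZi+1} with $j=2$ turns ``$T$ centralises $Y_1$'' into $C_{Y_2}(T)=0$, which is the \emph{opposite} of what you would need to keep climbing, and Theorem~\ref{t:threefields} compares fields rather than manufacturing new $T$-centralised layers. There is no visible mechanism to reach a consecutive pair of centralised layers.

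The paper's argument is uniform in $n$ and bypasses any layer-by-layer induction. The missing ingredient is Lemma~\ref{l:bracket}: $[U,C_V(T)]\le[T,V]$. Once you know $w\cdot Z_1\le C_V(T)$ and $[T,V]\le Z_{n-1}$, take $u\in U$ with $(uw)^3=1$ and $a\in Z_1$. Since $U$ centralises $Z_1$, the relation $uwu\cdot a=(wuw)^{-1}\cdot a$ yields
\[
wa+[u,wa]\;=\;ia+w\cdot[u^{-1},wa].
\]
Now $[u,wa]\in[U,V]\le Z_{n-1}$; and $[u^{-1},wa]\in[U,C_V(T)]\le[T,V]$, whence $w\cdot[u^{-1},wa]\in w\cdot[T,V]=[T,V]\le Z_{n-1}$; finally $ia\in Z_1\le Z_{n-1}$. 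Therefore $wa\in w\cdot Z_1\cap Z_{n-1}$, which is finite by Lemma~\ref{l:Bruhat}: a contradiction since $Z_1$ is infinite. One computation with the Steinberg relation replaces the whole inductive scheme.
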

\begin{proof}
Suppose it does. By Lemma \ref{l:Bruhat}, $w\cdot Z_1 \cap Z_{n-1}$ is finite, so for some finite $K$ the quotient $w\cdot Z_1/K$ embeds into $Y_n$ as a $T$-module. It follows that $T$ centralises $w\cdot Z_1/K$: by connectedness of $Z_1$, $T$ centralises $w\cdot Z_1$.
Also notice that by assumption and Fact \ref{f:semisimple}, $[T, V] \leq Z_{n-1}$.

Now let $u \in U$ be such that $(uw)^3 = 1$ (see Notation \ref{n:SL2} below if necessary), let $i$ stand for the central involution of $G$ (it is harmless to let $i = 1$ in characteristic $2$), and $a \in Z_1$. Then:
\begin{align*}
uwu \cdot a & = uw a = w a + [u, wa]\\
= (wuw)^{-1} \cdot a & = w u^{-1} w a = i a + w[u^{-1}, wa].
\end{align*}
Notice that $[u, wa] \in [U, Z_n] \leq Z_{n-1}$. Moreover, since $wa \in w \cdot Z_1 \leq C_V(T)$, one has $[u^{-1}, wa] \in [U, C_V(T)]\leq [T, V]$ by Lemma \ref{l:bracket}, so $w[u^{-1}, wa] \in w \cdot [T, V] = [T, V] \leq Z_{n-1}$ as observed.

As a consequence $wa = i a + w[u^{-1}, wa] - [u, wa] \in w \cdot Z_1 \cap Z_{n-1}$ which is finite by Lemma \ref{l:Bruhat}, against $Z_1$ being infinite.
\end{proof}

\begin{lemma}\label{l:Tlines}
Let $V$ be a $B$-module. For $j \neq 1$, $Y_j$ from Notation \ref{n:Zi} is the direct sum of $C_{Y_j}(T)$ and of non-central $T$-minimal modules of the same rank as $\K$.

If $V$ is actually an irreducible $G$-module, then the same holds of $j = 1$.
\end{lemma}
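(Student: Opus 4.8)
Strip off the $T$-fixed part of $Y_j$ by Maschke's theorem, decompose what remains into $T$-minimal modules, and pin down their Morley rank by playing them off against $U\simeq\K_+$ through the commutation maps $\beta_j$ and the abstract tools of \S\ref{s:covariant}.

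\textbf{Decomposition.} Since $T\simeq\K^{\times}$ is definable, connected, soluble and has no $p$-torsion (we are in characteristic $p$), Fact~\ref{f:semisimple} applied to $T$ acting on the exponent-$p$ group $Y_j$ yields $Y_j=C_{Y_j}(T)\oplus[T,Y_j]$ and $C_{[T,Y_j]}(T)=0$. By Fact~\ref{f:Maschke} every $T$-submodule of $W:=[T,Y_j]$ has a $T$-complement, so a routine induction on Morley rank (a definable connected subgroup of the same rank as a connected group being the whole of it) writes $W=\bigoplus_i W_i$ with each $W_i$ a $T$-minimal module; from $C_{W_i}(T)\leq C_W(T)=0$ the module $W_i$ is non-central. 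A finite $T$-invariant subgroup of the connected group $W_i$ is centralised by the connected group $T$, hence is trivial; so all the radicals met below vanish on the pieces $W_i$. By Zilber's Field Theorem, $W_i\simeq(\F_i)_+$ for a definable field $\F_i$ with $T/C_T(W_i)\hookrightarrow\F_i^{\times}$, and $\rk W_i=\rk\F_i$; the whole content is that $\rk\F_i=\rk\K$.

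\textbf{The rank.} We may assume $j\leq n:=\ell_U(V)$ (else $Y_j=0$), so the bi-additive, $T$-covariant map $\beta=\beta_j\colon U\times Y_j\to Y_{j-1}$ of Lemma~\ref{l:noZiZi+1} is defined and, its right radical being finite, is non-degenerate on $U\times W_i$. Identifying $U$ with $\K_+$, the torus acts by $u\mapsto t^{2}u$, hence surjectively onto $\K^{\times}$, and $U$ is $T$-minimal (a $T$-invariant definable connected proper subgroup would be a proper $\K$-subspace of the line $\K$); so the field Zilber's theorem attaches to the $T$-module $U$ is $\K$ itself, and $C_T(U)=\{\pm1\}$ is finite. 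Decompose $Y_{j-1}=C_{Y_{j-1}}(T)\oplus\bigoplus_k W'_k$ as above with $T$-equivariant projections, and take a non-central $T$-minimal $W_i\leq Y_j$. If the composition $\beta^{0}$ of $\beta|_{U\times W_i}$ with the projection onto $C:=C_{Y_{j-1}}(T)$ is non-zero, then — exactly as in the preliminary Lemma of \S\ref{s:covariant} — the set $\{(k,\ell)\in\K\times\F_i'\colon\beta^{0}(k\cdot u,\ell\cdot x)=\beta^{0}(u,x)\text{ for all }u,x\}\cup\{(0,\infty)\}$ is a definable subfield of $\K\times\F_i'$ ($\F_i'$ being the $\iota$-twist of $\F_i$); by $T$-covariance and triviality of the $T$-action on $C$ it contains the image of $T$, which is infinite because $C_T(U)$ is finite; hence $\K\simeq\F_i'\simeq\F_i$ and $\rk W_i=\rk\K$. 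Otherwise $\beta(U,W_i)\subseteq\bigoplus_k W'_k$ and some $\pi_k\circ\beta$ restricts to a non-zero bi-additive, $T$-covariant map $U\times W_i\to W'_k$, to which Theorem~\ref{t:threefields} applies with $\K$, $\F_i$, $\F'_k$ in its three slots. As $T$ is connected, each of $C_T(W_i)$, $C_T(W'_k)$ is either of infinite index — with infinite image in the relevant multiplicative group — or all of $T$ (excluded); whenever one of them is infinite, the kernel/image hypotheses of Theorem~\ref{t:threefields} are met (since $C_T(U)$ is finite) and force $\F_i\simeq\K$. The remaining sub-case, where $\beta_j(U,W_i)$ avoids the $T$-fixed part of $Y_{j-1}$ and all these torus-centralisers are finite, is handled by an induction on $j$ anchored at $U$: non-degeneracy of $\pi_k\circ\beta$ on both sides gives $\rk\K\leq\rk W'_k$ and $\rk W_i\leq\rk W'_k$, while $\rk W_i\geq\rk\K$ from $T/C_T(W_i)\hookrightarrow\F_i^{\times}$, and $\rk W'_k=\rk\K$ by the inductive hypothesis on $Y_{j-1}$; so again $\rk W_i=\rk\K$.

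\textbf{The case $j=1$, and the obstacle.} If $V$ is an irreducible $G$-module then $U$ acts non-trivially, so $n\geq2$ and the decomposition applies to $Y_1=Z_1$; its non-central $T$-minimal summands are handled exactly as above through $\beta_2\colon U\times Y_2\to Y_1$ — the non-central pieces of $Y_2$ being $\K$-lines by the previous step — or, equivalently, by pushing the information on the top factor $Y_n$ down to $Z_1$ through the Weyl element $w$ and the Bruhat decomposition, via Lemmas~\ref{l:Bruhat}, \ref{l:noZn} and \ref{l:bracket}; this is precisely why full $G$-invariance, and not merely $B$-invariance, is needed when $j=1$. The main obstacle I expect is the rank bookkeeping in the last sub-case of the previous step: ruling out that the interpreted field $\F_i$ is strictly larger than $\K$ when the commutator $\beta_j(U,W_i)$ misses the $T$-fixed part of $Y_{j-1}$ and every pertinent torus-centraliser is finite. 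There the covariant-map machinery of \S\ref{s:covariant} falls silent, the base case $j=2$ (where $Y_1$ is itself uncontrolled) is the sensitive point, and the induction down the unipotence length towards $U$ must carry the whole argument — which is the technical heart of the proof.
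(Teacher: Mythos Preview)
Your decomposition and your use of the commutation map $\beta_j$ are exactly as in the paper. The gap is precisely where you locate it: the ``remaining sub-case'' in which all the relevant torus-centralisers are finite. Your induction on $j$ cannot start, since for a mere $B$-module nothing is known about $Y_1$; and even granting an anchor, the rank inequalities you claim to extract from ``non-degeneracy on both sides'' of a bi-additive map are unjustified --- non-degeneracy of a bi-additive map yields no such rank comparison in general.

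The paper dissolves this sub-case entirely, and no induction is needed. The missing ingredient is Wagner's theorem on fields of finite Morley rank in positive characteristic \cite{WFields}: the multiplicative group $\L^\times$ of any definable field $\L$ is a good torus, so $T\simeq\K^\times$ (which already contains all prime-to-$p$ torsion) cannot embed \emph{properly} into $\L^\times$. Hence if $\rk L>\rk\K$ one still has $\ker\o\lambda\neq1$; together with the trivial case $\rk L<\rk\K$, this shows that $\rk L\neq\rk\K$ forces $\ker\lambda/\ker\rho$ to be infinite (recall $\ker\rho\leq\ker\kappa$ is finite). Since $L$ is non-central, $\im\lambda$ is infinite as well, and Theorem~\ref{t:threefields} (via the obvious $\K\leftrightarrow\L$ symmetry of its hypotheses) gives $\K\simeq\L$, a contradiction. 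This single argument handles every non-central $T$-minimal piece of $Y_j$ for $j\geq2$ directly, with no appeal to the structure of $Y_{j-1}$ beyond selecting some $T$-minimal subquotient $M$ to land in. For $j=1$ under $G$-irreducibility, the paper simply invokes Lemma~\ref{l:Bruhat} to embed $w\cdot Z_1$ (modulo a finite kernel) into $Y_n$ as a $T$-module, reducing to the case just treated; your route through $\beta_2$ goes the wrong way.
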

\begin{proof}
We already know from Fact \ref{f:semisimple} that $Y_j = C_{Y_j}(T) \oplus [T, Y_j]$, so by Maschke's Theorem (Fact \ref{f:Maschke}) the statement is about $T$-minimal submodules of the latter summand, if any. Let $L \leq [T, Y_j]$ be one such.

Take again the bi-additive, $T$-covariant function $\beta: U \times Y_j \to Y_{j-1}$ from Lemma \ref{l:noZiZi+1}; remember that the right radical is finite. So the restriction $\beta: U\times L \to Y_{j-1}$ is non-zero. Let $M$ be a $T$-minimal subquotient of $Y_{j-1}$ such that $\pi\circ \beta: U \times L \to M$ still remains non-zero.

Now using the action of $T$ on $L$, $M$ respectively and Zilber's Field Theorem, interpret fields $\L, \M$ (possibly $\M = \F_p$ if $T$ centralises $M$), then prepare to apply Theorem \ref{t:threefields}. Observe from the structure of Borel subgroups of $\SL_2(\K)$ that here $\ker \kappa = Z(G)$ has order at most $2$.

Suppose $\rk L \neq \rk \K$.
Notice that $\ker\o \lambda \neq 1$; this is clear if $\rk L < \rk \K$, and a consequence of Wagner's Torus Theorem \cite{WFields} if $\rk L > \rk \K$ as $T \simeq \K^\times$ cannot embed properly into $\L^\times$ in positive characteristic. Hence $\ker\o \lambda$ and $\ker\lambda/\ker \rho$ are infinite. But also $\im\o \lambda \neq 1$ since otherwise $T$ centralises $L$, against its definition. By Theorem \ref{t:threefields}, $\K \simeq \L$, against $\rk \L = \rk L \neq \rk \K$. This shows $\rk L = \rk \K$.

If $V$ is actually an irreducible $G$-module and $j = 1$, use Lemma \ref{l:Bruhat} to embed $w\cdot Z_1$ into $V/Z_{n-1} = Y_n$ as a $T$-module (up to a finite kernel).
\end{proof}

\begin{definition}\label{d:line}
A non-trivial $T$-line is a non-central, $T$-minimal module of same rank as $\K$.
\end{definition}

For instance, it is a consequence of Lemmas \ref{l:noZn} and \ref{l:Tlines} that $Z_n$ contains a non-trivial $T$-line not included in $Z_{n-1}$.

\begin{corollary}\label{c:Tlines}
If $V$ is an irreducible $G$-module, then $[T, V]$ is a direct sum of an even number of non-trivial $T$-lines.
\end{corollary}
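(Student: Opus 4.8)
The plan is to obtain the decomposition of $[T,V]$ into $T$-lines by a direct appeal to Facts \ref{f:semisimple}, \ref{f:Maschke} and Lemma \ref{l:Tlines}, and then to extract the parity of the number of summands from the Weyl element $w$, using Zilber's Field Theorem together with the Artin--Schreier theorem. For the decomposition, recall that $V$, being an irreducible module for $\SL_2(\K)$ in characteristic $p$, has exponent $p$ (Fact \ref{f:char0} disposing of the torsion-free possibility). As $T$ is a torus it is connected, soluble and $p^\perp$, so Fact \ref{f:semisimple} gives $V=C_V(T)\oplus[T,V]$ and $[T,V]\cap Z_j=[T,Z_j]$ for each $j$, whence $C_{[T,V]}(T)=0$. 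Then Fact \ref{f:Maschke} applies to the $T$-module $[T,V]$: it is $T$-semisimple, and of finite Morley rank, hence a finite direct sum $[T,V]=L_1\oplus\dots\oplus L_k$ of $T$-minimal modules, each non-central. Fixing $i$ and taking $m$ least with $L_i\le Z_m$, the module $L_i$ meets $Z_{m-1}$ in a finite subgroup (a proper connected $T$-submodule of the $T$-minimal $L_i$ is trivial), so it embeds, modulo a finite kernel, as a non-central $T$-minimal submodule of $[T,Y_m]$; by Lemma \ref{l:Tlines} the latter is a sum of non-trivial $T$-lines, so $\rk L_i=\rk\K$ and $L_i$ is itself a non-trivial $T$-line. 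This establishes the first half of the statement.

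For the parity, observe that $w\in N_G(T)$ normalises $[T,V]$ and that the resulting additive bijection $w\colon[T,V]\to[T,V]$ satisfies $w(t\cdot x)=t^{-1}\cdot w(x)$. Hence $w$ carries a $T$-submodule $M$ to a $T$-submodule $M^w$ with the inverse-twisted $T$-action, and $(M^w)^w=M$ because $w^2\in T$ twists twice by an element of the abelian group $T$, i.e. trivially. Thus $[M]\mapsto[M^w]$ is an involution on the finite set of isomorphism types of $T$-minimal modules occurring in $[T,V]$, and since $w$ gives a $T$-module isomorphism $[T,V]\cong[T,V]^w$, the multiplicity of $[M]$ equals that of $[M^w]$. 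Consequently, provided this involution has no fixed point, the isomorphism types split into $w$-partners of equal multiplicity and $k$ is even. Everything therefore comes down to showing that no non-trivial $T$-line $M\le[T,V]$ is isomorphic to $M^w$.

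Suppose it were. By Zilber's Field Theorem there is a definable field $\L$ with $M\cong\L_+$, on which $T$ acts through a non-trivial homomorphism $\phi\colon T\to\L^\times$, and --- crucially --- such that the subring of $\End(M)$ generated by the scalars $\phi(T)$ is exactly $\L$. An isomorphism $M\cong M^w$ is a definable additive bijection $\psi$ of $\L_+$ with $\psi(\phi(t)\cdot x)=\phi(t)^{-1}\cdot\psi(x)$ for all $t\in T$ and $x\in\L$. Since $\phi(T)$ is a group, conjugation by $\psi$ permutes the scalars $\phi(T)$, hence preserves the ring they generate, namely $\L$; it therefore induces a definable field automorphism $\sigma$ of $\L$ with $\sigma(s)=s^{-1}$ for every $s$ in the infinite connected subgroup $S=\phi(T)\le\L^\times$. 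But $\sigma\ne\Id$, since otherwise $S$ would have exponent dividing $2$ and thus be finite; and $\sigma^2=\Id$, since $\sigma^2$ fixes the infinite set $S$ pointwise and an infinite definable subfield of $\L$ is all of $\L$ (there are no extensions of infinite definable fields). So $\sigma$ has order exactly $2$, whence $[\L:\mathrm{Fix}(\sigma)]=2$; but by the Artin--Schreier theorem an algebraically closed field possessing a proper subfield of finite index has characteristic $0$, contradicting $\mathrm{char}\,\L=p>0$. Hence no $T$-line is $w$-self-dual, and $k$ is even.

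The main obstacle is the third step, and specifically the passage from the merely abstract module isomorphism $\psi$ to an honest \emph{field} automorphism of $\L$: this is exactly where Zilber's Field Theorem must be invoked in its strong form, pinning down $\L$ as the ring generated by the torus so that anything normalising the torus's scalar action normalises $\L$. After that one only needs the classical Artin--Schreier theorem, and the remaining two steps are routine once one keeps in mind the standing fact that $V$ has exponent $p$, on which every application of Fact \ref{f:semisimple} rests.
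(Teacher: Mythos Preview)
Your proof is correct and follows the paper's approach: decompose $[T,V]$ into non-trivial $T$-lines via Maschke (Fact~\ref{f:Maschke}) and Lemma~\ref{l:Tlines}, then extract parity from the action of $w$ together with the impossibility of a $w$-self-dual $T$-line. The only noticeable difference is cosmetic: you argue via an involution on isomorphism types and invoke Artin--Schreier, whereas the paper speaks directly of $w$-invariant $T$-minimal submodules and cites the absence of definable automorphism groups (equivalently, that the infinite definable subfield $\mathrm{Fix}(\sigma)\le\L$ must equal $\L$) --- both encode the same obstruction.
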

\begin{proof}
Let us refine the non-maximal $B$-series $(Z_j)$ into a composition $T$-series: one gets $T$-central factors and non-trivial $T$-lines. The number of the latter must be even since $w$ normalises $[T, V]$ and a $w$-invariant, $T$-minimal module must be centralised by $T$, as  there are no definable groups of automorphisms of fields of finite Morley rank \cite[Theorem 8.3]{BNGroups}.

The careful reader will observe that one ought to pay more attention to factors in composition series than we just did: since we work in the connected category, uniqueness in a Jordan-H\"older property can hold only up to isogeny. But in the case of connected $T$-modules, this is harmless.
\end{proof}

\begin{remarks*}\
\begin{itemize}
\item
A weaker result had been obtained by Tindzogho Ntsiri \cite[\S5.2.2, Proposition 5.31]{TEtude} or \cite[Theorem 4.6]{TStructure} under the assumption that $\SL_2(\K)$ acts faithfully on $V$.
\item
These results are far from optimal. The main problem is that the centraliser $C_V(T)$ could be ``scattered'' among the various $Y_j$, i.e. $C_{Y_j}(T)$ could be non-zero for several indices, something prohibited in the algebraic setting. This for the moment seems to be the major obstruction to understanding the $B$-structure of $\SL_2(\K)$-modules of arbitrary rank, and we view it as a crucial investigation to lead in the near future.
\end{itemize}
\end{remarks*}

\section{Proof of the Main Theorem}\label{S:Proof}

Let us state our result again.

\begin{theorem*}
Definable in a universe of finite Morley rank, let $\K$ be a field, $G = \SL_2(\K)$, and $V$ be an irreducible $G$-module of rank $\leq 4\rk \K$.
Suppose the characteristic of $\K$ is neither $2$ nor $3$.

Then there is a definable $\K$-vector space structure on $V$ such that $V$, as a $\K[G]$-module, $V$ is definably isomorphic to one of the following:
\begin{itemize}
\item
$\Nat G$, the natural representation;
\item
$\Ad G \simeq \Sym^2 \Nat G$, the adjoint representation;
\item
$\Sym^3 \Nat G$, the rational representation of degree $4$ on $\K[X^3, X^2Y, XY^2, Y^3]$;
\item
$\Nat G \otimes{}^\chi (\Nat G)$, a twist-and-tensor representation for some definable $\chi \in \Aut(\K)$.
\end{itemize}
\end{theorem*}

Let us confess again that we do not know what happens in low characteristic.

\subsection{Notations and Facts}\label{s:notations}

The characteristic $0$ case is covered by Fact \ref{f:char0} from \S\ref{s:conjecture}, so let us turn to positive characteristics.
We keep and extend Notations \ref{n:GUBTw} (see \S\ref{s:applied}); as a matter of fact we need to coordinatise $G = \SL_2(\K)$.

\begin{notation}\label{n:SL2}
For $\lambda \in \K_+$ (resp., $\K^\times$), let:
\[u_\lambda = \begin{pmatrix} 1 & \lambda \\ & 1\end{pmatrix}, \quad t_\lambda = \begin{pmatrix} t_\lambda \\ & t_\lambda^{-1}\end{pmatrix}, \quad w = \begin{pmatrix} & 1\\-1\end{pmatrix}\]
We let $i = t_{-1}$ be the central involution of $G$ (by assumption, $p \neq 2$); of course $w^2 = i$.

Notice that letting $U = \{u_\lambda: \lambda \in \K_+\} \simeq \K_+$ and $T = \{t_\lambda: \lambda \in \K^\times\} \simeq \K^\times$ is consistent with Notation \ref{n:GUBTw}.
Also notice that $t_\mu u_\lambda t_\mu^{-1} = u_{\lambda\mu^2}$.

Finally check that:
\[u_\lambda w u_{\lambda^{-1}} w u_\lambda w = t_\lambda\]
For brevity let $u = u_1$; then the above relation simplifies into $(uw)^3 = 1$.
\end{notation}

\begin{notation}\label{n:V}
Let $V$ be an irreducible $G$-module of rank $\leq 4 \rk \K$. For brevity let $k = \rk (\K)$.
\end{notation}

\begin{notation}\label{n:d}
For $\lambda \in \K$, let $\d_\lambda = \ad_{u_\lambda} : V \to V$ map $v$ to $[u_\lambda, v] = u_\lambda v - v$. For brevity let $\d = \d_1$.
\end{notation}

Hence $\d_\lambda = u_\lambda - 1$ in $\End(V)$. Notice that $t_\mu \d_\lambda t_\mu^{-1} = \d_{\lambda\mu^2}$.

\paragraph{The rank $4k$ analysis starts here.}

The structure of the proof is rather straightforward. We first bound the unipotence length in Proposition \ref{p:length} (\S\ref{s:length}). Small rank cases, where $\rk V \leq 3k$, are known by \cite{DActions} and \cite{CDSmall}; we however give shorter proofs in \S\ref{s:3k}. Then real things begin. We cover $\Sym^3 \Nat G$ in Proposition \ref{p:quartic} (\S\ref{s:quartic}), and then $\Nat G\otimes{}^\chi (\Nat G)$ in Proposition \ref{p:twist} (\S\ref{s:twist}).

We shall rely on a non-model-theoretic investigation of $\Sym^n \Nat G$. In the following, composition series are in the classical sense (no definability nor connectedness required).

\begin{fact}[{\cite[Theorem 1]{TV-III}}]\label{f:theorem1}
Let $V$ be a $\Q[\SL_2(\Z)]$-module. Suppose that for every unipotent element $u \in \SL_2(\Z)$, $(u-1)^5 = 0$ in $\End(V)$. Then $V$ has a composition series each factor of which is a direct sum of copies of $\Q \otimes_\Z \Sym^k \Nat \SL_2(\Z)$ for $k \in \{0, \dots, 4\}$.
\end{fact}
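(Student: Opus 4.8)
The plan is to reduce this to the structure theory of a single finite-dimensional $\Q$-algebra. Every unipotent of $\SL_2(\Z)$ is conjugate to a power of the standard generator $u_1$ (Notation \ref{n:SL2}), and $u_1^k-1$ is a $\Q[u_1]$-multiple of $\d=u_1-1$; so the hypothesis ``$(u-1)^5=0$ for every unipotent $u$'' amounts to the single relation $\d^5=0$ in $\End V$, and $V$ is precisely a module over
\[
A \;=\; \Q[\SL_2(\Z)]\big/\big(\d^5\big),
\]
the quotient by the two-sided ideal generated by $\d^5$ (which automatically kills $(v-1)^5$ for $v$ any conjugate of a power of $u_1$, i.e.\ any unipotent). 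Everything then reduces to two claims: (i) $A$ is \emph{finite-dimensional} over $\Q$; and (ii) the simple $A$-modules are exactly $\Q\otimes_\Z\Sym^k\Nat\SL_2(\Z)$ for $k\in\{0,\dots,4\}$.

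Granting (i) and (ii) the conclusion is formal. The Jacobson radical $\mathfrak r$ of the finite-dimensional algebra $A$ is nilpotent, say $\mathfrak r^{\,m}=0$; for any $A$-module $V$ the finite chain $V\supseteq\mathfrak rV\supseteq\mathfrak r^2V\supseteq\cdots\supseteq\mathfrak r^mV=0$ has successive quotients which are $A/\mathfrak r$-modules, hence semisimple, hence by (ii) direct sums of copies of the $\Q\otimes_\Z\Sym^k\Nat$ with $k\le 4$; this is the asserted series (refine each layer into simple constituents if a genuine composition series is wanted). Note that no finiteness hypothesis on $V$ is used.

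Thus the whole weight of the theorem sits on (i) together with (ii) --- this is the ``short length miracle'' of \S\ref{s:methods}, and it is where I expect the real difficulty to lie. I would attack it through the presentation $\SL_2(\Z)\cong C_4\ast_{C_2}C_6$, generated by the element $w$ of order $4$ (Notation \ref{n:SL2}) and the order-$6$ element $r=w^{-1}u_1$ subject to $w^4=1$, $r^3=w^2$, with $w^2$ central; so $u_1=wr$ and $\d=wr-1$ with $\d^5=0$ in $A$. In characteristic zero $w$ and $r$ are semisimple, with eigenvalues among the fourth and sixth roots of unity, so one first decomposes $A$ along the finitely many spectral projectors of $w$ and of $r$, then substitutes $u_1=wr$ into $\d^5=0$ and reduces modulo $w^4=1$, $r^6=1$, $r^3=w^2$. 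The ``unpleasant computation'' is to show that the resulting identities leave only finitely many monomials in these data --- equivalently, that the self-extensions among the candidate modules which survive the truncation $\d^5=0$ cannot be iterated without end --- and, in the same stroke, that an irreducible $A$-module is pinned down by the $w,r$-type of a $\d$-highest vector, leaving exactly $\Sym^0,\dots,\Sym^4$. This step is genuinely arithmetic and the exponent $5$ is sharp (\cite[Proposition 2]{TV-III}): there is no soft Lie-theoretic shortcut, since passing to the $\mathfrak{sl}_2$-triple $\log u_1=\d-\tfrac{\d^2}{2}+\cdots$ and its $w$-conjugate would force complete reducibility, contradicting e.g.\ the non-split Eisenstein extension $0\to\Sym^2\to E\to\Sym^0\to 0$, which itself satisfies $\d^5=0$.
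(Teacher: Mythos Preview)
The paper does not prove this statement: Fact~\ref{f:theorem1} is quoted from \cite{TV-III} as an external input, with no argument reproduced beyond the remark that the proof there is ``highly computational'' and organised case-by-case according to the unipotence length and the sign of the central involution (the paper cites the cases ``$n=4$, $i=\pm 1$'' at \cite[\S1.2.7--1.2.8]{TV-III} when it needs fragments of the computation). So there is no in-paper proof to compare against; I can only assess your proposal on its own merits and against what the paper says about the argument in \cite{TV-III}.

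Your reduction is correct and clarifying: $\Q[\SL_2(\Z)]$-modules with $(u-1)^5=0$ for every unipotent $u$ are exactly modules over $A=\Q[\SL_2(\Z)]/(\d^5)$, and if (i) $A$ is finite-dimensional and (ii) its simples are $\Sym^0,\dots,\Sym^4$, then the radical filtration of any $A$-module yields the desired series. (Strictly, you need only that the Jacobson radical of $A$ is nilpotent and that $A/\mathfrak r$ is the expected product of five matrix algebras; finite-dimensionality is sufficient but a priori a shade stronger.) Your remark on the Eisenstein extension is also well taken: it shows that no shortcut through an $\mathfrak{sl}_2$-triple and complete reducibility is available.

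But this is a reformulation, not a proof. Claims (i) and (ii) together \emph{are} the theorem, and you establish neither. Your sketch for (i)---spectrally decompose along $w$ and $r$ in the amalgam presentation $C_4\ast_{C_2}C_6$, then feed $(wr-1)^5=0$ through the relations---describes where the computation would have to take place rather than carrying it out; you yourself flag it as the ``unpleasant computation''. Likewise, ``an irreducible $A$-module is pinned down by the $w,r$-type of a $\d$-highest vector'' is a hope, not an argument: a vector killed by $\d$ has no reason to be a $w$- or $r$-eigenvector. As far as the paper indicates, \cite{TV-III} does not first prove that $A$ is finite-dimensional; it computes directly inside an arbitrary module, using $(uw)^3=1$ and the central involution to force the structure length-by-length. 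Your repackaging is a cleaner target statement, but the substantive content---that no unbounded tower of extensions among $\Sym^0,\dots,\Sym^4$ survives the constraint $\d^5=0$---is identical either way, and remains undone.
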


Although Fact \ref{f:theorem1} is stated for $\SL_2(\Z)$-modules, its highly computational proof readily adapts to $\F_p[\SL_2(\F_p)]$-modules, provided $p$ is greater than the unipotence length; incidently the author does not know whether a character-theoretic proof exists in that case.
One should however refrain from using Fact \ref{f:theorem1}.
As indicated by \cite[Proposition 2]{TV-III}, there are deep geometric reasons which prevent the computation from extending to length $\geq 7$, at least for $\Q[\SL_2(\Z)]$-modules. So we are dissatisfied with this result, albeit next-to-optimal.

A result we are satisfied with is the following.

\begin{fact}[{\cite[Theorem 2]{TV-III}}]\label{f:theorem2}
Let $n \geq 2$ be an integer and $\K$ be a field of characteristic $0$ or $\geq 2n+1$. Suppose that $\K$ is $2(n-1)!!$-radically closed. Let $G = \SL_2(\K)$ and $V$ be a $G$-module. Let $\K_1$ be the prime subfield and $G_1 = \SL_2(\K_1)$. Suppose that $V$ is a $\K_1$-vector space such that $V \simeq \oplus_I \Sym^{n-1} \Nat G_1$ as $\K_1[G_1]$-modules.

Then $V$ bears a compatible $\K$-vector space structure for which one has $V \simeq \oplus_J \Sym^{n-1} \Nat G$ as $\K[G]$-modules.
\end{fact}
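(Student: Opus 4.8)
The plan is to reconstruct the $\K$-action by hand from the root subgroups of $G=\SL_2(\K)$: first linearise the unipotent action by taking logarithms, then recognise field multiplication inside the resulting nilpotent operators, and finally recognise $V$ as a sum of symmetric powers. Write $U^-=wUw^{-1}$ with $u^-_\lambda:=wu_\lambda w^{-1}$ for the opposite unipotent subgroup. The characteristic hypothesis (either $0$ or $\geq 2n+1$) is present precisely so that every unipotent element we meet has nilpotence index $\leq n$ strictly below the characteristic; and I would use the radical hypothesis already at the outset through the trivial remark that $2\mid 2(n-1)!!$, so that \emph{every element of $\K$ is a square}.

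First I would control the root elements uniformly. On each $G_1$-summand $\Sym^{n-1}\Nat G_1$ the map $\d_\lambda=u_\lambda-1$ is nilpotent of index $\leq n$ for $\lambda\in\K_1$; since every $\lambda\in\K$ is a square $\nu^2$ and $t_\nu u_1 t_\nu^{-1}=u_{\nu^2}=u_\lambda$ (Notation \ref{n:SL2}), each $u_\lambda$ is $T$-conjugate to $u_1$, hence unipotent of the same index on all of $V$. As the characteristic exceeds $n$, the logarithm $N_\lambda:=\log u_\lambda=\sum_{k\geq 1}\frac{(-1)^{k-1}}{k}\d_\lambda^{\,k}$ is a well-defined nilpotent $\K_1$-linear operator. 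Because the $u_\lambda$ commute, $\lambda\mapsto N_\lambda$ is \emph{additive}, and conjugation yields the equivariance $\Ad(t_\mu)N_\lambda=N_{\mu^2\lambda}$. I would set $e:=N_1$ and record the opposite operators $f_\lambda:=\log u^-_\lambda$, $f:=f_1$, with $\Ad(t_\mu)f_\lambda=f_{\mu^{-2}\lambda}$.

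Next comes the weight bookkeeping. Restricting to the prime torus $T_1=\{t_\mu:\mu\in\K_1^\times\}$ and using the bound $\geq 2n+1$ to separate the weights $n-1,n-3,\dots,-(n-1)$, one gets $V=\bigoplus_{i=0}^{n-1}V^{(i)}$, where $V^{(i)}$ is the $T_1$-weight space of weight $n-1-2i$. For $\lambda\in\K_1$ one has $N_\lambda=\lambda e$ on the nose (from $u_\lambda=\exp(\lambda e)$ on $\Sym^{n-1}$), and $e$ raises the weight by exactly $2$, restricting to $\K_1$-isomorphisms $V^{(i)}\xrightarrow{\sim}V^{(i-1)}$ for $1\leq i\leq n-1$. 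I would upgrade weight-homogeneity to \emph{all} $\lambda\in\K$ by writing $\lambda=\nu^2$ and $N_\lambda=\Ad(t_\nu)(e)$, observing that $t_\nu\in T$ commutes with $T_1$ and hence preserves the grading of $\End V$ by $T_1$-weight shift; thus $N_\lambda(V^{(i)})\subseteq V^{(i-1)}$ and, symmetrically, $f_\lambda(V^{(i)})\subseteq V^{(i+1)}$. I would then single out the lowest weight space $V^{(n-1)}$ and define the candidate scalar action $m(\lambda):=\tfrac{1}{n-1}\,fN_\lambda\big|_{V^{(n-1)}}\in\End_{\K_1}(V^{(n-1)})$. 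A one-line computation of $fe$ on a lowest weight vector (via $[e,f]$ acting as the weight) gives $fe=(n-1)\Id$ there, so $m(\lambda)=\lambda\,\Id$ for $\lambda\in\K_1$; hence $m$ is additive and extends the prime-field scalars.

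The whole difficulty is concentrated in proving \emph{multiplicativity} $m(\lambda\lambda')=m(\lambda)m(\lambda')$. Expanding $m(\lambda)m(\lambda')$ with $\lambda=\nu^2,\ \lambda'=\nu'^2$ and $N_\lambda=\Ad(t_\nu)(e)$ produces a cross-term of the shape $ef_{\nu^{-2}}$, and one must run the Chevalley commutator relations between the root-group logarithms $N_\bullet,f_\bullet$ and the toral logarithm to collapse it back to $N_{\lambda\lambda'}$. This is exactly the unpleasant computation imported from the $\SL_2(\Z)$-analysis of \cite{TV-III}; it is where the bound $\geq 2n+1$ (to invert every structure constant of size $\leq 2n-1$) and the $2(n-1)!!$-radical-closedness (to extract the roots that rescale the weight-space identifications so the recovered constants are the standard integer ones of $\Sym^{n-1}$, rather than a rescaling or twist) are consumed. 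I expect this to be the main obstacle by a wide margin; everything else is bookkeeping. Once $m$ is a nonzero ring homomorphism out of the field $\K$ it is injective, making $V^{(n-1)}$ a $\K$-vector space refining its $\K_1$-structure; I would then propagate this structure to every $V^{(i)}$ through the isomorphisms $e^{\,i}$ (normalising by a suitable $2(n-1)!!$-th root so the highest- and lowest-weight ends match symmetrically), assemble the $V^{(i)}$ into one compatible $\K$-structure on $V$, and check that $u_\lambda$, $u^-_\lambda$ and $t_\mu$ then act $\K$-linearly by the standard symmetric-power formulas. This exhibits $V$ as a direct sum of $\K$-spans of lowest weight vectors, each a copy of $\Sym^{n-1}\Nat G$, giving $V\simeq\bigoplus_J\Sym^{n-1}\Nat G$ as $\K[G]$-modules.
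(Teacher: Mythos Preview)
The paper does not prove this statement: Fact~\ref{f:theorem2} is quoted verbatim from \cite[Theorem~2]{TV-III} as an external input, followed only by the one-line remark that ``the linear structure is definable over $G$ by construction''. There is therefore no in-paper argument to compare your attempt against; the paper treats the result as a black box and invokes it (together with Fact~\ref{f:theorem1}) at the end of the rank~$3k$ and quartic analyses.

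As a standalone sketch your outline has the right shape --- linearising the unipotent action via logarithms, decomposing along $T_1$-weights, and reading a candidate scalar action from $fN_\lambda$ on the lowest weight space --- and the bookkeeping claims (nilpotence of $\d_\lambda$ by torus-conjugation to $\d_1$, additivity of $\lambda\mapsto N_\lambda$, preservation of weight spaces by $T$) are all correct. The gap is that your treatment of multiplicativity $m(\lambda\lambda')=m(\lambda)m(\lambda')$ is not an argument but a deferral: you write that the cross-term ``is exactly the unpleasant computation imported from the $\SL_2(\Z)$-analysis of \cite{TV-III}'', which is circular for a proof of a theorem \emph{from} \cite{TV-III}. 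That identity is the entire content of the result; saying that it ``collapses back to $N_{\lambda\lambda'}$'' via Chevalley commutators is a hope, not a computation. You would need to display the concrete relations among $N_\lambda$, $f_\mu$ and the toral operators that close up, and to pinpoint exactly where the $2(n-1)!!$-radical hypothesis is consumed beyond supplying square roots (your parenthetical about ``rescaling weight-space identifications'' is suggestive but not a mechanism). As written, the proposal is a plausible scaffold with its load-bearing step missing.
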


Moreover, the linear structure is definable over $G$ by construction.

\subsection{Bounding the Length}\label{s:length}

\begin{proposition}\label{p:length}
$V$ as in Notation \ref{n:V} has $U$-length at most $4$.
\end{proposition}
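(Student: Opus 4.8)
The plan is to bound the $U$-length of $V$ by combining the rank restriction $\rk V \leq 4k$ with the structural information about $B$-modules from Section~\ref{s:applied}, and then to invoke the nilpotence of $U \ltimes V$ (Fact~\ref{f:UVnilpotent}) together with the behaviour of the commutator maps $\beta: U \times Y_j \to Y_{j-1}$.

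First I would set $n = \ell_U(V)$ and examine the factors $Y_1, \dots, Y_n$ of the $B$-series $(Z_j)$ from Notation~\ref{n:Zi}. By Lemma~\ref{l:Tlines}, each $Y_j$ is a direct sum of $C_{Y_j}(T)$ and non-trivial $T$-lines, each of rank exactly $k$; by Lemma~\ref{l:noZiZi+1}, for $2 \leq j \leq n$ the torus $T$ cannot centralise both $Y_{j-1}$ and $Y_j$, so in particular $Y_j$ contains at least one non-trivial $T$-line whenever $Y_{j-1}$ is $T$-central, and more generally consecutive factors cannot both be purely $T$-central. By Lemma~\ref{l:noZn}, $T$ does not centralise the top factor $Y_n$, so $Y_n$ contains a $T$-line. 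The naive count then already forces each pair of consecutive factors to contribute at least one line of rank $k$, giving roughly $\lfloor n/2 \rfloor \cdot k \leq \rk V \leq 4k$, hence $n \leq 8$ or so — not yet good enough, so the argument must be sharpened.

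To get down to $n \leq 4$ I would push the commutator map harder. The key point is that the bi-additive $T$-covariant map $\beta_j: U \times Y_j \to Y_{j-1}$ has finite right radical (shown in the proof of Lemma~\ref{l:noZiZi+1}), so it is ``non-degenerate enough'' that a non-trivial $T$-line in $Y_j$ must pair non-trivially into $Y_{j-1}$; feeding the relevant $T$-line in $Y_j$, a $T$-minimal subquotient $M$ of $Y_{j-1}$ on which $\pi \circ \beta_j$ is still non-zero, and the action of $T \simeq \K^\times$ into Theorem~\ref{t:threefields} (exactly as in the proof of Lemma~\ref{l:Tlines}), one learns that the interpreted field on $M$ is isomorphic to $\K$, so $M$ is itself (up to isogeny) a $T$-line of rank $k$ sitting inside $Y_{j-1}$. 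Iterating this downward, a non-trivial $T$-line high up in the series forces a whole descending chain of genuine rank-$k$ $T$-lines, one in each lower factor down to $Y_1$ (using Lemma~\ref{l:Bruhat}/the Bruhat-decomposition trick at the bottom, as in Lemma~\ref{l:Tlines}, to handle $j=1$ in the irreducible case). Combined with Corollary~\ref{c:Tlines} (so the total number of $T$-lines in $[T,V]$ is even) and $\rk[T,V] \leq \rk V \leq 4k$, the number of $T$-lines is at most $4$; since the chain produced from $Y_n$ has length $\geq \lceil n/? \rceil$ genuine lines, this caps $n$.

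The main obstacle I expect is precisely the ``scattering'' phenomenon flagged in the final Remark of \S\ref{s:applied}: a priori $C_{Y_j}(T)$ can be non-zero for several indices $j$, so the factors are not cleanly ``one line per level'', and one has to argue carefully that $T$-central pieces cannot accumulate without forcing extra $T$-lines elsewhere (via Lemma~\ref{l:noZiZi+1}, which only forbids \emph{two consecutive} central factors). Turning the qualitative statements of Lemmas~\ref{l:noZiZi+1}, \ref{l:noZn}, \ref{l:Tlines} and Corollary~\ref{c:Tlines} into a tight numerical bound — rather than the crude $n \leq 8$ — is the delicate bookkeeping step, and is where I would expect the proof to spend most of its effort; the rank-$4k$ hypothesis is what makes it close, since it leaves essentially no slack once one knows every relevant factor beyond $Y_1$ carries a rank-$k$ line and the count is even.
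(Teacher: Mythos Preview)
Your descending-chain argument is the gap. You claim that feeding a non-trivial $T$-line $L \leq Y_j$ and a $T$-minimal subquotient $M$ of $Y_{j-1}$ into Theorem~\ref{t:threefields} forces $M$ to be a $T$-line of rank $k$. It does not. In the setup of Lemma~\ref{l:Tlines}, $\ker\kappa = Z(G)$ is finite, so the only clauses of Theorem~\ref{t:threefields} that can fire are those governed by $\ker\mu$ and $\im\mu$; if $T$ happens to centralise $M$ then $\im\mu$ is trivial and the theorem gives you $\K\simeq\L$ (which is what Lemma~\ref{l:Tlines} actually uses) but says nothing about $\M$. Concretely, in $\Sym^2\Nat G$ the $U$-length is $3$ with $Y_1, Y_3$ non-trivial $T$-lines and $Y_2$ entirely $T$-central: the line in $Y_3$ commutes down into a $T$-central factor, and no chain of lines is produced. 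So your mechanism for tightening $n\leq 8$ to $n\leq 4$ does not work.

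The paper's proof is much shorter and uses only the lemmas you already listed. By Corollary~\ref{c:Tlines} and $\rk V\leq 4k$ there are either two or four non-trivial $T$-lines. If there are four, then $\rk[T,V]=4k$ forces $C_V(T)=0$, so by Fact~\ref{f:semisimple} every $Y_j$ has $C_{Y_j}(T)=0$ and hence contains at least one line of rank $k$; thus $n\leq 4$. If there are only two, Lemma~\ref{l:noZn} puts one of them in $Y_n$. The other cannot sit above $Y_2$ (else $Y_1,Y_2$ are both $T$-central, contradicting Lemma~\ref{l:noZiZi+1}) and cannot sit below $Y_{n-2}$ (else $Y_{n-2},Y_{n-1}$ are both $T$-central, same contradiction). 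Hence the second line lives in $Y_1$ or $Y_2$ and simultaneously in $Y_{n-2}$, $Y_{n-1}$ or $Y_n$, which forces $n\leq 4$, the extremal pattern being lines in $Y_2$ and $Y_4$. The ``scattering'' you worried about is handled simply by this two-sided squeeze, not by any chain argument.
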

\begin{proof}
If there are four non-trivial $T$-lines (see Definition \ref{d:line}) we are done for rank reasons. So suppose there are only two. By Lemma \ref{l:noZn} at least one occurs in the final factor $Y_n$; by Lemma \ref{l:noZiZi+1} the other one cannot occur more than two steps afar, and cannot be more than two steps away from $Z_0$. The worst case is therefore where the two lines occur at stages $2$ and $4$: $\ell_U(V) \leq 4$.
\end{proof}

\subsection{Warming up: Shorter Proofs in Rank 3k}\label{s:3k}

Since one can also measure technical progress in a topic by the ability to produce shorter proofs of existing theorems, we shall now revisit two earlier results.

\begin{fact}[rank $2k$ analysis: {\cite[Theorem B]{DActions}}]\label{f:2k}
If $V$ as in Notation \ref{n:V} has rank $\leq 2k$, then there is a definable $\K$-vector space structure such that $V\simeq \Nat G$.
\end{fact}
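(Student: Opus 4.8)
The plan is to read off, from the abstract machinery of \S\ref{S:Tools}, the entire $B$-module structure of $V$, then split $V$ with the Weyl element $w$ and transport the field $\K$ from the group onto $V$.

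\emph{Step 1 (the $B$-structure).} Since the action is faithful and $G=\langle U,U^-\rangle$ (with $U^-=wUw^{-1}$ the opposite unipotent), $G$ cannot centralise $V$, so $[T,V]\neq 0$; by Corollary \ref{c:Tlines}, $[T,V]$ is a direct sum of an even number of non-trivial $T$-lines, each of rank $k$ by Lemma \ref{l:Tlines}. As $\rk V\leq 2k$ there are exactly two of them, and $C_V(T)$ is finite, hence trivial, so $V=[T,V]$. By Fact \ref{f:semisimple} this forces $C_{Y_j}(T)=0$ for every $j$, so by Lemma \ref{l:Tlines} (for $j=1$ we use that $V$ is $G$-irreducible) each of the $n$ strictly increasing factors $Y_1,\dots,Y_n$ is a nonzero direct sum of non-trivial $T$-lines. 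Since only two lines are available, $n\leq 2$; and $n=1$ would make $\langle U,U^-\rangle=G$ centralise $V$, so $n=2$ with $Z_1=C_V(U)^{\circ}$ and $V/Z_1$ each a single $T$-line. Then $wZ_1=C_V(U^-)^{\circ}$ is a third $T$-line; if $wZ_1=Z_1$ it would be invariant under $\langle B,w\rangle=G$, contradicting irreducibility, so $Z_1\cap wZ_1$ is a proper, hence finite, hence trivial subgroup of the $T$-minimal $Z_1$, and for rank reasons $V=Z_1\oplus wZ_1$.

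\emph{Step 2 (recovering $\K$ and finishing).} Interpret a definable field $\L$ from the $T$-line $Z_1$ via Zilber's Field Theorem, so $Z_1\simeq\L_+$ with $T$ acting through an embedding into $\L^\times$; by Lemma \ref{l:Tlines}, $\rk\L=k$. The point is to identify $\L$ definably with $\K$. The tool is the bi-additive, $T$-covariant commutation map $\beta:U\times wZ_1\to Z_1$, $(u_\lambda,z)\mapsto[u_\lambda,z]$, whose left radical is trivial (faithfulness) and whose right radical is trivial ($Z_1\cap wZ_1=0$). Using Wagner's Torus Theorem as in the proof of Lemma \ref{l:Tlines}, the embedding $T\simeq\K^\times\hookrightarrow\L^\times$ is onto; pushing $\K$'s multiplication through $\beta$ — and through the analogous map for $U^-$, the two being tied together by the relation $(uw)^3=1$ of Notation \ref{n:SL2} — one gets a definable isomorphism $\L\simeq\K$, and likewise for $wZ_1$. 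With a common definable $\K$-action on both summands, pick $e_1\in Z_1$, $e_2\in wZ_1$ with $[u_1,e_2]=e_1$ and declare $\{e_1,e_2\}$ a $\K$-basis; the relations of Notation \ref{n:SL2} ($t_\mu u_\lambda t_\mu^{-1}=u_{\lambda\mu^2}$, $u_\lambda w u_{\lambda^{-1}} w u_\lambda w=t_\lambda$, $w^2=i=t_{-1}$, and $i$ acting by $-1$ as both summands are non-central $T$-lines) then show that $G$ acts on $V$ exactly as on $\Nat G$.

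\emph{Main obstacle.} The real content is the identification $\L\simeq\K$ in Step 2. Observe that here Theorem \ref{t:threefields} does \emph{not} apply: in rank $2k$ all the torus homomorphisms into $\K^\times,\L^\times$ have finite kernel, so none of its hypotheses on the quotients $\ker/\ker\rho$ is satisfied. One is thus thrown back onto a more hands-on study of $\beta$ together with Wagner's Theorem — recovering, by structural rather than generic means (contrast \cite{DActions}), that the interpreted field is the original one. Everything else is routine bookkeeping with the applied tools of \S\ref{s:applied}.
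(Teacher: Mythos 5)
Your Step 1 is correct and, modulo verbosity, is exactly the paper's opening paragraph: two non-trivial $T$-lines, $C_V(T)=0$, $U$-length $2$, and $V=Z_1\oplus w\cdot Z_1$ (the paper gets the directness more cheaply from $Z_1\cap w\cdot Z_1\leq C_V(U,wUw^{-1})=C_V(G)\leq C_V(T)=0$). The problem is Step 2, where you have correctly located the difficulty --- Theorem \ref{t:threefields} is indeed useless here because all the relevant kernels are finite --- but not resolved it. ``Pushing $\K$'s multiplication through $\beta$'' and invoking $(uw)^3=1$ is not an argument; it is a promissory note for the one computation that constitutes the proof. Concretely, what must be shown is that for $a\in Z_1$ one has $[u_\lambda, wa]=i\,t_\lambda a$, i.e.\ that the \emph{additive} parametrisation of $U$ by $\K$ and the \emph{multiplicative} parametrisation of $T$ match up on $Z_1$. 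The paper obtains this by setting $b=[u,wa]\in Z_1$ and $c=[u,wb]\in Z_1$, expanding $(uw)^2\cdot a=(uw)^{-1}\cdot a$, and reading off $b=ia$ from $Z_1\cap w\cdot Z_1=0$; conjugating by $t_\lambda$ gives $[u_{\lambda^2},wa]=i\,t_{\lambda^2}a$, and surjectivity of squaring in the algebraically closed $\K$ removes the square. Only then can one \emph{define} $\lambda\cdot a:=t_\lambda a$ and get additivity in $\lambda$ for free from bi-additivity of the commutator, since $t_{\lambda+\mu}a=i[u_{\lambda+\mu},wa]=i([u_\lambda,wa]+[u_\mu,wa])$ because $U$ centralises $[U,V]\leq Z_1$.

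Two further remarks. First, the paper never interprets an auxiliary field $\L$ on $Z_1$ via Zilber's Field Theorem; that detour buys nothing, because an abstract definable isomorphism $\L\simeq\K$ (even granted) would still leave open whether $T$ acts on $Z_1$ through the identity or through a definable twist $\varphi\in\Aut(\K)$ --- exactly the ambiguity that declaring $\lambda\cdot a:=t_\lambda a$ and then verifying additivity in $\lambda$ is designed to kill. Second, your closing claim that the relations of Notation \ref{n:SL2} ``then show that $G$ acts exactly as on $\Nat G$'' again presupposes the identity $[u_\lambda,wa]=i\,t_\lambda a$. So the skeleton is right and the obstacle is correctly named, but the load-bearing computation is missing, and without it neither the additivity of the scalar action nor the absence of a twist is established.
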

The proof will appear only marginally shorter than the original argument, but it is self-contained as opposed to \cite{DActions} which relied on work by Timmesfeld \cite[Theorem 3.4]{TAbstract}. (From the latter we extract only the core computation, the group $\SL_2(\K)$ being already identified and coordinatised.)
\begin{proof}
For rank reasons, it follows from Corollary \ref{c:Tlines} that $[T, V]$ is the direct sum of exactly two non-trivial $T$-lines. As a consequence, $V = Z_2$ has $U$-length $2$ and $C_V(T) = 0$.

Now let $a \in Z_1$; with a look at Notation \ref{n:d}, set $b = \d w a = [u, wa] \in [U, V] \leq Z_1$, and $c = \d wb \in Z_1$. One finds:
\begin{align*}
(uw)^2 \cdot a & = uw (wa + b) = ia + wb + c\\
= (uw)^{-1} \cdot a & = iw u^{-1} a = iw a
\end{align*}
so $w(ia - b) = ia + c \in Z_1 \cap w \cdot Z_1 \leq C_V(U, wUw^{-1}) = C_V(G) \leq C_V(T) = 0$.
Hence $b = ia$.

Now for arbitrary $\lambda \in \K^\times$, one still has $t_\lambda a \in Z_1$ so:
\[u_{\lambda^2} w a = t_\lambda u t_{\lambda}^{-1} w a = t_\lambda u w t_\lambda a = t_\lambda (w t_\lambda a + i t_\lambda a) = wa + i t_{\lambda^2} a\]
and since $\K$ is algebraically closed, one finds $u_\lambda wa = w a + i t_\lambda a$.

We may now define the vector space structure. On $Z_1$ let $\lambda \cdot a = t_\lambda a$; on $w\cdot Z_1$ let $\lambda \cdot (wa) = w (\lambda \cdot a)$.
This is obviously additive in $a$ and multiplicative in $\lambda$; moreover, since $U$ centralises $[U, V] \leq Z_1$,
\begin{align*}
(\lambda + \mu) \cdot a & = t_{\lambda + \mu} a = i [u_{\lambda + \mu}, wa] = i (u_\lambda \cdot [u_\mu, wa] + [u_\lambda, wa]) = i^2 (t_\mu a + t_\lambda a)\\
& = \lambda \cdot a + \mu \cdot a
\end{align*}
so this does define a linear structure on $Z_1 \oplus w \cdot Z_1$. (We already observed that the sum is direct indeed.)

We claim that $Z_1 \oplus w \cdot Z_1$ is then a $\K[G]$-module. First observe that it is $\<w, U\> = G$-invariant alright. By construction $w$ acts linearly. Linearity of $U$ is obvious on $Z_1$. Now for $(\lambda, \mu) \in \K^2$ and $a \in Z_1$, one has:
\begin{align*}
u_\mu (\lambda \cdot wa) & = u_\mu w(\lambda \cdot a) = w (\lambda \cdot a) + i t_\mu t_\lambda a = \lambda \cdot wa + \lambda \cdot (it_\mu a) = \lambda \cdot u_\mu a
\end{align*}
which proves linearity of $U$ on $w \cdot Z_1$ too.

By irreducibility, $V = Z_1 \oplus w\cdot Z_1$ which is clearly isomorphic to $\Nat G$.
\end{proof}

\begin{remarks*}\label{r:quadratic}\
\begin{itemize}
\item
The argument applies to an irreducible $G$-module of $U$-length exactly $2$ such that $C_V(T) = 0$, without mentioning the rank nor even its finiteness: this is a special case of Facts \ref{f:theorem1} and \ref{f:theorem2}.
\item
In the case of an irreducible $G$-module of $U$-length exactly $2$ such that $C_V(T) = 0$, there are no restrictions on the characteristic, which may be $0$, $2$ or $3$ in the above computation.
\item
In order to remove from the latter the $C_V(T) = 0$ assumption in a finite Morley rank setting, one can follow \cite[Claim 5.7]{DActions} or its generalisation to $\SL_n(\K)$ \cite[Proposition 2.1, Claim 1]{BDRank3}.
\end{itemize}
\end{remarks*}

\begin{fact}[rank $3k$ analysis: {\cite{CDSmall}}]\label{f:3k}
If $V$ as in Notation \ref{n:V} has rank $\leq 3k$, then there is a definable $\K$-vector space structure such that either $V \simeq \Nat G$ or $V \simeq \Sym^2 \Nat G$ (the adjoint representation).
\end{fact}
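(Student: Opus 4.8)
I would organise the argument as follows. The characteristic~$0$ case is Fact~\ref{f:char0}, so I assume $\K$ has positive characteristic; being algebraically closed it then has characteristic $p\geq 5$. I may also assume that $T$ does not centralise $V$, for otherwise $V$ would be centralised by $\langle T^{g}:g\in G\rangle=G$, against irreducibility of the infinite module. Since each non-trivial $T$-line has rank $k$ by Lemma~\ref{l:Tlines}, Corollary~\ref{c:Tlines} together with $\rk V\leq 3k$ forces $[T,V]$ to be a direct sum of \emph{exactly two} non-trivial $T$-lines, necessarily interchanged by~$w$ (a $w$-invariant $T$-minimal module would be $T$-central, as there are no definable field automorphisms). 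Hence $\rk[T,V]=2k$ and $\rk C_V(T)\leq k$.

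The next step is to pin down the $U$-length. First I would show $C_V(B)\o=0$: were $A:=C_V(B)\o\neq 0$ we would have $A\leq C_V(U)\o=Z_1$, while $w\cdot A=C_V(B^{w})\o$ is $T$-central and, by Lemma~\ref{l:Bruhat}, embeds up to a finite kernel into the top factor $Y_{\ell_U(V)}$, hence into $C_{Y_{\ell_U(V)}}(T)$; a rank count against the shape forced by Lemmas~\ref{l:noZn} and~\ref{l:noZiZi+1} and by irreducibility then yields a contradiction. In particular $Z_1=C_V(U)\o$ is itself a single non-trivial $T$-line. Writing $n_j$ for the number of non-trivial $T$-lines occurring in the factor $Y_j$, Proposition~\ref{p:length} gives $\ell_U(V)\leq 4$, and we have $\sum_j n_j=2$, $n_1\geq 1$, $n_{\ell_U(V)}\geq 1$ (Lemma~\ref{l:noZn}), and no two consecutive $n_j$ both vanish (Lemma~\ref{l:noZiZi+1}); an elementary inspection leaves only $\ell_U(V)=2$ with $(n_1,n_2)=(1,1)$, or $\ell_U(V)=3$ with $(n_1,n_2,n_3)=(1,0,1)$.

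If $\ell_U(V)=2$, then $Z_1$ and $w\cdot Z_1$ are distinct non-trivial $T$-lines whose sum is $[T,V]$, and by Lemma~\ref{l:Bruhat} the image of $w\cdot Z_1$ in $Y_2=V/Z_1$ is, up to finite, the line of $Y_2$; only a spurious $T$-central summand of $Y_2$ remains to be killed, which I would do exactly as in \cite[Claim~5.7]{DActions} (or its $\SL_n$ analogue in \cite{BDRank3}), reducing to $C_V(T)=0$. One then reruns the computation of Fact~\ref{f:2k} --- or simply quotes that fact --- to obtain a definable $\K$-structure with $V\simeq\Nat G$.

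The case $\ell_U(V)=3$ is the heart of the matter: here $Z_1$ is the ``highest'' $T$-line, $Y_2=Z_2/Z_1$ is $T$-central, $Y_3=V/Z_2$ carries the ``lowest'' line, and we expect $V\simeq\Sym^2\Nat G=\Ad G$ with weights $2,0,-2$. Two bi-additive, plus-or-minus-$T$-covariant maps link the factors: the commutator $\beta\colon U\times Y_3\to Y_2$ of Lemma~\ref{l:noZiZi+1}, and --- since $[U,V]\leq Z_2$ --- the ``descent'' map $\widetilde\psi\colon U\times Z_1\to Y_2$, $(u_\lambda,a)\mapsto \d_\lambda(w\cdot a)+Z_1$, which is additive in each variable because $\d_\lambda$ maps $Z_2$ into $Z_1$; both have finite right radical (for $\widetilde\psi$, a non-trivial one would push the line $w\cdot Z_1$ into $Z_2$, whose only line is $Z_1$). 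Restricting to $T$-minimal pieces and applying Theorem~\ref{t:threefields} --- with $\kappa\colon t_\mu\mapsto \mu^{2}$ surjective onto $\K^{\times}$ and the middle field trivial --- identifies the interpreted fields of $Z_1$ and of the line of $Y_3$ with $\K$, compatibly with the torus, so that these lines have weights $2$ and $-2$; moreover $\widetilde\psi$ forces $\rk Y_2\geq \rk Z_1=k$, whence $\rk Y_2=k$, $\rk Y_3=k$ and $\rk V=3k$, and $V=Z_1\oplus L_2\oplus(w\cdot Z_1)$ up to finite, with $L_2$ a $T$-complement of $Z_1$ in $Z_2$ and each summand of rank~$k$. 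It then remains to build the $\K$-vector space structure by hand: declare $\lambda\cdot a=t_\lambda a$ on $Z_1$, propagate it to $L_2$ via $\widetilde\psi$ (equivalently via $\d$) and to $w\cdot Z_1$ via $w$, and check $\K$-bilinearity and $G$-equivariance. As in the proof of Fact~\ref{f:2k}, the relations $(uw)^{3}=1$ and $t_\mu u_\lambda t_\mu^{-1}=u_{\lambda\mu^{2}}$, together with $p\neq 2,3$, supply exactly the identities that make the verification go through, and one reads off $V\simeq\Sym^2\Nat G$. The main obstacle is this last computation --- one $\d$-layer longer than the rank-$2k$ one and genuinely sensitive to the characteristic --- together with the bookkeeping of the possibly scattered centraliser $C_V(T)$ flagged after Corollary~\ref{c:Tlines}; establishing $C_V(B)\o=0$ at the outset is precisely what makes that bookkeeping tractable below rank $3k$.
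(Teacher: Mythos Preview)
Your outline is in the right spirit and several ingredients coincide with the paper's, but two steps do not go through as written.

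\textbf{The opening reduction $C_V(B)\o = 0$.} Your ``rank count against the shape forced by Lemmas~\ref{l:noZn} and~\ref{l:noZiZi+1}'' does not yield a contradiction on its own. Concretely, nothing you have said rules out the pattern $(n_1,n_2,n_3)=(0,1,1)$ with $Z_1$ entirely $T$-central: Lemma~\ref{l:noZiZi+1} then forces $C_{Y_2}(T)=0$, but says nothing about $C_{Y_3}(T)$, and the bookkeeping only gives $\rk Z_1\leq k/2$, not $Z_1=0$. The paper does not attempt this reduction first; instead it uses Lemma~\ref{l:bracket} to show $0\neq [U,C_V(T)]\lneq [T,V]$, so $[U,C_V(T)]$ is exactly one non-trivial $T$-line $L$, which is $U$-centralised and hence lies in $Z_1$. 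Only then does one get $Z_1=L$ (a line) and $(n_1,n_2,n_3)=(1,0,1)$. Your appeal to Theorem~\ref{t:threefields} is also misdirected: with the target $Y_2$ $T$-central the theorem returns $\K\simeq\L$ but gives no information about \emph{weights}; the paper never invokes it here.

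\textbf{The scalar action.} The definition $\lambda\cdot a = t_\lambda a$ on $Z_1$ is \emph{not additive} in $\lambda$: in the adjoint module the highest weight is $2$, so $t_\lambda$ acts as $\lambda^2$ on $Z_1$ and $t_{\lambda+\mu}\neq t_\lambda+t_\mu$ in $\End(Z_1)$ whenever $p\neq 2$. (In Fact~\ref{f:2k} this worked precisely because the weight was $1$.) The paper does not build the $\K$-structure by hand in length~$3$: it first carries out the $(uw)^3=1$ computation in the decomposition $V=L\oplus C_V(T)\oplus w\cdot L$, introducing $c\colon L\to C_V(T)$ and $d=\d\circ c$, and obtains $\ell(a)=a$, $wc(a)=-c(a)$, and the key identity $d(a)=2a$. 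This exhibits $V$ as a sum of copies of $\Sym^2\Nat G_1$ over the prime field, and only then is the $\K$-linear structure produced, via Fact~\ref{f:theorem2}. Your proposed propagation ``via $\widetilde\psi$'' cannot succeed without an additivity input equivalent to that computation.
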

\begin{proof}
We may assume $\rk V > 2k$; then $\ell_U(V) \geq 3$ since otherwise $V/C_V(G)$ has rank $2k$, and $V$ has too by finiteness of $C_V(G)$. Here again, $[T, V]$ is the direct sum of two non-trivial $T$-lines; it follows that $C_V(T)$ is infinite; in particular, the central involution $i$ acts as $1$.

Notice that $[U, C_V(T)] \neq 0$ as otherwise $U$ centralises $C_V(T) = w\cdot C_V(T)$, so $G = \<U, wUw^{-1}\>$ centralises $C_V(T)$; by irreducibility the latter is therefore finite (notice that we did not reduce to $C_V(G) = 0$), a contradiction.

Hence $[U, C_V(T)] \neq 0$; on the other hand equality does not hold in the inclusion $[U, C_V(T)] \leq [T, V]$ given by Lemma \ref{l:bracket}: as otherwise $[T, V]$ is $\<U, w\> = G$-invariant, so by irreducibility again, $V = [T, V]$, against $C_V(T) \neq 0$.

The conclusion of the above analysis is that $[U, C_V(T)]$ consists in exactly one non-trivial $T$-line $L$; it follows that $[T, V] = L \oplus w\cdot L$. Since $L$ is $U$-invariant and $T$-minimal, it is centralised by $U$; hence $L \leq Z_1$.


%

By Lemma \ref{l:noZn} there is a non-trivial $T$-line in $Y_n$, where $n > 2$ is the unipotence length, and in particular there is none in $Y_{n-1}, \dots, Y_2$: so by Lemma \ref{l:noZiZi+1}, $n -1 = 2$; $V$ is a cubic module in the sense $[U, U, U, V] = 0$. Let us study its structure further. First we see that $T$ centralises $Y_2$.

A priori, by Fact \ref{f:semisimple}, one should expect $Z_1 = L \oplus X_1$ with $X_1 = C_{Z_1}(T)$. But by Lemma \ref{l:Bruhat}, $w\cdot X_1$ embeds (modulo a finite kernel) into $Y_3$: both $w\cdot X_1$ and $Y_2$ are centralised by $T$, which violates Lemma \ref{l:noZiZi+1}. So $X_1 = 0$ and $Z_1 = L = [U, C_V(T)]$.
Likewise, $[T, Y_3] = Y_3$ is a non-trivial $T$-line. Rank reasons then yield $V = L \oplus C_V(T) \oplus w \cdot L$.

It is time to identify the action. Over the prime field $\F_p$ one could rely on Fact \ref{f:theorem1} but this is not the spirit, so we perform a subset of the mentioned computations.

In view of the decomposition $V = L \oplus C_V(T) \oplus w \cdot L$, there are additive maps $c: L \to C_V(T)$ and $\ell: L \to L$ such that for any $a\in L = Z_1$, one has $uw a = wa + c(a) + \ell(a)$. Recall from Notation \ref{n:d} that $\d$ stands for $\ad_u = u-1 \in \End(V)$. Hence:
\begin{align}
uw uw a & = uw (wa + c(a) + \ell(a))\nonumber\\
& = a + w c(a) + \d w c(a) + w \ell(a) + c(\ell(a)) + \ell(\ell(a))\nonumber\\
= (uw)^{-1} a & = w a\nonumber
\end{align}
Projecting onto $w \cdot L$, one finds $\ell(a) = a$; projecting onto $C_V(T)$, $w c(a) = - c (\ell(a)) = - c(a)$.

If $c(a) = 0$ for $a \neq 0$, then actually $uw a = w a + a$ and therefore $wa = (uw)^{-1} a = (uw)^2 a = uw (wa + a) =  a +  w a + a$, a contradiction to the characteristic not being $2$. So $c: L  \to C_V(T)$ is injective, and therefore $\rk C_V(T) \geq k$; $c$ is also surjective, so the previous computation shows that $w$ inverts $C_V(T)$.

Now let $d(a) = \d c(a) \in [U, C_V(T)] = L$. Find:
\begin{align*}
uwuw c(a) & = - uw (c(a) + d(a)) = c(a) + d(a) - w d(a) - c(d(a)) - d(a)\\
= (uw)^{-1} c(a) & = w c(a) - wd(a) = - c(a) - w d(a)
\end{align*}
so $c(d(a)) = 2 c(a)$. Since the map $c: L \to C_V(T)$ is injective, $d(a) = 2a$.

It is now clear that $V$ is a direct sum of copies of $\Sym^2 \Nat G_1$, where $G_1 = \<u, w\> = \SL_2(\F_p)$.
We then apply Fact \ref{f:theorem2}. Since the resulting $\K$-linear structure is definable, there is a definable isomorphism with $\Sym^2 \Nat G$.
(After reading again the end of the argument of \cite{CDSmall}, we have no clue what Cherlin meant that day.)
\end{proof}

\begin{remarks*}\
\begin{itemize}
\item
Here we did assume that the characteristic was neither $0$ (at the beginning of the argument, when we used Lemma \ref{l:bracket}) nor $2$ (at the end of the argument, for our computations); it could however be $3$.
\item
The characteristic $2$ case is dealt with quickly in \cite{CDSmall}; remember that the characteristic $0$ case is most efficiently handled by the model theory of Fact \ref{f:char0}.
\item
The argument does not classify all cubic $\SL_2(\K)$-modules of finite Morley rank (if it did there would be no Proposition \ref{p:twist}). What it does classify is those cubic modules with descending coherence degree $\kappa(V)$ (\cite[\S5]{TV-II}) equal to $1$, i.e. with $Z_1 = \ker \d$; there are no assumptions on the rank. The interested reader should try to prove directly that $\ker \d = Z_1$ in the $\rk V \leq 3k$ configuration; even our quickest argument goes through splitting $V = L \oplus C_V(T) \oplus w \cdot L$.
\end{itemize}
\end{remarks*}

\subsection{The Quartic Analysis}\label{s:quartic}

\begin{proposition}\label{p:quartic}
If $V$ as in Notation \ref{n:V} has $U$-length exactly $4$, then there is a definable $\K$-vector space structure such that $V \simeq \Sym^3 \Nat G$.
\end{proposition}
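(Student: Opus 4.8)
The plan is to pin down the $B$-module structure of $V$ from rank considerations, then to reproduce — one degree higher — the coordinate computations of \S\ref{s:3k} over the prime field, and finally to transport the resulting linear structure to $\K$ by Fact \ref{f:theorem2}. For the first step, note that by Corollary \ref{c:Tlines} the subgroup $[T, V]$ is a direct sum of $2$ or $4$ non-trivial $T$-lines (it is non-zero by Lemma \ref{l:noZn}). If there are four, then $\rk[T, V] = 4k = \rk V$, so $C_V(T) = 0$ by connectedness of $V$; Fact \ref{f:semisimple} then forces $C_{Y_j}(T) = 0$ for each $j$, so by Lemma \ref{l:Tlines} every factor $Y_j$ is a single non-trivial $T$-line of rank $k$, and we have the $B$-series $0 < Z_1 < Z_2 < Z_3 < Z_4 = V$ with exactly one $T$-line per step. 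It remains to exclude the two-line case: arguing as in Proposition \ref{p:length}, the two lines must sit in $Y_2$ and $Y_4$, whence $Z_1 = Y_1$ is $T$-central. But $w$ normalises $T$, hence preserves both $C_V(T)$ and $[T, V]$; so $w \cdot Z_1 \leq C_V(T)$, and running the computation of Lemma \ref{l:noZn} on an element $a \in Z_1$ — via $(uw)^3 = 1$ and Lemma \ref{l:bracket} — yields $wa - a \in C_V(T) \cap [T, V] = 0$. Then $\<U, w\> = G$ centralises $Z_1 \neq 0$, contradicting irreducibility together with $\ell_U(V) = 4$; so the four-line case holds.

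Next, put $G_1 = \<u, w\> \simeq \SL_2(\F_p)$ with $u = u_1$ and $\d = \ad_u$. Since $\d Z_j \leq Z_{j-1}$ one has $\d^4 = 0$ on $V$, and as every unipotent of $G_1$ is a conjugate of a power of $u$, the hypothesis $(u'-1)^5 = 0$ of Fact \ref{f:theorem1} holds. Using the four-line structure together with Lemma \ref{l:Bruhat}, one may arrange $V = Z_1 \oplus L \oplus w \cdot L \oplus w \cdot Z_1$ with $L$ a $T$-complement of $Z_1$ in $Z_2$; the $G_1$-action is then encoded by a handful of additive maps obtained by iterating $\d$ along this chain, together with the action of $w$ on $L$ and $Z_1$. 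Expanding $(uw)^3 = 1$ and $w^2 = i$ on elements of $Z_1$ and of $L$ — the same bookkeeping as in the proofs of Facts \ref{f:2k} and \ref{f:3k}, one step longer, where one divides by $2$ and by $3$, whence the restriction on the characteristic — pins the structure constants to those of $\Sym^3 \Nat G_1$; that no composition factor $\Sym^k$ with $k \leq 2$ intervenes is exactly what $\ell_U(V) = 4$ and $C_V(T) = 0$ guarantee. Hence $V \simeq \bigoplus_I \Sym^3 \Nat G_1$ as $\F_p[G_1]$-modules. (One could also quote Fact \ref{f:theorem1} outright and then discard the unwanted factors using the rank and $T$-structure; the explicit computation is, however, closer to the spirit of \S\ref{s:3k}.)

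Finally, apply Fact \ref{f:theorem2} with $n = 4$: $V$ acquires a compatible, $G$-definable $\K$-vector space structure for which $V \simeq \bigoplus_J \Sym^3 \Nat G$ as $\K[G]$-modules, and by $G$-irreducibility the index set is a singleton, so $V \simeq \Sym^3 \Nat G$ definably. The main obstacle is the computation in the second step: it is genuinely unpleasant, as the author warns, and the delicate point is not exhibiting the $\Sym^3$ but ruling out the lower-degree factors — which is where the exact value $\rk V = 4k$ and the vanishing $C_V(T) = 0$, hence the work of the first step, are indispensable. A secondary subtlety is the exclusion of the ``scattered centraliser'' configuration, which hinges on the observation that $w$ preserves $C_V(T)$ and $[T, V]$, so that the argument of Lemma \ref{l:noZn} applies verbatim to $Z_1$.
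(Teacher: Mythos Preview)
Your first paragraph is correct and gives a pleasant variant of the paper's Claim~\ref{p:quartic:st:decomposition}. Where the paper excludes the two-line case by embedding $w\cdot Z_1$ into $Y_4$ via Lemma~\ref{l:Bruhat}, you observe directly that $Z_1 \leq C_V(T)$ implies $w\cdot Z_1 \leq C_V(T)$ and rerun the $(uw)^3 = 1$ computation of Lemma~\ref{l:noZn} (with Lemma~\ref{l:bracket}) to get $wa - a \in C_V(T) \cap [T,V] = 0$; since $G = \<U, w\>$ this contradicts irreducibility.

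Your second paragraph, however, hand-waves past the one step that is \emph{not} mere bookkeeping. After determining that $i$ acts as $-1$ (itself a separate computation, the paper's Claim~\ref{p:quartic:st:i}, which you do not isolate), expanding $(uw)^3 = 1$ on $a \in Z_1$ produces an additive $\beta: Z_1 \to L$ with $uwa = wa + w\beta(a) + \beta(a) - a$; a further expansion on $\beta(a)$ yields only $\d\beta(a) + 3a \in \ker\beta$. To conclude $\d\beta(a) = -3a$ one must know that $\beta$ is \emph{injective}, and this is the paper's Claim~\ref{p:quartic:st:beta:isomorphism} --- explicitly flagged there as requiring a genuine rank/connectedness argument beyond computation (one varies $\lambda$ and shows $\{\mu \in \K: \beta_\mu(a) = 0\}$ is a subgroup of full rank in $\K_+$, hence all of $\K_+$, forcing $\<G\cdot a\>$ to be quadratic). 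There is no analogue of this in the $\leq 3k$ proofs, where injectivity of the corresponding map falls out of a one-line calculation; so ``the same bookkeeping, one step longer'' undersells the difficulty. Your assertion that $\ell_U(V) = 4$ and $C_V(T) = 0$ rule out $\Nat G_1$ composition factors is not justified: these constrain the full $U$ and $T$, while a putative $\Nat G_1$ factor is only $G_1$-invariant and could sit obliquely to the $T$-decomposition. The paper's exclusion of such factors (its final Claim) again passes through $\ker\d = Z_1$, i.e., through Claim~\ref{p:quartic:st:beta:isomorphism}; so even the route via Fact~\ref{f:theorem1} does not bypass it.
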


Before the proof it could be good to remind the reader that $\Sym^3 \Nat G$ is naturally isomorphic to the representation of $\SL_2(\K)$ in the $4$-dimensional space $\K[X^3, X^2Y, XY^2, Y^3]$ of homogeneous polynomials in $X, Y$ with degree $3$, and to encourage him to perform a few calculations there (exponentiating the action of the Lie algebra is an alternative to direct computation).

\begin{proof}[Proof of Proposition \ref{p:quartic}]
\setcounter{step}{0}
\begin{notationinproof}
For $j = 1\dots 4$, let $\check{Z}_j = (Z_j \cap w\cdot Z_{n+1-j})\o$ (here $n = 4$).
\end{notationinproof}

\begin{step}\label{p:quartic:st:decomposition}
$V = \oplus_{j = 1}^4 \check{Z}_j$.
\end{step}
\begin{proofclaim}
If $V = [T, V]$, the claim follows from \ref{c:Tlines}: in that case and for a rank reason, $V$ is a direct sum of exactly four non-trivial $T$-lines, and since the $U$-length is exactly four, one sees that each $Y_i$ is a non-trivial $T$-line. Then intersections are easily taken.

Notice that $V = [T, V]$ would follow from the next Claim \ref{p:quartic:st:i} --- but we do not know the action of the central involution yet.

So let us assume towards a contradiction that $C_V(T) \neq 0$. Now there are exactly two non-trivial $T$-lines. Since $T$ does not centralise $Y_4$ by Lemma \ref{l:noZn}, and in view of Lemma \ref{l:noZiZi+1}, these lines are in $Y_2$ and $Y_4$ (an argument already used for Proposition \ref{p:length}). At this point $T$ centralises $Y_1$ and $Y_3$, so by Lemma \ref{l:noZiZi+1} again, $[T, Y_4] = Y_4$ is a non-trivial $T$-line.
On the other hand, by Lemma \ref{l:Bruhat}, $w \cdot Z_1$ embeds modulo a finite kernel into $Y_4$, so $w\cdot Z_1$ is a non-trivial $T$-line: $Z_1$ as well, a contradiction.
\end{proofclaim}

\begin{step}\label{p:quartic:st:i}
The central involution $i$ acts as $-1$.
\end{step}
\begin{proofclaim}
The awful computation underlying Fact \ref{f:theorem1} (more specifically, \cite[\S1.2.7]{TV-III}) is not fully conclusive since one could imagine a globally quartic $\PSL_2(\Z)$-module which splits into cubic-by-cubic. So let us compute.

Suppose $i$ acts as $1$; fix $a \in Z_1$. Noticing $w\cdot \check{Z}_2 = \check{Z}_3$ and given the decomposition of Claim \ref{p:quartic:st:decomposition}, there are additive maps $\alpha: Z_1 \to Z_1$, and $\beta, \gamma : Z_1 \to \check{Z}_2$ such that:
\[
uw a = w a + w \gamma(a) + \beta(a) + \alpha(a)
\]
Hence:
\begin{align*}
uw uw a & = a + \gamma(a) + \d\gamma(a) + w \beta(a) + \d w \beta(a)\\
& \quad  + w\alpha(a) + w \gamma(\alpha(a)) + \beta(\alpha(a)) + \alpha(\alpha(a))\\
 = (uw)^{-1} a & = w a
\end{align*}
Projecting onto $\check{Z}_4$ one finds $\alpha(a) = a$, then projecting onto $\check{Z}_3$ one finds $\gamma(a) = - \beta(a)$, so finally $ \d w \beta(a) = \d \beta(a) - 2 a$.

Then letting $\delta(a) = \d \beta(a) - 2 a \in Z_1$:
\begin{align*}
uw uw \beta(a) & = uw (w \beta(a) + \delta(a))\\
& = \beta(a) + \d\beta(a) + w \delta(a) - w \beta(\delta(a)) + \beta(\delta(a)) + \delta(a)\\
= (uw)^{-1} \beta(a) & = w \beta(a) - w \d \beta(a)
\end{align*}
Now this yields $\delta(a) = - \d\beta(a) = \d\beta(a) - 2 a$, and $\d\beta(a) = a = - \delta(a)$. Hence $\d w a = - w \beta(a) + \beta(a) + a$ entails $\d^2 wa = 0$.

We just proved $\d^2(w\cdot Z_1) = 0$; conjugating by $T$, one has $\d_\lambda^2(w\cdot Z_1) = 0$ for any $\lambda \in \K$; since the characteristic is not $2$, $\d_\lambda \d_\mu (w \cdot Z_1) = 0$ for any $(\lambda, \mu) \in \K^2$. This means $w \cdot Z_1 \leq Z_2$. Now Lemma \ref{l:Bruhat} forces the $U$-length to be $2$, a contradiction.
\end{proofclaim}

\begin{step}\label{p:quartic:st:beta}
For $\lambda \in \K^\times$ there is an abelian group homomorphism $\beta_\lambda: Z_1 \to \check{Z}_2$ such that for any $a \in Z_1$:
\[
u_\lambda w a = w a + w t_\lambda^{-1} \beta_\lambda(a) + \beta_\lambda (a) - t_\lambda a
\]
As a matter of fact, $\beta_{\lambda^2}(a) = t_\lambda \beta_1(t_\lambda a)$.
\end{step}
\begin{proofclaim}
We shall first do it over the prime field, i.e. for $\lambda = 1$, and vary the field and values of $\lambda$ afterwards.
So for the moment set $\lambda = 1$; this is a quick computation, very similar to Claim \ref{p:quartic:st:i} (as a matter of fact, one could reach both Claims simultaneously but the exposition may not benefit from doing so).

Fix $a \in Z_1$. Noticing $w\cdot \check{Z}_2 = \check{Z}_3$ and given the decomposition of Claim \ref{p:quartic:st:decomposition}, one should expect additive maps $\alpha: Z_1 \to Z_1$, and $\beta, \gamma : Z_1 \to \check{Z}_2$ such that:
\[
uw a = wa + w \gamma(a) + \beta (a) + \alpha(a)
\]
Bearing in mind that $(uw)^3 = 1$, one finds:
\begin{align*}
uw uw \cdot a & = u (- a - \gamma(a) + w \beta(a) + w \alpha(a))\\
& = - a - \gamma(a) - \d \gamma(a) + w \beta(a) + \d w \beta(a)\\
& \quad + w \alpha(a) + w \gamma(\alpha(a)) + \beta(\alpha(a)) + \alpha (\alpha(a))\\
= (uw)^{-1} a & = - wu^{-1} a = - wa
\end{align*}
Projecting onto $\check{Z}_4 = w \cdot Z_1$, this yields $\alpha(a) = - a$; projecting onto $\check{Z}_3$, one finds $\gamma(a) = - \beta(\alpha(a)) = \beta(a)$.

Now let $\lambda \in \K^\times$ and $\ell \in \K^\times$ be a square root. Then:
\begin{align*}
u_\lambda w a & = t_\ell u t_\ell^{-1} w a = t_\ell u w t_\ell a\\
& = t_\ell (w t_\ell a + w \beta(t_\ell a) + \beta(t_\ell a) - t_\ell a)\\
& = w a + t_\ell w \beta(t_\ell a) + t_\ell \beta(t_\ell a) - t_\lambda a\\
& = w a + w t_\lambda^{-1} t_\ell \beta (t_\ell a) + t_\ell \beta(t_\ell a) - t_\lambda a
\end{align*}
So one should let $\beta_\lambda(a) = t_\ell \beta(t_\ell a)$ to find the desired decomposition.
\end{proofclaim}

Model theory, more specifically connectedness arguments, will play a role in the next Claim. Without finiteness of the rank, it might fail (the brave should see \cite[\S1.2.8]{TV-III}).

\begin{step}\label{p:quartic:st:beta:isomorphism}
$\beta_\lambda: Z_1 \to \check{Z}_2$ is a group isomorphism.
\end{step}
\begin{proofclaim}
%
%
%
Let $a \in Z_1$, and introduce the definable sets $A = \{\mu \in \K^\times: \beta_\mu (a) = 0\} \cup\{0\}$ and $X = \K_+ \setminus A$. Suppose $A \neq 0$ and fix some $\lambda \in A\setminus\{0\}$, ie. with $\beta_\lambda(a) = 0$.

Thanks to Claim \ref{p:quartic:st:beta}, one has for arbitrary $\mu \in \K^\times$:
\begin{align*}
u_\mu u_\lambda w a & = u_\mu (w a - t_\lambda a)\\
& = w a + w t_\mu^{-1} \beta_\mu(a) + \beta_\mu(a) - t_\mu (a) - t_\lambda(a)\\
= u_{\lambda + \mu} w a & = w a + w t_{\lambda+\mu}^{-1} \beta_{\lambda + \mu} (a) + \beta_{\lambda + \mu}(a) - t_{\lambda + \mu} a
\end{align*}
Projecting onto $\check{Z}_2$, one finds $\beta_{\lambda + \mu} (a) = \beta_\mu(a)$. Hence $A$ is a subroup of $\K_+$.

Let $\mu \in X$, i.e. with $\beta_\mu(a) \neq 0$, if there is such a thing.
Returning to the equation and now projecting onto $\check{Z}_3$, one finds $t_\mu^{-1} \beta_\mu(a) = t_{\lambda + \mu}^{-1} \beta_{\lambda + \mu}(a) = t_{\lambda + \mu}^{-1} \beta_\mu(a)$. Hence $t_{(\lambda + \mu)\mu^{-1}}$ centralises $\beta_\mu(a) \neq 0$.
Then by $T$-minimality of $\check{Z}_2$ and Zilber's Field Theorem, $t_{(\lambda + \mu)\mu^{-1}}$ centralises all of $\check{Z}_2$.

Therefore $X \subseteq \{\mu \in \K^\times: t_{(\lambda + \mu)\mu^{-1}} \in C_T(\check{Z}_2)\}$.
But $C_T(\check{Z}_2)$ is proper (hence not of maximal rank) in the connected group $T$, and the map $\mu \mapsto 1 + \frac{\lambda}{\mu}$ is injective on $X$, so $X$ is not of maximal rank in $\K_+$. Hence $A$ is of maximal rank, but it also is a subgroup.

This proves $A = \K_+$ \cite[Corollary 5.13 if not obvious]{BNGroups}, and $\beta_\mu(a) = 0$ for any $\mu \in \K^\times$. In particular,
\[u_\mu w a = w a - t_\mu a\]
for all $\mu \in \K^\times$; now $\<G \cdot a\>$ is easily seen to be a quadratic module, hence by irreducibility, $a = 0$. A rank argument then proves the claim.
\end{proofclaim}

\begin{step}\label{p:quartic:st:dbeta}
For $a \in Z_1$, one has $\d \beta(a) = - 3 a$.
\end{step}
\begin{proofclaim}
Return to the decomposition obtained in Claim \ref{p:quartic:st:beta}: letting $\beta = \beta_1$, one has $u w a = w a + w \beta(a) + \beta(a) - a$.
Then:
\begin{align*}
uw uw a & = uw ( w a + w \beta (a) + \beta(a) - a)\\
& = - a - \beta(a) - \d \beta(a) + w \beta(a) + \d w \beta(a)\\
& \quad - w a - w \beta(a) - \beta(a) + a\\
= (uw)^{-1} a & = - w u^{-1} a = - w a
\end{align*}
So that $\d w \beta(a) = 2 \beta(a) + \d \beta(a)$. Now let $q = \d \beta(a) \in Z_1$. Then:
\begin{align*}
uw uw \beta(a) & = uw (w \beta(a) + 2 \beta(a) + q)\\
& = - \beta(a) - q + 2 w \beta(a) + 4 \beta(a) + 2 q + w q + w \beta(q) + \beta(q) - q\\
& = w q + w \beta(q + 2a) + \beta(q + 3a)\\
= (uw)^{-1} \beta (a) & = - w (\beta(a) - q) = w q - w \beta(a)
\end{align*}
This forces $\beta(q + 3a) = 0$, so by Claim \ref{p:quartic:st:beta:isomorphism}, $q = - 3a$, as desired.
\end{proofclaim}

\begin{step}
There is a $\K$-vector space structure on $V$ for which $V \simeq \Sym^3 \Nat G$.
\end{step}
\begin{proofclaim}
At this stage we could rely on Fact \ref{f:theorem1} to determine the structure of $V$ as a $G_1$-module, where $G_1 = \SL_2(\F_p)$.
The argument would be as follows.

First, we compute the descending coherence degree $\kappa(V)$ in the sense of \cite[\S5]{TV-II}; more precisely we show $\kappa(V) = 1$, viz. $\ker \d = Z_1$. One inclusion is clear; for the other, let $v \in \ker \d$. In view of Claim \ref{p:quartic:st:decomposition}, we may suppose $v = w a + w b + c$ for $a \in Z_1$ and $(b, c) \in \check{Z}_2^2$. It is not hard to see that $\d w b = - \beta (\d b) + \d b$, so that $0 = \d v = w \beta(a) + \beta(a) - a - \beta (\d b) + \d b + \d c$. Projecting onto $\check{Z}_3$, one has $\beta(a) = 0$ whence $a = 0$ by Claim \ref{p:quartic:st:beta:isomorphism}; then projecting onto $\check{Z}_2$, $\beta(\d b) = 0$ whence $\d b = 0$; finally $\d c = 0$. But since $\beta: Z_1 \to \check{Z}_2$ is onto by Claim \ref{p:quartic:st:dbeta}, one sees that $\beta(\d b) = - 3b = 0$, so $b = c = 0$, and $v = 0$. This proves $\ker \d = Z_1$.

Now by Fact \ref{f:theorem1}, more specifically \cite[\S1.2.8]{TV-III} (``$n = 4, i = -1$'') there, as a $G_1$-module, $V$ has a composition series (in the classical sense of the term) $0 \leq V_1 \leq V_2 \leq V$ with $V_1$ and $V_2/V_1$ isomorphic to sums of copies of $\Nat G_1$, and $V/V_2$ a sum of copies of $\Sym^3 \Nat G_1$. However, the quadratic radical over $G_1$ is $G_1$-generated by $\ker \d \cap w \cdot \ker \d^2 = Z_1 \cap w \cdot Z_2$, and by Lemma \ref{l:Bruhat}, the latter is contained in $C_V(T) = 0$, so $V_2 = 0$ and $V \simeq V/V_2$ has the desired structure under $G_1$.


As a matter of fact we do not need \cite[Theorem 1]{TV-III} since we already know that for $a \in Z_1$, hold $u w a = w a + w \beta(a) + \beta(a) - a$ and $\d \beta(a) = -3a$: from there and the decomposition of Claim \ref{p:quartic:st:decomposition}, constructing a $G_1$-isomorphism with $\Sym^3 \Nat G_1$ is actually immediate.

In order to recover a $\K$-linear structure, we now use Fact \ref{f:theorem2} (more accurately, \cite[Theorem 2]{TV-III} and the remark following it to justify working in characteristic $\neq 2, 3$).
\end{proofclaim}

This completes the proof of Proposition \ref{p:quartic}.
\end{proof}

\subsection{The Tensor Analysis}\label{s:twist}

And now let us do the twist.

\begin{proposition}\label{p:twist}
If $V$ as in Notation \ref{n:V} has rank $>3k$ and $U$-length $\leq 3$, then there are a definable $\K$-vector space structure and a definable automorphism $\chi$ of $\K$ such that $V \simeq \Nat G\otimes{}^\chi(\Nat G)$ (twist-and-tensor).
\end{proposition}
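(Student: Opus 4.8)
The plan is to run the same kind of analysis as in Proposition~\ref{p:quartic}, but now the twist $\chi$ must be \emph{manufactured} from an asymmetry inside $V$ rather than read off a prime-field module: over $\F_p$ the relevant module is $\Nat\SL_2(\F_p)\otimes\Nat\SL_2(\F_p)$, which is reducible and retains no trace of $\chi$, so Facts~\ref{f:theorem1}--\ref{f:theorem2} cannot be invoked. The field structure will instead be built by hand on the weight spaces.

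\emph{Step 1 --- the $T$-module.} Since $\rk V\le 3k$ is settled by Facts~\ref{f:2k} and~\ref{f:3k}, and $\ell_U(V)\le 2$ would force $V\simeq\Nat G$ of rank $2k$ (Fact~\ref{f:2k} and the Remark following it), we have $\ell_U(V)=3$ and $3k<\rk V\le 4k$. By Corollary~\ref{c:Tlines} the number of non-trivial $T$-lines is even; it is not $2$, for then the analysis of Fact~\ref{f:3k} yields $Z_1=L=[U,C_V(T)]$ together with an injective map $c\colon L\to C_V(T)$ with $u\cdot w a=wa+a+c(a)$ for $a\in L$, and since $\<G\cdot L\>=V$ while every $G$-translate of $L$ has $C_V(T)$-component inside $c(L)$, the map $c$ is onto --- whence $\rk C_V(T)=k$ and $\rk V=3k$, a contradiction. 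So there are exactly four non-trivial $T$-lines, $\rk V=4k$, and $C_V(T)=0$. Each $Y_j$ is then a pure sum of non-trivial $T$-lines; combining Lemma~\ref{l:Bruhat} (which gives $w\cdot Z_1\hookrightarrow V/Z_2$) with the identity $V=Z_2+w\cdot Z_1$ (valid because $\<G\cdot Z_1\>=V$, $G=B\sqcup BwU$, $B\cdot Z_1=Z_1$ and $[U,V]\le Z_2$) pins the distribution of the four lines down to $(1,2,1)$ along $0\subsetneq Z_1\subsetneq Z_2\subsetneq V$. Write $L_1=Z_1$, $L_2\oplus L_3=Z_2/Z_1$, $L_4=V/Z_2$; then $w$ interchanges the pairs $\{L_1,L_4\}$ and $\{L_2,L_3\}$ (up to isogeny), and the four $T$-characters are pairwise distinct (a repeated one would be $2$-torsion on the divisible group $T$, hence trivial, against $C_V(T)=0$). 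By Zilber's Field Theorem each $L_i$ is an algebraically closed field, and, running the argument of Lemma~\ref{l:Tlines} --- i.e. feeding the bi-additive $T$-covariant commutator maps between consecutive $L_i$'s into Theorem~\ref{t:threefields} --- each $L_i$ is definably isomorphic to $\K$.

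\emph{Step 2 --- the core computation.} Fix $0\ne x_0\in L_4$ and decompose $\d_\lambda(x_0)=\d^{(1)}_\lambda(x_0)+\d^{(2)}_\lambda(x_0)+\d^{(3)}_\lambda(x_0)$ along $L_1\oplus L_2\oplus L_3=Z_2$. From $u_{\lambda+\mu}=u_\lambda u_\mu$ and $\d(Z_2)\le Z_1$ one reads that $\lambda\mapsto\d^{(2)}_\lambda(x_0)$ and $\lambda\mapsto\d^{(3)}_\lambda(x_0)$ are additive (the failure of additivity being swallowed by $L_1$). Using conjugation by $T$ ($t_\mu\d_\lambda t_\mu^{-1}=\d_{\lambda\mu^2}$) together with the relation $(uw)^3=1$, connectedness of $L_2,L_3,L_4$ and finiteness of the Morley rank --- the mechanism is that of Claim~\ref{p:quartic:st:beta:isomorphism} --- one shows that both maps, as well as $\d\colon L_2\to L_1$ and $\d\colon L_3\to L_1$, are group isomorphisms. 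The characteristic restrictions $p\ne 2,3$ enter here exactly as in \S\ref{s:quartic}. Then transport the field $\K$ onto the weight spaces in the pattern of $\Nat G\otimes{}^\chi(\Nat G)$: use $\lambda\mapsto\d^{(3)}_\lambda(x_0)$ to put a $\K$-structure on $L_3$, then carry it along $w$ and the isomorphisms just obtained to $L_4$, $L_1$, $L_2$. One checks --- again from $(uw)^3=1$ and the commutator identities --- that this defines a \emph{definable} $\K$-vector-space structure on $V=\bigoplus_i L_i$ compatible with $T$, with $U$, and with $w$, hence with all of $G$. The discrepancy $\chi$ between the two a priori independent $\K$-parametrisations of $L_4$ provided by $\d^{(2)}_\bullet$ and $\d^{(3)}_\bullet$ is, by construction, a definable additive bijection of $\K$; that it is also multiplicative --- a genuine field automorphism --- is forced by the $T$-covariance relations through Theorem~\ref{t:threefields}, which turns ``additive'' into ``field-theoretic''. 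With this structure $V\simeq\Nat G\otimes{}^\chi(\Nat G)$, and $\chi\ne\Id$ since otherwise the two pairs of $T$-characters would coincide, against $C_V(T)=0$.

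\emph{Main obstacle.} The real content is Step~2: with modules much larger than the group there is no genericity available, so the maps $\d^{(i)}_\bullet$ must be shown bijective by a pure connectedness argument in the style of Claim~\ref{p:quartic:st:beta:isomorphism} --- harder here, with three components to track at once --- and, above all, the discrepancy $\chi$ has to be upgraded from an additive to a multiplicative bijection; this is precisely the extra information a twist-and-tensor module carries beyond a plain tensor product, and is the crux of the proposition. By contrast the elimination of the degenerate line-distributions in Step~1 is routine once the identity $V=Z_2+w\cdot Z_1$ is in hand.
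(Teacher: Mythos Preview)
Your sketch has the right architecture and you correctly locate the crux in Step~2, but the way you propose to discharge it contains a genuine gap.

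\textbf{On Step 1.} Your argument for the $(1,2,1)$ distribution via $V=Z_2+w\cdot Z_1$ and Lemma~\ref{l:Bruhat} is valid and actually cleaner than the paper's route, which instead first proves that $(Z_2\cap w\cdot Z_2)^\circ$ is infinite by a direct $(uw)^3=1$ computation and then builds $L\oplus w\cdot L$ inside $Z_2$. Two points however need attention. First, you never determine the action of the central involution $i$; the paper opens with a separate computation showing $i=+1$ (its Claim~\ref{p:twist:st:i}), and the later $(uw)^3$ calculus depends on knowing the sign. Second, your elimination of the ``two $T$-lines'' case by invoking the analysis of Fact~\ref{f:3k} is circular as written: that analysis reached $V=L\oplus C_V(T)\oplus w\cdot L$ only \emph{after} using $\rk V\le 3k$, so you cannot simply import its conclusion and then read off $\rk C_V(T)=k$. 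The paper handles this case differently, by a direct contradiction inside the decomposition argument.

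\textbf{On Step 2: the real gap.} You write that the $\K$-vector space structure and $G$-linearity are checked ``from $(uw)^3=1$ and the commutator identities'', and that multiplicativity of $\chi$ is then ``forced by the $T$-covariance relations through Theorem~\ref{t:threefields}''. This misplaces the difficulty. Theorem~\ref{t:threefields} produces definable \emph{isomorphisms between fields} from covariant bi-additive maps; it does not show that a given additive bijection of $\K$ is a ring homomorphism. In the paper the hard step is earlier: one must show that the map $\psi\colon\lambda\mapsto\beta\d_\lambda\in\End(L)$ is \emph{multiplicative} (Claim~\ref{p:twist:st:multiplicative}). This is what makes the scalar action on $L$ a field action and what later makes $T$ act linearly; once it is in hand, multiplicativity of $\chi$ is a one-line consequence of $T$-linearity. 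The paper's proof of $\psi_\lambda\psi_\mu=\psi_{\lambda\mu}$ is not a routine $(uw)^3$ check: it first extracts a Hua-type identity $\psi_{\mu\lambda^2}=\psi_\lambda\psi_\mu\psi_\lambda$ from Equation~\eqref{p:twist:eq:fundamental}, then uses $\psi_{P(\lambda)}=P(\psi_\lambda)$ for $P\in\F_p[X]$ to get commutativity of the $\psi_\lambda$ over $\overline{\F_p}$, and finally a definability-of-centraliser argument to extend commutativity to all of $\K$. None of this is supplied by Theorem~\ref{t:threefields}, and your sentence ``one checks'' hides precisely this computation. Without it you have neither a $\K$-vector space structure nor $G$-linearity, let alone a field automorphism $\chi$.
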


Before the proof begins it could be useful to work out the details of the target module.

\begin{example}\label{ex:twist}
Let $\varphi, \psi \in \Aut(\K)$. Let $\K^2$ have basis:
\[e_1 = \begin{pmatrix} 1 \\ 0 \end{pmatrix}, \quad e_2 = \begin{pmatrix} 0 \\ 1\end{pmatrix}\]
Then in the twisted representation $^\varphi(\Nat G)$, one has:
\[
\left\{
\begin{array}{rcl}
w \cdot e_1 & = & - e_2\\
w \cdot e_2 & = & e_1
\end{array}
\right.
,
\left\{
\begin{array}{rcl}
t_\lambda \cdot e_1 & = & \varphi(\lambda) e_1\\
t_\lambda \cdot e_2 & = & \varphi(\lambda)^{-1} e_2
\end{array}
\right.
,
\left\{
\begin{array}{rcl}
u_\lambda \cdot e_1 & = & e_1\\
u_\lambda \cdot e_2 & = & \varphi(\lambda) e_1 + e_2
\end{array}
\right.
\]
Now $^\varphi(\Nat G) \otimes{}^\psi (\Nat G)$ has basis $(e_1\otimes e_1, e_1 \otimes e_2, e_2\otimes e_1, e_2\otimes e_2)$, and the action there is given by:
\[
\left\{
\begin{array}{rcl}
w \cdot e_1\otimes e_1 & = & e_2\otimes e_2\\
w \cdot e_1\otimes e_2 & = & - e_2\otimes e_1\\
w \cdot e_2\otimes e_1 & = & - e_1\otimes e_2\\
w \cdot e_2\otimes e_2 & = & e_1 \otimes e_1
\end{array}
\right.
,
\left\{
\begin{array}{rcl}
t_\lambda \cdot e_1\otimes e_1 & = & \varphi(\lambda) \psi(\lambda) e_1\otimes e_1\\
t_\lambda \cdot e_1\otimes e_2 & = & \varphi(\lambda) \psi(\lambda)^{-1} e_1\otimes e_2\\
t_\lambda \cdot e_2\otimes e_1 & = & \varphi(\lambda)^{-1} \psi(\lambda) e_2\otimes e_1\\
t_\lambda \cdot e_2\otimes e_2 & = & \varphi(\lambda)^{-1} \psi(\lambda)^{-1} e_2\otimes e_2\\
\end{array}
\right.
,\]
\[
\left\{
\begin{array}{rcl}
u_\lambda \cdot e_1\otimes e_1 & = & e_1\otimes e_1\\
u_\lambda \cdot e_1\otimes e_2 & = & \psi(\lambda) e_1\otimes e_1 + e_1 \otimes e_2\\
u_\lambda \cdot e_2\otimes e_1 & = & \varphi(\lambda) e_1\otimes e_1 + e_2\otimes e_1\\
u_\lambda \cdot e_2\otimes e_2 & = & \varphi(\lambda) \psi(\lambda) e_1 \otimes e_1 + \varphi(\lambda) e_1\otimes e_2 + \psi(\lambda) e_2\otimes e_1 + e_2\otimes e_2
\end{array}
\right.
\]
The reader should keep these in mind during the forthcoming proof.
\end{example}

\begin{proof}[Proof of Proposition \ref{p:twist}]
\setcounter{theorem}{6}
\setcounter{step}{0}

\begin{step}\label{p:twist:st:i}
The central involution acts as $1$.
\end{step}
\begin{proofclaim}
The value of the central involution could be read from \cite{TV-III}; we shall argue more directly. Suppose $i = -1$ and let $a_1 \in Z_1$. Let $b_2 = \d w a_1 \in [U, V] \leq Z_2$. Then:
\[
uwuw\cdot a_1 = uw (wa_1 + b_2) = - a_1 + w b_2 + \d w b_2 = (uw)^{-1}\cdot a_1 = - w a_1
\]
This suggests to write $b_2$ as $q_2 - a_1$, with $q_2 \in Z_2$. Hence:
\[- wa_1 = - a_1 + w q_2 - w a_1 + \d w q_2 - (q_2 - a_1)\]
and therefore $uw q_2 = q_2$; in particular $w q_2 \in Z_2$. Now let $r_1 = \d q_2 \in Z_1$; observe how $q_2 = (uw)^{-1} q_2 = - w (q_2 - r_1) = w r_1 - w q_2$, so $w r_1 = (1+w) q_2 \in w \cdot Z_1 \cap Z_2$. The latter is finite by Lemma \ref{l:Bruhat}; since $1+w$ is a bijection, there are finitely many possible values for $q_2$. But the map $a_1 \mapsto q_2$ is additive from the connected group $Z_1$, so $q_2 = 0$.

This means that for any $a_1 \in Z_1$ one has $\d w a_1 = - a_1 \in Z_1$, and therefore $\d^2 w \cdot Z_1 = 0$. We argue as in the end of Claim \ref{p:quartic:st:i} of Proposition \ref{p:quartic} to reach a contradiction.
\end{proofclaim}

\begin{step}\label{p:twist:st:decomposition}
$V = Z_1 \oplus L \oplus w\cdot L\oplus w\cdot Z_1$, where $Z_1$ and $L$ are non-trivial $T$-lines, and $L \leq Z_2$.
\end{step}
\begin{proofclaim}
We shall first show that $Z_2\cap w \cdot Z_2$ is infinite by an argument similar to Claim \ref{p:twist:st:i}. Suppose $Z_2 \cap w\cdot Z_2$ is finite. Then for $a_1 \in Z_1$, letting $b_2 = \d w a_1 \in Z_2$ and $c_2 = \d w b_2 \in Z_2$:
\[
uwuw a_1 = a_1 + w b_2 + c_2 = w a_1
\]
so $a_1 - b_2 \in Z_2 \cap w\cdot Z_2$ which we assumed finite. Here again, $a_1 \mapsto a_1 - b_2$ is additive, so $b_2 = a_1$; then $uw a_1 = w a_1 + a_1$, implying $a_1 = (uw)^3 a_1 = 3 a_1 + 2 w a_1$, and since the characteristic is not $2$, $w a_1 = - a_1 \in Z_1 \cap w \cdot Z_1$ which is finite by Lemma \ref{l:Bruhat}, against $Z_1$ being infinite. So $Z_2 \cap w\cdot Z_2$ is infinite.

Now suppose that $C_V(T) \neq 0$. Notice that this reduces the number of non-trivial $T$-lines to exactly $2$. Since $T$ does not centralise $Y_3$ by Lemma \ref{l:noZn}, not both these lines are in $Z_2$; so by Lemma \ref{l:noZiZi+1} there is exactly one non-trivial $T$-line contained in $Z_2$, call it $L$.

Still assuming $C_V(T) \neq 0$, suppose in addition $L \leq Z_1$. Then $Y_2$ is centralised by $T$; by Lemma \ref{l:noZiZi+1}, $C_{Y_3}(T) = 0$. But since $w\cdot Z_1$ embeds into $Y_3$ (as a $T$-module and modulo a finite kernel), we find $C_{Z_1}(T) = 0$, that is, $Z_1 = L$.
On the other hand, $\rk C_V(T) = \rk V - 2k > k$. The map $\d : C_V(T) \to V$ has image $\d (C_V(T)) \leq [U, C_V(T)] \leq [T, V]$ by Lemma \ref{l:bracket}, but also $\im \d \leq [U, V] \leq Z_2$, so $\im \d \leq ([T, V] \cap Z_2)\o = L$ of rank exactly $k$, and $\d: C_V(T) \to V$ has infinite kernel: there are infinitely many $c \in C_V(T)$ with $[u, c] = 0$. Applying $T$, any such $c$ is actually centralised by $U$: hence $(C_V(T) \cap Z_1)\o \neq 0$, contradicting $Z_1 = L$, a non-trivial $T$-line.

So, under the assumption $C_V(T) \neq 0$, the only non-trivial $T$-line in $Z_2$ is not contained in $Z_1$: hence $T$ centralises $Z_1$. By Lemma \ref{l:noZiZi+1} again, $C_{Y_2}(T) = 0$, and this shows $Z_2 = Z_1 \oplus L$.
So if an element $v$ lies in $Z_2 \cap w\cdot Z_2$, it has decompositions $v = z + \ell = w z' + \ell'$ with $z, z' \in Z_1$ and $\ell, \ell' \in L$. Now $\ell - w \ell' \in C_V(T) \cap [T, V] = 0$, and $\ell = w \ell' \in L \cap w \cdot L = 0$. Hence $v = z = w z' \in Z_1 \cap w \cdot Z_1$ which is finite by Lemma \ref{l:Bruhat}: we just contradicted $Z_2 \cap w\cdot Z_2$ being infinite.

As a conclusion, $C_V(T) = 0$ and $V$ is therefore the direct sum of non-trivial $T$-lines, the number of which can only be $4$.

But remember that $\check{Z}_2 = (Z_2\cap w \cdot Z_2)\o$ is infinite. Being a proper $\<T, w\>$-submodule of $V$, it is the sum of exactly two non-trivial $T$-lines: $\check{Z}_2 = L \oplus w\cdot L$. Notice that $Z_1 \cap \check{Z}_2 \leq Z_1 \cap w \cdot Z_2$ which is finite by Lemma \ref{l:Bruhat}, hence trivial since $C_V(T) = 0$. This way we reach $V = Z_1 \oplus L \oplus w\cdot L \oplus w \cdot Z_1$.
\end{proofclaim}

\begin{step}\label{p:twist:st:beta}
For $\lambda \in \K^\times$ there is an abelian group isomorphism $\beta_\lambda: Z_1 \simeq L$ such that for any $a \in Z_1$:
\[u_\lambda w a = wa - t_\lambda w \beta_\lambda (a) + \beta_\lambda(a) + t_\lambda a\]
As a matter of fact, $\beta_{\lambda^2}(a) = t_\lambda \beta_1(t_\lambda a)$. Finally, $\beta = \beta_1: Z_1 \to L$ and $\d: L \to Z_1$ are converse bijections.
\end{step}
\begin{proofclaim}
This is very much like Claim \ref{p:quartic:st:beta} in the proof of Proposition \ref{p:quartic}.

Fix $a \in Z_1$. Given the decomposition of Claim \ref{p:twist:st:decomposition}, one should expect additive maps $\alpha: Z_1 \to Z_1$ and $\beta, \gamma: Z_1 \to L$ such that:
\[u w a = w a + w \gamma(a) + \beta(a) + \alpha(a)\]
Bearing in mind that $(u w)^3 = 1$, one finds:
\begin{align}
u w u w \cdot a & = u (a + \gamma(a) + w \beta(a) + w \alpha(a))\nonumber\\
& = a + \gamma(a) + \d \gamma (a) + w \beta(a) + \d w \beta(a)\nonumber\\
& \quad + w \alpha(a) + w \gamma(\alpha(a)) + \beta(\alpha(a)) + \alpha(\alpha(a))\nonumber\\
 = (u w)^{-1}  a & = w u^{-1} a = w  a\label{e:twist}
\end{align}
Projecting onto $w \cdot Z_1$, this yields $\alpha(a) =  a$; projecting onto $w\cdot L$, one finds $\gamma(a) = - \beta(a)$.

Moving from $\beta_1 = \beta$ to $\beta_\lambda$ is exactly as in Claim \ref{p:quartic:st:beta} from the proof of Proposition \ref{p:quartic}

Additivity of $\beta_\lambda$ is obvious; we did not prove that $\beta_\lambda$ is an isomorphism, which will be a consequence of the next.
\end{proofclaim}

\begin{step}\label{p:twist:st:beta:values}
One has $\d_\lambda \beta_\lambda (a) = t_\lambda a$ and $\d_{\lambda^{-1}} w \beta_\lambda (a) = - a$.
For $b \in L$, one has $\d_\lambda wb = - \d_\lambda t_\lambda b$.
\end{step}
\begin{proofclaim}
Equation \eqref{e:twist} from Claim \ref{p:twist:st:beta} now rewrites as $\d w \beta(a) - \d\beta(a) + 2 a = 0$. Let $r = \d\beta(a) \in [U, L] \leq Z_1$, then compute again:
\begin{align*}
uw uw \beta(a) & = uw (w \beta(a) + r - 2a)\\
& = \beta(a) + r + w(r-2a) - w \beta(r - 2a) + \beta(r-2a) + (r-2a)\\
= w^{-1} u^{-1} \beta(a) & = w\beta(a) - w r
\end{align*}
so projecting onto $w \cdot Z_1$ we find $2r = 2a$; the characteristic not being $2$, this results in $a = r = \d\beta(a)$ and $\d w\beta(a) = - a$.
Incidently, this shows that the additive map $\beta: Z_1 \to L$ is injective, and therefore surjective as well. The inverse map is clearly $\d: L \to Z_1$.

Now for $\lambda \in \K$, one has $\d_\lambda \beta_\lambda (a) = t_\ell \d t_\ell^{-1} t_\ell \beta(t_\ell a) = t_\lambda a$ and $\d_{\lambda^{-1}} w \beta_\lambda(a) = t_\ell^{-1} \d t_\ell w t_\ell \beta (t_\ell a) = - a$.

Finally let $b \in L$; since $\beta_{\lambda^{-1}}: Z_1 \to L$ is a bijection, there is $a \in Z_1$ with $b = \beta_{\lambda^{-1}}(a)$. Hence:
\[
\d_\lambda w b = \d_\lambda w \beta_{\lambda^{-1}}(a) = - a = - t_\lambda t_\lambda^{-1} a = - t_\lambda \d_{\lambda^{-1}} \beta_{\lambda^{-1}} a = - \d_\lambda t_\lambda b
\qedhere\]
\end{proofclaim}

\begin{step}\label{p:twist:st:multiplicative}
In $\End(L)$, $\beta \d_\lambda + \beta \d_\mu = \beta \d_{\lambda+\mu}$ and $(\beta \d_\lambda) (\beta \d_\mu) = \beta \d_{\lambda\mu}$ for any $(\lambda, \mu)\in \K^2$.
\end{step}
\begin{proofclaim}
This will be tedious; the author strongly suspects that the second half of the argument could be made shorter with more culture on Jordan rings; incidently, it is not surprising to see Jordan rings appear in a cubic action (see the preprint \cite{GCubic} with the proviso that our setting smoothes many arguments).

Let $\psi_\lambda = \beta \d_\lambda: L \to L$ and $\psi: \K \to \End(L)$ map $\lambda$ to $\psi_\lambda$. In general one has $\d_{\lambda + \mu} = \d_\lambda + \d_\mu + \d_\lambda \d_\mu$; here, since $L \leq Z_2$, $\psi$ is additive. What we must now prove is that $\psi$ is multiplicative as well.

Fix $\lambda$ and $\mu \in \K^\times$. Then in view of Claims \ref{p:twist:st:beta} and \ref{p:twist:st:beta:values}:
\begin{align*}
u_\lambda u_\mu w a & = u_\lambda \left(w a - w t_\mu^{-1} \beta_\mu(a) + \beta_\mu(a) + t_\mu a\right)\\
& = w a - w t_\lambda^{-1} \beta_\lambda(a) + \beta_\lambda (a) + t_\lambda a\\
& \quad - w t_\mu^{-1} \beta_\mu (a) - \d_\lambda w t_\mu^{-1} \beta_\mu(a) + \beta_\mu(a) + \d_\lambda \beta_\mu (a) + t_\mu a\\
= u_{\lambda + \mu} w a & = w a - w t_{\lambda + \mu}^{-1} \beta_{\lambda + \mu}(a) + \beta_{\lambda + \mu}(a) + t_{\lambda + \mu} a
\end{align*}
so projecting on the various summands one finds:
\begin{align}
t_\lambda^{-1} \beta_\lambda (a) + t_\mu^{-1} \beta_\mu(a) & = t_{\lambda +\mu}^{-1}\beta_{\lambda +\mu}(a);\nonumber\\
\beta_\lambda(a) + \beta_\mu(a) & = \beta_{\lambda + \mu}(a);\nonumber\\
t_\lambda a - \d_\lambda w t_\mu^{-1} \beta_\mu(a) + \d_\lambda \beta_\mu(a) + t_\mu a & = t_{\lambda + \mu} a\label{p:twist:eq:prefundamental}
\end{align}
But we also know from Claim \ref{p:twist:st:beta:values} that $t_{\lambda + \mu} a = \d_{\lambda +\mu} \beta_{\lambda + \mu} (a)$; hence using $\im \beta_\lambda = L \leq Z_2$:
\begin{align*}
t_{\lambda + \mu} a & = \d_{\lambda +\mu} \beta_{\lambda + \mu} (a) = \d_\lambda \beta_\lambda (a) + \d_\lambda \beta_\mu(a) + \d_\mu \beta_\lambda(a) + \d_\mu \beta_\mu (a)\\
& = t_\lambda a + \d_\lambda \beta_\mu(a) + \d_\mu \beta_\lambda(a) + t_\mu a
\end{align*}
Putting this with Equation \eqref{p:twist:eq:prefundamental}, one gets $\d_\mu \beta_\lambda(a) = - \d_\lambda w t_\mu^{-1} \beta_\mu(a)$, which equals $\d_\lambda t_\lambda t_\mu^{-1} \beta_\mu(a)$ by Claim \ref{p:twist:st:beta:values}.
Hence:
\begin{equation}
\d_\mu \beta_\lambda(a) = \d_\lambda t_{\lambda\mu^{-1}} \beta_\mu(a)\label{p:twist:eq:fundamental}
\end{equation}

This was unpleasant but the rest of the argument goes slightly better, although we suspect there should be something shorter. We make a series of claims.
\begin{enumerate}
\item\label{p:twist:st:multiplicative:it:beta}
$t_\lambda = \d_\lambda \beta_\lambda$ (as functions on $Z_1$).

This is from Claim \ref{p:twist:st:beta:values}.
\item\label{p:twist:st:multiplicative:it:bd=tpsit}
If $\ell^2 = \lambda$ then $\beta_\lambda \d = t_\ell \psi_\lambda t_\ell$.

Recall from Claim \ref{p:twist:st:beta} that $t_\ell \beta = \beta_\lambda t_\ell^{-1}$. Therefore $t_\ell \psi_\lambda t_\ell = t_\ell \beta \d_\lambda t_\ell = \beta_\lambda t_\ell^{-1} \d_\lambda t_\ell = \beta_\lambda \d$.
\item\label{p:twist:st:multiplicative:it:cool}
If $\ell^2 = \lambda$ and $m^2 = \mu$ then $\psi_\mu t_\ell \psi_\lambda t_\ell = \psi_\lambda t_{\lambda m^{-1}} \psi_\mu t_m$.

This reduces to the previous claim and Equation \eqref{p:twist:eq:fundamental} composed with $\beta$ on the left and $\d$ on the right:
$\psi_\mu t_\ell \psi_\lambda t_\ell = \beta \d_\mu \beta_\lambda \d
 = \beta \d_\lambda t_{\lambda\mu^{-1}} \beta_\mu \d
 = \psi_\lambda t_{\lambda\mu^{-1}} t_m \psi_\mu t_m$.
\item
If $\ell^2 = \lambda$ then $t_\ell \psi_\lambda = \psi_\lambda t_\ell$.

Set $\ell = 1$ in the previous, and find $\psi_\mu = t_m^{-1} \psi_\mu t_m$.
\item
$\psi_\mu t_\lambda \psi_\lambda = \psi_\lambda t_\lambda \psi_\mu$.

With the last two items, $\psi_\mu \psi_\lambda t_\lambda = \psi_\lambda t_\lambda \psi_\mu$; now $\psi_\lambda$ and $t_\lambda$ commute.
\item\label{p:twist:st:multiplicative:it:Hualike}
$\psi_{\mu\lambda^2} = \psi_\lambda \psi_\mu \psi_\lambda$.

This is since $\psi_{\mu\lambda^2} = \beta \d_{\mu\lambda^2} = \beta t_\lambda \d_\mu t_\lambda^{-1}$, so with items \ref{p:twist:st:multiplicative:it:beta} (which applies since $\d_\mu(L) = Z_1$) and \ref{p:twist:st:multiplicative:it:bd=tpsit}, $\psi_{\mu\lambda^2} = \beta \d_\lambda \beta_\lambda \d_\mu t_\lambda^{-1} = \psi_\lambda \beta_\lambda \d \beta \d_\mu t_\lambda^{-1} = \psi_\lambda t_\ell \psi_\lambda t_\ell \psi_\mu t_\lambda^{-1}$; in view of the previous commutation properties, one gets $\psi_{\mu\lambda^2} = \psi_\lambda^2 t_\lambda \psi_\mu t_\lambda^{-1} = \psi_\lambda \psi_\mu t_\lambda \psi_\lambda t_\lambda^{-1} = \psi_\lambda \psi_\mu \psi_\lambda$.

Whether one can conclude at this early stage is not clear to us. Hua's famous theorem \cite{Hua} on (anti-)automorphisms of skewfields would require $\End(L)$, or at least a subring, to be a division ring (which is not obvious even taking $T$-minimality into account); exploiting the identity $\psi_{\lambda^2} = \psi_\lambda^2$ would require it to be commutative.

Perhaps there is an algebraic argument, which we replace by basic model-theory.
%
\item
For any polynomial $P \in \F_p[X]$, $\psi_{P(\lambda)} = P(\psi_\lambda)$.

First show $\psi_{\lambda^n} = \psi_\lambda^n$ by setting $\mu = \lambda^n$ in the previous, then use additivity of $\psi$.
\item
For any two $(\lambda, \mu) \in \overline{\F_p}$, $\psi_\lambda \psi_\mu = \psi_\mu \psi_\lambda$.

This is because there are $\nu \in \overline{\F_p}$ and polynomials $P, Q \in \F_p[X]$ with $\lambda = P(\nu)$ and $\mu = Q(\nu)$.
\item
For any two $(\lambda, \mu) \in \K$, $\psi_\lambda \psi_\mu = \psi_\mu \psi_\lambda$.

For $\lambda \in \overline{\F_p}$,  $C_\lambda = \{\mu \in \K: \psi_\lambda \psi_\mu = \psi_\mu \psi_\lambda\}$ is an infinite, definable field, whence equal to $\K$. So for $\lambda \in \K$, $C_\lambda$ is an infinite, definable field, whence equal to $\K$.
\item
$\psi$ is a ring homomorphism.

This follows from $\psi_\lambda \psi_\mu = \psi_\mu \psi_\lambda$ and $\psi_{\lambda^2} = \psi_\lambda^2$.
\end{enumerate}
This completes the proof of Claim \ref{p:twist:st:multiplicative}.
\end{proofclaim}

Fortunately the rest is now just painting from nature.

\begin{notationinproof}
We define an action of $\K$ on $V$ by letting, for $(\lambda, b) \in \K \times L$:
\[\left\{\begin{array}{rcl}
\lambda \cdot b & = & \beta \d_\lambda b\\
\lambda \cdot wb & = & w (\lambda \cdot b)\\
\lambda \cdot \d b & = & \d (\lambda \cdot b)\\
\lambda \cdot w \d b & = & w (\lambda \cdot \d b)
\end{array}
\right.\]
\end{notationinproof}

\begin{step}
This defines a $\K$-vector space structure on $V$ for which $G$ is linear.
\end{step}
\begin{proofclaim}
First bear in mind that since $\d$ realises a bijection $L \simeq Z_1$, and since $V = Z_1 \oplus L \oplus w \cdot L \oplus w \cdot Z_1$, this is well-defined. Additivity in the module element is obvious. Additivity and multiplicativity in $\lambda$ need only be proved on $L$, and are exactly Claim \ref{p:twist:st:multiplicative}.

It remains to prove that $G$ is linear. By construction, $w$ is. We turn to linearity of $u$, which amounts to that of $\d$. The latter is obvious on $Z_1$ and $L$. Now if $wb \in w\cdot L$ then by Claim \ref{p:twist:st:beta:values}, $\d w (\lambda \cdot b) = - \d (\lambda \cdot b) = - \lambda \cdot \d b = \lambda \cdot \d wb$ and $\d$ is also linear on $w\cdot L$.

Notice that $\beta: Z_1 \to L$ is linear. For if $a \in Z_1$, then there is $b \in L$ with $a = \d b$ or equivalently $b = \beta(a)$, so that:
\[
\beta (\lambda \cdot a) = \beta (\lambda \cdot \d b) = \beta (\d (\lambda \cdot b)) = \lambda \cdot b = \lambda \cdot \beta(a)
\]
Finally, for $a \in Z_1$:
\begin{align*}
\d (\lambda \cdot w a) & = \d w (\lambda \cdot a) = - w \beta (\lambda \cdot a) + \beta (\lambda \cdot a) + (\lambda \cdot a)\\
& = - w \lambda \cdot \beta (a) + \lambda \cdot \beta (a) + \lambda \cdot a\\
& = - \lambda \cdot w \beta (a) + \lambda \cdot \beta (a) + \lambda \cdot a = \lambda \cdot \d w a
\end{align*}

So far we proved linearity of $\<w, u\>$; now to linearity of $T$.
For $b \in L$ one has, thanks to Claim \ref{p:twist:st:multiplicative}:
\begin{align*}
t_\mu (\lambda \cdot b) & = \beta \d t_\mu (\lambda \cdot b) = \beta t_\mu \d_{\mu^{-2}} \beta \d_\lambda b =
\beta t_\mu \d \beta \d_{\mu^{-2}} \beta \d_\lambda b\\
& = \beta t_\mu \d \beta \d_{\lambda\mu^{-2}} b
= \beta t_\mu \d_{\lambda\mu^{-2}} b
= \beta \d_\lambda t_\mu b
= \lambda\cdot (t_\mu b)
\end{align*}
This is linearity of $T$ on $L$; linearity on $w \cdot L$ follows at once. Now for $a \in Z_1$, letting $b = \beta(a)$, one has $\lambda \cdot a = \d (\lambda \cdot b) = \d_\lambda b = \d_\lambda \beta (a)$, so:
\begin{align*}
t_\mu \lambda \cdot a & = t_\mu \d_\lambda \beta (a) = \d \beta \d_{\lambda\mu^2} t_\mu \beta(a)\\
& = \d \beta \d_\lambda \beta \d_{\mu^2} t_\mu \beta (a) = \d_\lambda \beta t_\mu \d \beta (a) = \lambda \cdot (t_\mu a)
\end{align*}
This is linearity of $T$ on $Z_1$; linearity on $w \cdot Z_1$ follows, proving global linearity of $\<w, u, T\> = G$.
\end{proofclaim}

By linearity, $G \simeq \PSL_2(\K)$ embeds definably into $\GL_4(\K)$;
Fact \ref{f:Poizat} we quoted in \S\ref{s:conjecture} guarantees definability of $G$ in the pure field $\K$ together with a finite number of automorphisms. This is certainly consistent with the desired identification but does not settle matters.

Another remarkable information is the use of the Borel-Tits positive answer \cite[Corollaire 10.4]{BTHomomorphismes} to Steinberg's \cite[Conjecture 1.2]{SRepresentations}.
We know at this stage that $V$ either is the rational module of Proposition \ref{p:quartic} (which we already ruled out) or a tensor product $\Nat G \otimes{}^\chi (\Nat G)$ for some field automorphism $\chi$. The problem is that we also announced definability of $\chi$: and for the moment, we see no better argument than retrieving it by a direct computation. \emph{There ought to be something more general here} and we shall return to the topic some day.


\begin{notationinproof}\label{n:chi}
Let $b_0 \in L\setminus\{0\}$ be fixed. For $\lambda \in \K$ let $\chi(\lambda)$ be the unique $\mu \in \K$ with $\d_\mu b_0 + \d_\lambda w b_0 = 0$.
\end{notationinproof}

\begin{step}\label{p:twist:st:chi}
The map $\chi$ is a well-defined, definable automorphism of $\K$ not depending on $b_0$; finally, $V \simeq {}^\chi (\Nat G) \otimes \Nat G$.
\end{step}
\begin{proofclaim}
First notice that the function $U \to Z_1$ mapping $u_\lambda$ to $\d_\lambda b_0$ is a definable bijection. So $\chi$ is a well-defined, definable function on $\K$. Additivity is obvious since $L \oplus w \cdot L \leq Z_2$.

Now remember from Claim \ref{p:twist:st:beta:values} that for $b \in L$, one has $\d_{\lambda} w b = - \d_\lambda t_\lambda b$,
so that:
\[
\chi(\lambda) \cdot b_0 = \beta \d_{\chi(\lambda)} b_0 = - \beta \d_\lambda w b_0 = \beta \d_\lambda t_\lambda b_0 = \lambda \cdot t_\lambda b_0
\]
In particular, by linearity of $T$:
\[
\chi(\lambda\mu) \cdot b_0 = (\lambda\mu)\cdot t_{\lambda\mu} b_0 = \lambda \cdot (\mu \cdot t_\lambda t_\mu b_0) = \lambda \cdot (t_\lambda (\mu \cdot t_\mu b_0)) = \chi(\lambda)\cdot (\chi(\mu) \cdot b_0)
\]
This proves multiplicativity of $\chi$, which is an automorphism of $\K$.

Incidently, if one were to take $b_1 = t \cdot b_0$ for some $t \in T$, one would find:
\begin{align*}
\beta\left(\d_{\chi(\lambda)} b_1 + \d_\lambda w b_1\right) & = \chi(\lambda) \cdot b_1 - \beta \d_\lambda t_\lambda b_1 = \chi(\lambda) \cdot t b_0 - \lambda \cdot t_\lambda t b_0\\
&  = t (\chi(\lambda) \cdot b_0 - \lambda \cdot t_\lambda b_0) = 0
\end{align*}
But $\beta$ is injective, so this means that the function $\chi$ does not depend on the precise choice of $b_0$ in $T \cdot b_0 = L\setminus\{0\}$.

It remains to see that we have a definable isomorphism $V \simeq {}^\chi(\Nat G) \otimes \Nat G$. The latter has basis $(e_1 \otimes e_1, e_1 \otimes e_2, e_2\otimes e_1, e_2 \otimes e_2)$ in the notations of Example \ref{ex:twist}; the reader should take $\varphi = \chi$ and $\psi = \Id$ there.
Let $a_0 = \d b_0 \in Z_1$, so that $b_0 = \beta(a_0)$; clearly $(a_0, b_0, w \cdot b_0, w \cdot a_0)$ forms a basis of $V$ as a $\K$-vector space.
Now consider the map:
\[
f:\left\{
\begin{array}{ccr}
a_0 & \mapsto & e_1 \otimes e_1\\
b_0 & \mapsto & e_1 \otimes e_2\\
w \cdot b_0 & \mapsto & - e_2 \otimes e_1\\
w \cdot a_0 & \mapsto & e_2 \otimes e_2
\end{array}\right.
\]
and extend linearly. We claim that $f$ is $G$-covariant. The action of $w$ and $u$ is obviously preserved. So it suffices to study the action of $T$. Remember that $t_\lambda b_0 = \frac{\chi(\lambda)}{\lambda} \cdot b_0$. Hence $t_\lambda a_0 = t_\lambda \d b_0 = \d_{\lambda^2} t_\lambda b_0 = \frac{\chi(\lambda)}{\lambda} \d \beta \d_{\lambda^2} b_0 = \lambda \chi(\lambda) a_0$. Therefore:
\[f(t_\lambda a_0) = \lambda \chi(\lambda) f(a_0) = \lambda \chi(\lambda) e_1\otimes e_1 = t_\lambda f(a_0)\]
whereas:
\[f(t_\lambda b_0) = \frac{\chi(\lambda)}{\lambda} f(b_0) = \frac{\chi(\lambda)}{\lambda} e_1\otimes e_2 = t_\lambda f(b_0)\]
Covariance on $w a_0$ and $w b_0$ follows.
\end{proofclaim}

This concludes the proof of Proposition \ref{p:twist} and the rank $4k$ analysis.
\end{proof}

\begin{remarks*}\
\begin{itemize}
\item
In order to convince the reader that despite our efforts matters are not perfectly clear, we do not know if one can classify cubic $\SL_2(\K)$-modules of finite Morley rank, i.e. those satisfying $\ell_U(V) = 3$.
\item
Several years ago the author had tried something without model theory: there are many obstacles.

First, the decomposition of Claim \ref{p:twist:st:decomposition} cannot be granted.
Second, it is not clear that injectivity of $\beta$ entails its surjectivity in Claim \ref{p:twist:st:beta:values}.
Also, but this may be due to our lack of algebraic culture, multiplicativity of $\psi$ in Claim \ref{p:twist:st:multiplicative} apparently holds only over the algebraic closure of the prime field (to be more precise, the argument works over any quadratically closed, algebraic extension of $\F_p$).
Last but not least, in order to ``untensor'' definably, one needs to construct $\chi$ as in Notation \ref{n:chi}, which requires $\lambda \mapsto \d_\lambda b_0$ to be a bijection $\K \simeq \Z_1$; the latter is not clear without rank arguments.

This is the reason why we finally prefered to work in finite Morley rank. One's first love is not easily forgotten.
\end{itemize}
\end{remarks*}

\bibliography{../../Timmesfeld-Variationen/English/Variationen,RMf}

\begin{thebibliography}{10}

\bibitem{BBGroups}
Ay{\c{s}}e Berkman and Alexandre Borovik.
\newblock Groups of finite {M}orley rank with a pseudoreflection action.
\newblock {\em J. Algebra}, 368:237--250, 2012.

\bibitem{BTHomomorphismes}
Armand Borel and Jacques Tits.
\newblock Homomorphismes ``abstraits'' de groupes alg\'ebriques simples.
\newblock {\em Ann. of Math. (2)}, 97:499--571, 1973.

\bibitem{BDRank3}
Alexandre Borovik and Adrien Deloro.
\newblock Rank $3$ {B}ingo.
\newblock {\em J. Symbolic Logic}, 2015.
\newblock To appear.

\bibitem{BNGroups}
Alexandre Borovik and Ali Nesin.
\newblock {\em Groups of finite {M}orley rank}, volume~26 of {\em Oxford Logic
  Guides}.
\newblock The Clarendon Press - Oxford University Press, New York, 1994.
\newblock Oxford Science Publications.

\bibitem{CDSmall}
Gregory Cherlin and Adrien Deloro.
\newblock Small representations of {$\SL_2$} in the finite {M}orley rank
  category.
\newblock {\em Journal of Symbolic Logic}, 77(3):919--933, 2012.

\bibitem{CPSTensor}
Edward Cline, Brian Parshall, and Leonard Scott.
\newblock On the tensor product theorem for algebraic groups.
\newblock {\em J. Algebra}, 63(1):264--267, 1980.

\bibitem{DActions}
Adrien Deloro.
\newblock Actions of groups of finite {M}orley rank on small abelian groups.
\newblock {\em Bull. Symb. Log.}, 15(1):70--90, 2009.

\bibitem{TV-II}
Adrien Deloro.
\newblock Symmetric powers of {N}at $\mathfrak{sl}_2(\mathbb{K})$.
\newblock {\em Communications in Algebra}, 2015.
\newblock To appear.

\bibitem{TV-III}
Adrien Deloro.
\newblock Symmetric powers of {N}at $\mathrm{SL}_2(\mathbb{K})$.
\newblock {\em Journal of Group Theory}, 2015.
\newblock To appear.

\bibitem{DJInvolutive}
Adrien Deloro and {\'E}ric Jaligot.
\newblock Involutive automorphisms of ${N}_\circ^\circ$-groups.
\newblock Submitted, 2015.

\bibitem{GCubic}
Matthias Gr\"uninger.
\newblock On cubic action of a rank one group.
\newblock Preprint. arXiv:1106.2310v2, 2011.

\bibitem{Hua}
Loo-Keng Hua.
\newblock On the automorphisms of a sfield.
\newblock {\em Proc. Nat. Acad. Sci. U. S. A.}, 35:386--389, 1949.

\bibitem{MMTPermutation}
Dugald Macpherson, Alex Mosley, and Katrin Tent.
\newblock Permutation groups in $o$-minimal structures.
\newblock 62(2):650--670, 2000.

\bibitem{PQuelques}
Bruno Poizat.
\newblock Quelques modestes remarques \`a propos d'une cons\'equence inattendue
  d'un r\'esultat surprenant de {M}onsieur {F}rank {O}laf {W}agner.
\newblock {\em J. Symbolic Logic}, 66(4):1637--1646, 2001.

\bibitem{SRepresentations}
Robert Steinberg.
\newblock Representations of algebraic groups.
\newblock {\em Nagoya Math. J.}, 22:33--56, 1963.

\bibitem{TAbstract}
Franz~Georg Timmesfeld.
\newblock {\em Abstract {R}oot {S}ubgroups and {S}imple {G}roups of
  {L}ie-type}, volume~95 of {\em Monographs in Mathematics}.
\newblock Birkh{\"a}user, Basel, 2001.

\bibitem{TEtude}
Jules Tindzogho~Ntsiri.
\newblock {\em {\'E}tude de quelques liens entre les groupes de rang de
  {M}orley fini et les groupes alg{\'e}briques lin{\'e}aires}.
\newblock PhD thesis, Universit{\'e} de Poitiers, Poitiers, 2013.

\bibitem{TStructure}
Jules Tindzogho~Ntsiri.
\newblock Structure of an {$SL_2$}-module of finite {M}orley rank.
\newblock Submitted, 2016.

\bibitem{WFields}
Frank Wagner.
\newblock Fields of finite {M}orley rank.
\newblock {\em J. Symbolic Logic}, 66(2):703--706, 2001.

\end{thebibliography}
\bibliographystyle{plain}

\end{document}